\documentclass[11pt]{amsart}
\usepackage[utf8]{inputenc}

\usepackage{amssymb,amsmath,amsfonts,amsthm,array,comment,mathrsfs,times,graphicx}

\makeindex
\usepackage{geometry}
\usepackage{longtable,tabularx}
\usepackage{amsmath}
\usepackage[all,cmtip]{xy}
\usepackage{tikz-cd}
\usepackage{url}
\usepackage[normalem]{ulem}
\usepackage[hidelinks]{hyperref}
\usepackage{scalerel}

\hypersetup{bookmarksdepth=2}

\usepackage{enumitem}
\setlist[enumerate]{label=(\roman*)}

\newtheorem{theorem}{Theorem}[section]
\newtheorem{coro}[theorem]{Corollary}
\newtheorem{lemm}[theorem]{Lemma}
\newtheorem{prop}[theorem]{Proposition}

\newtheorem{theo}[theorem]{Theorem}

\theoremstyle{definition}
\newtheorem{defi}[theorem]{Definition}
\newtheorem{rema}[theorem]{Remark}

\newtheorem{exam}[theorem]{Example}
\newtheorem{introthm}{Theorem}

\newcommand{\Z}{\mathbb{Z}}
\newcommand{\calO}{\mathcal{O}}
\newcommand{\p}{\mathfrak{p}}
\newcommand{\Q}{\mathbb{Q}}
\newcommand{\tr}{\mathrm{t}}
\newcommand{\K}{\mathcal{K}}
\newcommand{\R}{\mathcal{R}}
\newcommand{\N}{\mathbb{N}}
\newcommand{\Hom}{\mathrm{Hom}}

\newcommand{\rk}{\mathrm{rk}}
\newcommand{\F}{\mathbb{F}}
\newcommand{\mm}{\mathfrak{m}}

\newcommand{\EE}{\mathcal{E}}
\newcommand{\q}{\mathfrak{q}}

\newcommand{\coker}{\mathrm{coker}}
\newcommand{\CC}{\mathrm{cap}}

\def\mf{\mathfrak}
\def\mb{\mathbb}
\def\mc{\mathcal}

\newcommand{\hide}[1]{}

\title{Local Minkowski units in non-abelian extensions with cyclic Sylow $p$-subgroups}

\author{Wan Lee}
\address{   Department of Mathematics, Changwon National University, 20 Changwondaehakro, Uichang-gu, Changwon-si, Gyeongsangnam-do, 51140, South Korea}
\email{wannim@gmail.com}

\author{Donghyeok Lim}
\address{Korea National University of Education, 
Department of Mathematics Education, Cheongju, Korea}
\email{donghyeokklim@gmail.com}

\date{}

  \makeatletter
\def\@settitle{\begin{center}
 \normalfont \LARGE\bfseries \@title
\end{center}}

\def\section{\@startsection
  {section}{1}{\z@}
  {-2.5ex \@plus -1ex \@minus -.2ex}
  {1.3ex \@plus .2ex}
  {\normalfont\large\bfseries}}
\makeatother

\begin{document}

\begin{abstract}
We establish a criterion for the existence of a local Minkowski unit at $p$ that applies to all Galois extensions with Galois group isomorphic to the direct product of a non-$p$-group and a cyclic $p$-group. As applications, we construct non-abelian extensions admitting a local Minkowski unit at $p$ under various ramification conditions and analyze the Iwasawa module structure of units in $\mathbb{Z}_p$-extensions of number fields. We also extend our study to the case where the group of $p$-power roots of unity is nontrivial.
\end{abstract}

\maketitle

\tableofcontents

\section{Introduction}
 
Let $K/k$ be a finite Galois extension of number fields with Galois group $G=G_{K/k}$. Throughout this work, we assume that $K/k$ is unramified at all infinite places. Understanding the Galois module structure of unit group $U_K:=\mc O_K^\times$ of the ring $\calO_K$ of integers is one of the classical and enduring problems in algebraic number theory. By the classical Dirichlet–Herbrand theorem (cf. \cite[Thm. I.3.7]{GrasCFT}), its rational representation is completely determined by the isomorphism
\begin{equation*}
    \Q \otimes U_K \cong  \Q[G_{}] / (\tr_{G_{}})   \oplus \Q[G_{}]^{r_k},
\end{equation*}
where $r_k$ denotes the $\Z$-rank of $U_k$ and $\tr_{G_{}}=\sum_g g \in \Z[G_{}]$. In contrast, the $\Z[G_{}]$-structure of $U_K$ remains poorly understood, since its study involves both the arithmetic of $K/k$ and the integral representation theory of $G_{}$.

The existence of a \textit{strong Minkowski unit} when $r_k=0$ (i.e. $k=\Q$ or $k$ is imaginary quadratic) has been studied in several papers (cf. \cite{All, BouaLim, Brumer69, Burns2, Duval1, JT24, Marko05, Mars2, MazurUllom} and the references cited in \cite[Rem. 8.3]{JT24}). This question is equivalent to the following $\Z[G_{}]$-module isomorphism
\begin{equation*}
E_K := U_K/\mu_K \cong \Z[G_{}]/(\tr_{G_{}}),
\end{equation*}
 where $\mu_K$ denotes the group of roots of unity in $K$. 
The problem, despite its simple formulation, is far from trivial. We say that $K/k$ has a \textit{local Minkowski unit} at $p$ if the above isomorphism holds after applying $\Z_p \otimes_{} \bullet$, that is 
\begin{equation*}
\EE_K := \Z_p \otimes E_K \cong \Z_p[G_{}]/(\tr_{G_{}}) .
\end{equation*}
A major obstacle is the notoriously difficult classification of indecomposable $\Z_p[G_{}]$-lattices. Consequently, results on local Minkowski units for non-abelian Galois extensions have largely been confined to groups whose Sylow $p$-subgroup is cyclic of order $p$. 
By contrast, the existence of local Minkowski units for general abelian extensions is considerably better understood, thanks to the work of Fr\"{o}hlich  \cite{Frohlich2, Frohlich3} and Burns \cite{Burns1, Burns2}.

More recently, the Galois module structure of unit groups has attracted considerable attention for arbitrary base fields $k$. Since the $\Z_p$-rank of $\EE_K$ can be arbitrarily large compared to $[K:k]$, the Jordan--Zassenhaus theorem yields no finiteness restriction on its Galois module structure. Indeed, Kataoka and Ozaki \cite{OzakiKataoka} recently showed that every module arising in integral representation theory occurs, up to a free $\Z_p[G_{}]$-summand, as the Galois module of units of a finite Galois $p$-extension $K/k$.

Therefore, it is natural to study how the arithmetic of $K/k$ restricts the Krull--Schmidt decomposition of $\EE_K$. A basic invariant in this context is the $\Z_p$-rank $c(G_{},\EE_K)$ of its non-projective $\Z_p[G_{}]$-direct summands. The works \cite{Burns3, KumonLim} provide upper bounds for $c(G_{},\EE_K)$ for arbitrary finite Galois groups in terms of the $p$-rank of a certain $S$-class group of $K(\zeta_p)$ and the number of ramified primes in $K/k$. Consequently, if the $\Z_p$-rank of $\EE_K$ is sufficiently large relative to $c(G_{},\EE_K)$, then its Krull--Schmidt decomposition contains free $\Z_p[G_{}]$-direct summands. Such free direct summands are also of independent interest, since they play an important role in the study of tamely ramified pro-$p$-extensions of number fields. In particular, the $\F_p[G_{}]$-module structure of $\F_p\otimes_{}U_K$ plays an important role in the inverse Galois problem for the $p$-class field tower \cite{Ozaki3,HMR24} and, more recently, in the study of the deficiency of the Galois group of the $p$-class field towers \cite{HMR26}.

Further developments have been made in the case of $p$-extensions. In \cite{BLM23}, Yakovlev theory was applied to a certain family of pro-$p$ Galois extensions, including Galois extensions of local type (cf. \cite[Def. 10.9.6]{NSW}), yielding strong constraints on $c(G,\EE_K)$ as well as partial information on the non-projective component. More recently, using a different approach based on the Ritter--Weiss Tate sequences \cite{RW}, \cite{Burns4} obtained bounds for $c(G,\EE_K)$ that refine those of \cite{Burns3,KumonLim} while remaining valid for general Galois $p$-extensions. It is therefore natural to seek analogous results for more general finite groups.

\vskip 7pt

Against this background, we consider the Galois module structure of units for a new family of finite groups. More precisely, we show that when $G_{}=H \times P$, where $P$ is a cyclic $p$-group and $H$ (possibly non-abelian) has order prime to $p$, there is a simple criterion that sometimes completely determines the Krull-Schmidt decomposition of $\EE_K$.

\begin{introthm}\label{theo-main}
Let $K/k$ be a finite Galois extension with  Galois group $G_{K/k} =G  = P \times H$, where $P$ is a cyclic $p$-group and $H$ is a group whose order $|H|$ is coprime to $p$. Assume that no infinite place of $k$ is ramified in $K$. Then, we have the $\Z_p[G_{}]$-isomorphism
\begin{equation}\label{isoM}
\tag{M}
    \EE_K \cong \Z_p[G_{}]/(\tr_{G_{}}) \oplus \Z_p[G_{}]^{r_k}
\end{equation}
if and only if we have $H^1(P,E_K) \cong \Z/|P|\Z.$
\end{introthm}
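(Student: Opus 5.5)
The plan is to prove the nontrivial direction, since the forward one is a direct computation. Write $\EE:=\EE_K$, $|P|=p^n$, and let $\sigma$ generate $P$. Because $|H|$ is prime to $p$, we have $H^1(P,E_K)\cong H^1(P,\EE)$.

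\emph{Forward direction.} If (M) holds, then $H^1(P,-)$ kills the free summands, since $\Z_p[G]$ is free over $\Z_p[P]$. It remains to compute $H^1(P,\Z_p[G]/(\tr_G))$. As a $\Z_p[P]$-module, $\Z_p[G]\cong\Z_p[P]^{|H|}$ and $\tr_G=\tr_H\tr_P$. The sequence $0\to \Z_p\tr_G\to\Z_p[G]\to \Z_p[G]/(\tr_G)\to 0$ gives
\[
H^1\bigl(P,\Z_p[G]/(\tr_G)\bigr)\cong H^2(P,\Z_p)\cong \Z/p^n.
\]

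\emph{Converse: decomposition over $H$.} Use the idempotents $e_\chi$ of $\Z_p[H]$ attached to the $\Q_p$-irreducible (or $\Z_p$-block) characters of $H$; these exist because $p\nmid|H|$. Then $\Z_p[G]=\bigoplus_\chi e_\chi\Z_p[H]\otimes\Z_p[P]$, where each $e_\chi\Z_p[H]\cong M_{m_\chi}(\calO_\chi)$ with $\calO_\chi$ an unramified extension of $\Z_p$ (Schur indices are trivial over unramified local rings). By Morita equivalence, a $\Z_p[G]$-lattice is the same as a family of $\calO_\chi[P]$-lattices $M_\chi$. The unit lattice $\EE$ has known rational character (Dirichlet–Herbrand): $\Q_p\otimes\EE\cong \Q_p[G]/(\tr_G)\oplus\Q_p[G]^{r_k}$. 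For the trivial character $\chi_0$, the component is $\EE^H=\EE_{K^H}$, the units of the cyclic $p$-extension $K^H/k$. For $\chi\ne\chi_0$, the rational component is $V_\chi\otimes\Q_p[P]$ with multiplicity $(r_k+1)$ times $\dim$. Moreover $H^1(P,\EE)=\bigoplus_\chi e_\chi H^1(P,\EE)$.

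\emph{Converse: the cyclic-$p$-group lattice criterion.} The key input is the lattice-theoretic criterion for $\calO[P]$ with $P$ cyclic and $\calO$ unramified over $\Z_p$ (presumably established earlier in the paper, or provable by induction on $n$ via Heller/Reiner's classification along the tower $\Z_p[P]\to\Z_p[P/P^{p^{i}}]$). It says: an $\calO[P]$-lattice $M$ with $\Q_p M\cong \Q_p\otimes\calO[P]^{a}\oplus(\text{augmentation-quotient part})$ is of the expected shape exactly when its Tate cohomology is as small as possible. Concretely:
\begin{itemize}
\item a lattice $M$ with $\Q\otimes M$ free of rank $a$ is free iff $\hat H^0(P,M)=0=H^1(P,M)$, or more precisely iff $H^1(P', M)=0$ for all $P'\le P$;
\item $M\cong \calO[P]/(\tr_P)\oplus\calO[P]^{a}$ iff $H^1(P,M)\cong\calO/p^n$ with the right rank conditions.
\end{itemize}
The main obstacle is that cohomology of $P$ alone does not detect freeness for $n>1$; one needs the cohomology of all subgroups $P'$. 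To get this, I would apply the hypothesis together with the Herbrand quotient. For cyclic $P'$ acting on units, $h(P',E_K)$ is given by the classical unit index formula (no infinite ramification), namely $h(P',E_K)=1/[K:K^{P'}]$ on the trivial component and $1$ on the others. Combined with $H^1(P,E_K)\cong\Z/p^n$, this forces $\hat H^0(P,\EE)=0$. Inflation–restriction and the cyclic structure then give, for each subgroup $P'$, that $H^1(P',\EE)\cong\Z/|P'|$ concentrated in the $\chi_0$-part, and $\hat H^0(P',\EE)=0$.

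\emph{Converse: assembling.} With this cohomological information, for $\chi\neq\chi_0$ the component $e_\chi\EE$ has trivial cohomology for every subgroup of $P$. Since $H$-components are cohomologically trivial in the $p$-part, this makes $e_\chi\EE$ projective, hence free over $\calO_\chi[P]$ by its rational character (Swan: projective lattices over a local ring are determined rationally). For $\chi_0$, the lattice $\EE^H$ over $\Z_p[P]$ has $H^1(P',-)$ cyclic of order $|P'|$ and $\hat H^0=0$. I would then show $\EE^H\cong\Z_p[P]/(\tr_P)\oplus\Z_p[P]^{r_k}$ by the standard argument: choose a class generating $H^1(P,\EE^H)$, realize it by an extension $0\to\EE^H\to X\to\Z_p\to0$, show $X$ is cohomologically trivial and hence free, and recover $\EE^H$ as $X$ modulo a copy of $\Z_p$, which gives (M) after gluing the components.
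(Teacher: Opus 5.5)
Your converse takes a genuinely different route from the paper's and can be completed, but one step as written is unsupported. After the Herbrand quotient you assert that inflation--restriction and cyclicity give $H^1(P',\EE)\cong\Z/|P'|\Z$ and $\widehat{H}^0(P',\EE)=0$ for every subgroup $P'\le P$. Both your projectivity claim for $e_\chi\EE$ ($\chi\ne\chi_0$) and the cohomological triviality of $X$ rest on this.

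Nothing you have established at that point gives it. Inflation--restriction would require control of $\EE^{P'}$ and of its $P/P'$-cohomology, which you do not have. Moreover, vanishing of $\widehat{H}^0(P,-)$ alone does not pass to subgroups: for $P=\Z/p^2\Z$, the lattice $\Z_p[\zeta_p]$ has $\widehat{H}^0(P,-)=0$ but $\widehat{H}^0(P',-)\neq 0$.

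The repair is that the ``main obstacle'' you identify is not real. For a $p$-group, $\widehat{H}^q(P,A)=\widehat{H}^{q+1}(P,A)=0$ for a single $q$ already makes $A$ cohomologically trivial (the Nakayama--Tate criterion, e.g.\ Serre, \emph{Local Fields}, Ch.~IX). For a lattice, $A/pA$ is then $\F_p[P]$-free, so $p$ acts surjectively on every $\widehat{H}^r(P',A)$. A cohomologically trivial $\Z_p$-lattice is projective.

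So you only need $P$-level data, which you do have:
\begin{itemize}
\item The componentwise Herbrand quotients ($p^{-n}$ on $\EE^H$, $1$ elsewhere) together with $|H^1(P,\EE)|=p^n$ force $\widehat{H}^0(P,e_\chi\EE)=0$ for all $\chi$, $H^1(P,e_\chi\EE)=0$ for $\chi\neq\chi_0$, and $H^1(P,\EE^H)\cong\Z/p^n\Z$.
\item Hence each $e_\chi\EE$ with $\chi\ne\chi_0$ is projective.
\item For $X$, the connecting map $\widehat{H}^0(P,\Z_p)\to H^1(P,\EE^H)$ sends $1$ to your chosen generator, so it is an isomorphism. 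This gives $\widehat{H}^0(P,X)=H^1(P,X)=0$, hence $X\cong\Z_p[P]^{r_k+1}$.
\end{itemize}

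Also fix the last step. $\EE^H$ is not ``$X$ modulo a copy of $\Z_p$'' but the kernel of a surjection $X\to\Z_p$. After a change of basis this kernel is $I_P\oplus\Z_p[P]^{r_k}$, and $I_P=(\sigma-1)\Z_p[P]\cong\Z_p[P]/(\tr_P)$ because $P$ is cyclic. A minor point: $H^1(P,E_K)\cong H^1(P,\EE_K)$ holds because $H^1(P,E_K)$ is a finite $p$-group and $\Z_p$ is flat, not because $p\nmid|H|$.

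With these repairs your proof is complete, and it differs from the paper's. The paper uses neither Herbrand quotients nor cohomological triviality. It proves the lattice criterion of Proposition~\ref{prop-repre}, valid for any $p$-group $P$: a lattice with the right rational character has the form \eqref{isoM} iff $\dim_{\F_p}M/\mm M=(n+1)|H|$. The proof goes via Jones's theorem on indecomposables for $H\times P$ and a Nakayama/rank count. The arithmetic then enters only through $|(E_K/E_K^p)^P|=p^{(r_k+1)|H|-1}\cdot|H^1(P,E_K)[p]|$ and $|A^P|=|A_P|$ for finite $A$ with $P$ cyclic.

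The paper's route needs only that $H^1(P,E_K)$ be nontrivial cyclic, and it isolates a reusable representation-theoretic statement (used again in Lemma~\ref{lemma-p^2}). Your route avoids Jones's theorem and Krull--Schmidt entirely. It uses only that $|H|$ is invertible, the unit index formula, the Nakayama--Tate criterion and the rational determination of projective lattices. It relies on the cyclicity of $P$ twice: in the Herbrand quotient and in $I_P\cong\Z_p[P]/(\tr_P)$.
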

\noindent Theorem~\ref{theo-main} shows that the isomorphism~\eqref{isoM} can be verified purely at the cohomological level. In particular, it extends to the above non-abelian groups a theorem of Burns~\cite[Thm. 3]{Burns2} that the existence of local Minkowski units for abelian number fields is governed by cohomology together with the class number ratios
appearing in Brauer--Kuroda type formulas.
(cf.\ Remarks~\ref{rema-factor} and~\ref{rema-Burns}).

\vskip 7pt

We next use Theorem~\ref{theo-main} to study non-abelian examples in which Isomorphism~\eqref{isoM} holds. Although representation-theoretic criteria are available, they usually require additional arithmetic information on ramification or class groups (cf.~\cite{BLM23}, \cite{Burns2}), and determining such arithmetic data explicitly is in general difficult.

The cohomological criterion in Theorem~\ref{theo-main} makes it possible to apply genus theory (cf. \eqref{sequence genus theory}), leading to the following result.

Throughout this paper, for any finite extension $M/L$ of number fields, let $\CC_{M/L} = \ker (Cl_L \to Cl_M)$ denote the kernel of the natural map (capitulation map), and let $h_M =|Cl_M|$ be the class number of $M$.

\begin{introthm}\label{theo-smallramification}
Let $K/k$ be a finite Galois extension with  Galois group $G_{K/k} =H \times P$, where no infinite places are ramified. Set $F=K^P$, the fixed field of $K$ by $P$. Assume
\begin{equation}\label{condC}
\tag{C} H^1(P, U_K) \cong H^1(P, E_K).
\end{equation}
\begin{enumerate}
    \item 
    If $K/F$ is unramified at all primes, Isomorphism \eqref{isoM} holds for $K/k$ if and only if we have $\CC_{K/F} \cong \Z/|P|\Z$.
    \item  
    Suppose that exactly one prime of $ F$ ramifies in $K/F$ and    $p\nmid h_k$. Then Isomorphism \eqref{isoM} holds for $K/k$ if and only if $\CC_{K/F}$ is trivial.
   \end{enumerate}
\end{introthm}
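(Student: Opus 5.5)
The plan is to reduce both parts to Theorem~\ref{theo-main}. Under \eqref{condC}, that theorem says that \eqref{isoM} holds if and only if $H^1(P,U_K)\cong\Z/|P|\Z$. So everything comes down to computing $H^1(P,U_K)$ by genus theory for the cyclic extension $K/F$ with group $P$.

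\textbf{Step 1: the basic exact sequence.} Take $P$-cohomology of $0\to U_K\to K^\times\to P_K\to 0$, where $P_K$ is the group of principal ideals. Hilbert 90 gives $H^1(P,K^\times)=0$, so
\[
H^1(P,U_K)\cong P_K^P/P_F .
\]
Inside $I_K^P/I_F\cong\bigoplus_{v\ \mathrm{ramified}}\Z/e_v\Z$ we have the subgroup $(P_K\cap I_F)/P_F=\CC_{K/F}$. This yields the exact sequence
\[
0\to \CC_{K/F}\to H^1(P,U_K)\to \textstyle\bigoplus_{v}\Z/e_v\Z ,
\]
which I take to be the sequence \eqref{sequence genus theory}.

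\textbf{Part (i).} If $K/F$ is unramified, the right-hand term vanishes. Hence $H^1(P,E_K)\cong H^1(P,U_K)\cong\CC_{K/F}$, and (i) follows from Theorem~\ref{theo-main}.

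\textbf{Part (ii): forcing total ramification.} Here I first show that $K/F$ is totally ramified at its unique ramified prime $v$. Let $I\le P$ be the inertia group and $K'=K^{I}$. Then $K'/F$ is an unramified cyclic $p$-extension, Galois over $k$ with group $H\times P/I$, and $H$ acts trivially on $\mathrm{Gal}(K'/F)$. By class field theory $K'$ corresponds to a quotient of $Cl_F\otimes\Z_p$ on which $H$ acts trivially, so it factors through the coinvariants $(Cl_F\otimes\Z_p)_H$. Since $p\nmid|H|$, restriction and norm identify $(Cl_F\otimes\Z_p)_H$ with $Cl_k\otimes\Z_p$, because $\mathrm{cor}\circ\mathrm{res}=|H|$ is invertible. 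This group is $0$ because $p\nmid h_k$, so $K'=F$ and $e_v=|P|$.

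\textbf{Part (ii): lower bound and the forward direction.} I bound $H^1$ from below using the Herbrand quotient of units in a cyclic extension unramified at infinity:
\[
|\hat H^0(P,U_K)|/|H^1(P,U_K)| = 1/|P| ,
\]
so $|H^1(P,U_K)|=|P|\cdot|\hat H^0(P,U_K)|\ge|P|$. If $\CC_{K/F}=1$, the sequence embeds $H^1(P,U_K)$ into $\Z/|P|\Z$. The order bound then forces $H^1(P,U_K)\cong\Z/|P|\Z$, and Theorem~\ref{theo-main} gives \eqref{isoM}.

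\textbf{Part (ii): the converse (main obstacle).} Assume $H^1(P,U_K)\cong\Z/|P|\Z$; I must show $\CC_{K/F}=1$. The Herbrand computation gives $\hat H^0(P,U_K)=1$, so every unit of $F$ is a norm from $K$. I would show the map $H^1(P,U_K)\to\Z/|P|\Z$ is surjective, i.e., that the ramified prime $\mathfrak P$ lies in $P_K^P I_F$. The order count $|H^1|=|\CC_{K/F}|\cdot|\mathrm{image}|$ then forces $\CC_{K/F}=1$.

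To get surjectivity, I would use the ambiguous class number formula
\[
|Cl_K^P| = h_F\,|P|\big/\big(|P|\,[U_F:U_F\cap N K^\times]\big)=h_F .
\]
I would combine it with the strong ambiguous class sequence and the $H$-equivariance of everything, working in $p$-parts and $H$-isotypic components. The aim is to show that the class of $\mathfrak P$ is accounted for by ideals from $F$ up to principal ideals; the hypothesis $p\nmid h_k$ should again kill the $H$-trivial component, where the class of $\mathfrak P$ naturally lives since $v$ is the unique ramified prime and is therefore $H$-stable. This step needs the most care.
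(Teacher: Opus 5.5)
Part (i), your coinvariant argument that $v$ is totally ramified in $K/F$, and the implication $\CC_{K/F}=0\Rightarrow$~\eqref{isoM} in (ii) via the bound $|H^1(P,U_K)|\ge|P|$ are all correct. The coinvariant argument differs from the paper's, which looks at the inertia field of $\mathfrak l$ in $K^H/k$, but it is fine.

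The gap is the converse in (ii). You never prove that $H^1(P,U_K)\to I_K^P/I_F\cong\Z/|P|\Z$ is surjective, and the tools you name cannot do it as stated.

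\emph{The ambiguous class number formula for $K/F$ carries no information here.} Under \eqref{isoM} and \eqref{condC} the genus sequence already reads $0\to\CC_{K/F}\to\Z/|P|\Z\to\Z/|P|\Z\to\coker(Cl_F\to Cl_K^P)\to 0$. This forces $|\CC_{K/F}|=|\coker(Cl_F\to Cl_K^P)|$, and $|Cl_K^P|=h_F$ holds for every common value of these two orders.

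\emph{The hypothesis $p\nmid h_k$ does not ``again'' kill the relevant $H$-trivial component.} It kills $Cl_F[p^\infty]^H\cong Cl_k[p^\infty]$. But the class of $\mathfrak{P}$ lives in $Cl_K[p^\infty]^H\cong Cl_{K^H}[p^\infty]$, which is not controlled by $h_k$ directly.

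The missing ingredient is $p\nmid h_{K^H}$. It holds because $K^H/k$ is a cyclic $p$-extension in which the prime $\mathfrak l$ below $v$ is totally ramified and no other place ramifies. Apply the ambiguous class number formula to $K^H/k$, not to $K/F$: it gives $|Cl_{K^H}[p^\infty]^P|\le|Cl_k[p^\infty]|=1$. Since $P$ is a $p$-group, this forces $Cl_{K^H}[p^\infty]=0$.

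With this in hand, surjectivity follows in three steps:
\begin{enumerate}
\item The unique prime of $K^H$ above $\mathfrak l$ has a principal power with exponent $t_1$ prime to $p$.
\item That prime extends to $\mathfrak{P}^{e'}$ in $K$ with $e'\mid|H|$. Hence $\mathfrak{P}^t\in P_K^P$ for some $t$ prime to $p$.
\item The image of $\mathfrak{P}^t$ generates $I_K^P/I_F$, so the map from $H^1(P,U_K)\cong P_K^P/P_F$ is onto.
\end{enumerate}
This yields the short exact sequence $0\to\CC_{K/F}\to H^1(P,U_K)\to\Z/|P|\Z\to0$. Both directions of (ii) then follow at once from \eqref{condC} and Theorem~\ref{theo-main}, which is exactly the paper's argument.

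In your $H$-isotypic language the same point reads as follows. Take $H$-invariants of the $p$-part of the genus sequence. The group $I_K^P/I_F$ has trivial $H$-action, and the cokernel term becomes a quotient of $Cl_K[p^\infty]^G\subseteq Cl_K[p^\infty]^H\cong Cl_{K^H}[p^\infty]$. That group vanishes only because of $p\nmid h_{K^H}$, not because of $p\nmid h_k$ alone.
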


Theorem \ref{theo-smallramification} (i) also reveals a rather unexpected rigidity phenomenon for capitulation kernels of $p$-class groups in unramified cyclic $p$-extensions, especially in view of the generally complicated structure of $p$-class field towers. Under suitable hypotheses, the capitulation kernels at intermediate levels are determined by the capitulation kernel from the base field to the top field (Lemma \ref{assump-F12}). By Artin reciprocity, this translates into a result on transfer kernels in general FAb pro-$p$-groups (Corollary \ref{prop-tk}).

\vskip 7pt

We further extend these results to allow the group $\mu_{K,p}$ of $p$-power roots of unity of $K$ to be non-trivial. In that case, Condition~\eqref{condC} may fail, since~$\mu_{K,p}$ is not $P$-acyclic in general. For simplicity, many previous works assume $\zeta_p \notin K$ (cf.~\cite{BLM23, KumonLim, Belliard}), where~$\zeta_p$ denotes a primitive $p$-th root of unity. In \S\ref{sec-non-cohotri}, we investigate situations in which Condition~\eqref{condC} still holds even when $\mu_{K,p}$ is not $P$-acyclic (cf.~Theorem~\ref{theo-removingC}, Proposition~\ref{prop-C}). As a consequence, when $p=2$ and $K$ is totally real, Condition~\eqref{condC} follows from the assumptions of Theorem~\ref{theo-smallramification} even if \(\mu_{K,p}\) is not \(P\)-acyclic (Corollary~\ref{theoB-ext}).

\vskip 7pt

Since the setting of Theorem~\ref{theo-smallramification} arises naturally in $\Z_p$-extensions of number fields, we also apply it to Iwasawa modules arising from unit groups.

\begin{introthm}\label{theo-Iwasawa}
Let $F/k$ be a finite Galois extension of number fields in which no infinite places ramify.
Let $p$ be a prime with $p\nmid h_F \cdot [F:k]$.
Let $k_{\infty}/k$ be a $\Z_p$-extension such that $Fk_{\infty}/F$ is ramified at a unique prime. For each $n$, let $k_n$ denote the $n$-th layer of $k_{\infty}/k$, and set $F_n=Fk_n$.
Suppose that Condition \eqref{condC} holds for $F_n/F$ for each $n$. 
Then we have  isomorphisms
\begin{align*}
 & \varprojlim_{n \in \N} \, \EE_{F_n} \cong \Lambda[G_{F/k}]^{r_k+1} \  \text{ and } \\
 &   \varprojlim_{n \in \N} \, (\Z_p \otimes U_{F_n}) \cong \Big( \, \varprojlim_{n \in \N} \, \mu_{F_n, p} \, \Big )   \oplus\Lambda[G_{F/k}]^{r_k+1}
\end{align*}
of $\Lambda[G_{F/k}]$-modules, where the inverse limits are taken with respect to the norm maps and $\Lambda:=\Z_p[[G_{F_{\infty}/F}]]$ denotes the Iwasawa algebra of the Galois group $G_{F_{\infty}/F}$ of $F_{\infty}=Fk_{\infty}$ over $F$.
\end{introthm}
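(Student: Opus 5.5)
The plan is to apply Theorem~\ref{theo-smallramification}(ii) at each finite layer and then pass to the limit. Set $P_n = G_{F_n/F}\cong \Z/p^n\Z$. Since $p\nmid[F:k]$, we have $G_{F_n/k}\cong G_{F/k}\times P_n$ with $|G_{F/k}|$ prime to $p$. By hypothesis, $F_n/F$ is ramified at exactly one prime of $F$. Also $p\nmid h_F$, hence $p\nmid h_k$: the composite of the extension and norm maps $Cl_k\to Cl_F\to Cl_k$ is multiplication by $[F:k]$, which is prime to $p$, so $Cl_k[p^\infty]$ injects into $Cl_F[p^\infty]=0$. We are also given Condition~\eqref{condC} for each $F_n/F$. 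Finally, $\CC_{F_n/F}$ is a subgroup of $Cl_F$ killed by $[F_n:F]=p^n$ (again via the norm), so it is trivial. Theorem~\ref{theo-smallramification}(ii), applied with base $k$, top field $F_n$ and $H=G_{F/k}$, then gives
\[
\EE_{F_n}\cong \Z_p[G_n]/(\tr_{G_n})\oplus \Z_p[G_n]^{r_k}, \qquad G_n:=G_{F_n/k},
\]
for every $n$.

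The next step is to make these isomorphisms compatible with the norm maps. The plan is to work with the modules $\Z_p[G_n]/(\tr_{G_n})\oplus\Z_p[G_n]^{r_k}$ rather than with chosen isomorphisms, using a compactness argument. For each $n$, let $X_n$ be the set of $(r_k+1)$-tuples in $\EE_{F_n}$ that generate it as a $\Z_p[G_n]$-module and whose annihilator structure realises the isomorphism above. Each $X_n$ is nonempty and compact in the $p$-adic topology. I also need the norm $N_{n+1,n}\colon \EE_{F_{n+1}}\to\EE_{F_n}$ to map $X_{n+1}$ into $X_n$. This should follow because $N_{n+1,n}$ corresponds to the natural projection $\Z_p[G_{n+1}]\to\Z_p[G_n]$ under the isomorphisms, which I would check via $H^1$ and Nakayama over the local ring $\Z_p[P_{n+1}]$ after splitting by idempotents of $\Z_p[H]$. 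The inverse limit of nonempty compact sets is then nonempty, and a compatible system of generators gives a surjection $\Lambda[G_{F/k}]^{r_k+1}\to\varprojlim\EE_{F_n}$.

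Injectivity would come from a rank count: the rank of $\EE_{F_n}$ over $\Z_p$ is $(r_k+1)|G_n|-1$, so the kernel at finite level is the rank-one lattice $\Z_p\tr_{G_n}$ in the first factor. These kernels die in the limit because the transition maps send $\tr_{G_{n+1}}$ to $p\,\tr_{G_n}$, so their inverse limit is zero. Hence $\varprojlim\EE_{F_n}\cong\Lambda[G_{F/k}]^{r_k+1}$.

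For the second isomorphism, take the inverse limit of the exact sequences $0\to\mu_{F_n,p}\to\Z_p\otimes U_{F_n}\to\EE_{F_n}\to0$. This stays exact because all terms are finitely generated $\Z_p$-modules, which are compact, so $\varprojlim^1$ vanishes. The resulting sequence splits because $\Lambda[G_{F/k}]^{r_k+1}$ is projective over $\Lambda[G_{F/k}]$. I expect the main obstacle to be the norm-compatibility step, namely showing that norms carry generating systems of the prescribed shape at level $n+1$ to systems of the same shape at level $n$. The compactness argument needs this, and it is where I expect the local structure of $\Z_p[P_n]$ to be used seriously.
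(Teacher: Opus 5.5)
Your proof is correct and follows the paper's outline. The finite-level isomorphism \eqref{isoM} comes from Theorem~\ref{theo-smallramification}(ii), with $\CC_{F_n/F}=0$ because $p\nmid h_F$; the paper routes this through Proposition~\ref{prop-GVM}. The limit is then computed from the sequences $0\to\Z_p\to\Z_p[G_n]^{r_k+1}\to\EE_{F_n}\to0$, whose kernels $\Z_p\tr_{G_n}$ have multiplication-by-$p$ transitions, and the second isomorphism comes from projectivity.

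The one real difference is how the norm-compatible generating tuples are produced. The paper lifts upward. It fixes a good tuple at level $n$, takes arbitrary norm preimages, and shows by Nakayama that they generate. This works because the induced map $\EE_{F_{n+1}}/I_{n+1}\EE_{F_{n+1}}\to\EE_{F_n}/I_n\EE_{F_n}$ is a surjection between two copies of $\F_p[H]^{r_k+1}$, hence bijective. Trace-zero and a rank count then finish the step. You push downward and use compactness instead.

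Both routes rest on the same input, and it is exactly the step you flagged as the main obstacle. It needs neither idempotents nor the local structure of $\Z_p[P_{n+1}]$; it is Lemma~\ref{remark subfield Minkowski}(i). Put $C=G_{F_{n+1}/F_n}$. Since $\widehat H^0(C,-)$ vanishes on $\Z_p[G_{n+1}]/(\tr_{G_{n+1}})\oplus\Z_p[G_{n+1}]^{r_k}$, the isomorphism \eqref{isoM} at level $n+1$ gives
$N(\EE_{F_{n+1}})=\tr_C\EE_{F_{n+1}}\subseteq\EE_{F_n}\subseteq\EE_{F_{n+1}}^C=\tr_C\EE_{F_{n+1}}$,
so the norm is onto. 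Consequently the norms of a good tuple generate $\EE_{F_n}$. The first entry is killed by $\tr_{G_n}$, since $\tr_{G_n}N(y)=\tr_{G_{n+1}}y=0$ inside $\EE_{F_{n+1}}$. Equality of $\Z_p$-ranks then makes the induced surjection from $\Z_p[G_n]/(\tr_{G_n})\oplus\Z_p[G_n]^{r_k}$ an isomorphism.

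Each route buys something different:
\begin{itemize}
\item Your compactness version needs only this downward statement. You must still justify that $X_n$ is closed, which you only asserted: trace-zero is a closed condition, and generation is detected modulo $p\EE_{F_n}$, so it is clopen.
\item The paper's lifting avoids topology on sets of tuples and proves more: every good tuple at every level extends to a compatible system.
\end{itemize}

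A minor point: ``$F_n/F$ is ramified at exactly one prime'' is not literally the hypothesis. It follows because $p\nmid h_F$ rules out an unramified nontrivial layer.
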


\noindent

Theorem \ref{theo-Iwasawa} complement previous works \cite{Belliard}, \cite{TMO13}, \cite[Ch.~11, \S 3]{NSW} on Iwasawa modules of units by covering certain non-abelian cases and a broader class of $\Z_p$-extensions (Remark~\ref{remark unit Iwasawa module history}). Furthermore, our work on the non-$P$-acyclic case also applies to various $\Z_p$-extensions of number fields containing $\zeta_p$ (cf. Corollary \ref{coro-IwaModul}).

\vskip 7pt

Finally, we apply our preceding results to construct new examples of local Minkowski units at~$p$. In general, one expects the Galois module structure of units to become more complicated as the number of ramified primes and the size of the $p$-class group increase. In particular, if~$K/k$ is a $p$-extension and~$r_k$ is sufficiently large compared to~$[K:k]$, the number of ramified primes, and the size of the $p$-class group, then~$\EE_K$ admits a free~$\Z_p[G_{K/k}]$-direct summand (cf.~\cite{Burns1, KumonLim, Burns4, BLM23}). 
On the other hand, when~$r_k=0$, the situation is more subtle, and local Minkowski units usually fail to exist for various reasons. This suggests that there is no uniform relationship between the size of the $p$-class group, ramification, and the existence of local Minkowski units. This is further illustrated by the following theorem proved in \S \ref{sec-manyram}, which shows that local Minkowski units at~$p$ may nevertheless exist even in the presence of large $p$-class groups and large ramification.

\begin{introthm}\label{theo-D}
   Let $F$ be a totally real cyclic number field and   $p$   an odd prime number.
   Assume  $p\nmid h_F\cdot [F:\Q]$
   and  $F \cap \Q(\zeta_p)=\Q$. 
   Then, for any given $r\ge 0$ and  $n\ge n_1 \ge n_2 \ge \cdots \ge n_r\ge 1  $, there exist infinitely many  cyclic extensions $L/\Q$ of degree $p^{n}$ such that:
   \begin{itemize}
   \item
      $Cl_K[p^\infty] \cong Cl_L[p^\infty] \cong \prod_{i=1}^{r}\Z/p^{n_i}\Z\,$, where  $K=LF$.
      \item
       $K$ admits a local Minkowski unit at $p$.
     \item
     $K/F$ is ramified at exactly $r+1$  primes.
           \end{itemize}
\end{introthm}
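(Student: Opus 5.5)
We will build $L$ inside a compositum of cyclic fields of prime conductor and reduce everything to Theorem~\ref{theo-smallramification}(ii) together with Theorem~\ref{theo-main}. The plan is as follows.

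\emph{Choice of primes.} Using Chebotarev in $F(\zeta_{p^{N}})$ for large $N$, choose $r+1$ primes $\ell_0,\ell_1,\dots,\ell_r$ with $\ell_i\equiv 1 \pmod{p^{n}}$. Impose splitting conditions in $F$, and conditions on the power residue symbols $\ell_i$ modulo $\ell_j$, so that the $p$-parts behave as described below. Since $F\cap\Q(\zeta_p)=\Q$ and $p\nmid[F:\Q]$, these conditions are compatible and infinitely many such tuples exist.

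\emph{Construction of $L$.} Let $L\subset\Q(\zeta_{\ell_0\ell_1\cdots\ell_r})$ be a cyclic field of degree $p^n$ in which every $\ell_i$ ramifies. Choose the ramification indices so that $e_{\ell_0}=p^{n}$ (total ramification) and $e_{\ell_i}=p^{n_i}$ for $i\geq 1$. Genus theory for the cyclic extension $L/\Q$ gives
\[
|Cl_L[p^\infty]|\ \geq\ \prod_{i} e_{\ell_i}\big/[L:\Q],
\]
with $p$-rank $r$. The residue-symbol conditions should force the ambiguous class group to equal the full $p$-class group, which yields $Cl_L[p^\infty]\cong\prod_i\Z/p^{n_i}\Z$. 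Since $p\nmid [F:\Q]$, the $p$-class group of $K=LF$ decomposes into eigenspaces under $G_{F/\Q}$. If the $\ell_i$ are chosen so that the non-trivial $G_{F/\Q}$-characters see no ambient ramification, for instance $\ell_i$ totally split in $F$ (with a twist argument), then $Cl_K[p^\infty]\cong Cl_L[p^\infty]$. In $K/F$ every prime above the $\ell_i$ ramifies; if each $\ell_i$ is inert or has a unique prime in $F$, which is a further Chebotarev condition compatible with cyclicity of $F$, then exactly $r+1$ primes ramify.

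\emph{Local Minkowski unit.} Theorem~\ref{theo-smallramification}(ii) needs a unique ramified prime, so we cannot apply it to $K/\Q$ directly. Instead, pass to $K/F'$, where $F'=F L'$ and $L'\subset L$ is the subfield through which the $\ell_i$ for $i\geq 1$ have already ramified. Alternatively, and more robustly, verify the criterion of Theorem~\ref{theo-main} by computing $H^1(P,E_K)$. Because $p$ is odd and $K$ is totally real with $\zeta_p\notin K$, condition~\eqref{condC} holds automatically, since $\mu_{K,p}=1$. Then $H^1(P,U_K)$ sits in the genus-theory sequence~\eqref{sequence genus theory} relating it to the capitulation kernel $\CC_{K/F}$ and to the ramification indices. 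With $r_F=[F:\Q]-1$, we then need $H^1(P,E_K)\cong\Z/p^n\Z$. The exact sequence contributes $\prod_i\Z/e_{\ell_i}$ modulo the image of the global units of $F$ and $\CC_{K/F}$. Since $p\nmid h_F$, we have $\CC_{K/F}=0$, so $H^1(P,E_K)$ becomes the cokernel of the unit map $E_F\to\bigoplus_i I_{\ell_i}$ into the inertia groups.

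\emph{Main obstacle.} The key difficulty is arranging this cokernel to be cyclic of order exactly $p^n$. That requires the units of $F$ (including $-1$) to map onto a subgroup of $\bigoplus_i I_{\ell_i}$ of order $\prod_i p^{n_i}$, complementary to a cyclic factor. First I would pin down the local norm-residue conditions on fundamental units of $F$ at each $\ell_i$, namely that $\epsilon$ be a $p^{n_i}$-th power but not more modulo $\ell_i$, realized as Chebotarev conditions in $F(\zeta_{p^N},E_F^{1/p^N})$. Kummer independence makes these realizable because $F\cap\Q(\zeta_p)=\Q$. Reconciling these conditions with those needed for the class group computation is the delicate step.
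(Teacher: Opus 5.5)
\textbf{There is a genuine gap.} Your central step misidentifies $H^1(P,E_K)$, and the rest of the plan is built on that error.

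With $p\nmid h_F$ one has $\CC_{K/F}[p^\infty]=0$. The sequence \eqref{sequence genus theory} then makes $H^1(P,U_K)$ a \emph{subgroup} of $\bigoplus_v\Z/e_v\Z$: it is the kernel of $\Omega:(I_K^P/P_F)[p^\infty]\to Cl_K^P[p^\infty]$, i.e.\ the ambiguous ideals that become principal in $K$. It is not the cokernel of $E_F\to\bigoplus_v I_v$. By Chevalley's ambiguous class number formula, that cokernel has order $p^n\,|Cl_K^P[p^\infty]|$. By contrast, $|H^1(P,U_K)|=p^n\,[U_F:N_{K/F}U_K]$ by Proposition~\ref{unit principal genus theorem}, since $H^2(P,U_K)\cong U_F/N_{K/F}U_K$.

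So your ``main obstacle'' aims at the wrong target, and reaching it would be fatal for $r\ge 1$. Suppose $E_F$ mapped onto a subgroup of order $p^{n_1+\cdots+n_r}$ in $\bigoplus_v I_v$.
\begin{itemize}
\item Chevalley's formula would give $Cl_K^P[p^\infty]=0$, hence $Cl_K[p^\infty]=0$.
\item Since $[U_F:N_{K/F}U_K]\ge[U_F:U_F\cap N_{K/F}K^\times]=p^{n_1+\cdots+n_r}$, you would get $|H^1(P,E_K)|\ge p^{n+n_1+\cdots+n_r}>p^n$. By Theorem~\ref{theo-main} there would then be \emph{no} local Minkowski unit.
\end{itemize}
What is needed is the opposite. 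You need $H^2(P,U_K)=0$, i.e.\ every unit of $F$ is a norm of a unit of $K$ (trivial image in the inertia groups). The excess $p^{n_1+\cdots+n_r}$ in $\bigoplus_v\Z/e_v\Z$ must be carried by ambiguous classes that remain non-principal.

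The set-up has two further problems.
\begin{itemize}
\item Taking the $\ell_i$ totally split in $F$ would give $(r+1)[F:\Q]$ ramified primes in $K/F$. It would also put ramification into every $G_{F/\Q}$-eigenspace, the opposite of what you want. You need each $\ell_i$ inert in $F$.
\item The equality $Cl_L[p^\infty]=Cl_L^P[p^\infty]\cong\prod\Z/p^{n_i}\Z$ is only asserted.
\end{itemize}

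The missing ingredient is an explicit upper bound on $\ker\Omega$ coming from the genus field. Write $b_1=n$, $b_{i+1}=n_i$, $s=r+1$.
\begin{itemize}
\item The paper builds $L=L_s$ together with $M=M_1\cdots M_s$, where $M_i\subset\Q(\zeta_{q_i})$ has degree $p^{b_i}$. Then $M/L$ is unramified and $G_{M/L}\cong\prod_{i=1}^r\Z/p^{n_i}\Z$.
\item It chooses $q_i$ inert in $FL_{i-1}$ and split completely in $\Q(\zeta_{p^{b_i}},\sqrt[p^{b_i}]{q_1},\dots,\sqrt[p^{b_i}]{q_{i-1}})$.
\item This makes the Frobenius of $q_{1,L}$ in $G_{M/L}$ trivial, while those of $q_{2,L},\dots,q_{s,L}$ are independent generators.
\item Hence $\Theta=\phi\circ\psi\circ\bar N_{K/L}\circ\Omega$ on $(I_K^P/P_F)[p^\infty]\cong\prod_{i=1}^s\Z/p^{b_i}\Z$ is surjective with kernel $\Z/p^n\Z$, so $|\ker\Omega|\le p^n$.
\item The lower bound $|H^1(P,U_K)|\ge p^n$ then forces $H^1(P,U_K)\cong\Z/p^n\Z$.
\item It follows that $H^2(P,U_K)=0$, $\coker\Omega=0$, and $Cl_K^P[p^\infty]\cong G_{M/L}$. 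This gives both the local Minkowski unit and the class group isomorphisms.
\end{itemize}
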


We conclude in \S\ref{sec-numexam} by presenting numerical examples of non-abelian number fields admitting local Minkowski units. All computations were carried out using PARI/GP~\cite{PARI2}. 

\bigskip

Throughout the paper,  we write $G_{M/L}$ for the Galois group of  a Galois extension $M/L$.
For a finite group $G$, we write $\tr_G:=\sum_{g\in G}g$ for its trace element.

 \subsection*{Acknowledgements}
Many of the results in this work were obtained through discussions with Zakariae Bouazzaoui. We are deeply grateful to him for his generous support and for permitting us to include these ideas in the present work. We greatly appreciate the valuable discussions with David Burns and Christian Maire. We are grateful to Andreas Nickel for pointing out a typo in an earlier version of this work. 
The first author is grateful to Yukako Kezuka for   earlier discussions about the elliptic case of the Leopoldt conjecture.
The second author would like to thank Zouhair Boughadi and Dominik Bullach for helpful comments, and Jilali Assim for his interest in this work. Part of this work was carried out during the second author's visit to the Institute for Advanced Studies in Mathematics (IASM) at Harbin Institute of Technology. 
The second author is grateful to IASM for its hospitality and support during his visit and to Oussama Hamza and Heon Lee for making his stay pleasant.

\subsection*{Funding}
The first author was supported by the National Research Foundation of Korea (NRF) grants funded by the Korea government (MSIT) (RS-2021-NR061985 and RS-2024-00451678) and by  Basic Science Research Program through the National Research Foundation of Korea (NRF) funded by the Ministry of Education (RS-2025-25435653).
The second author was supported by the National Research
Foundation of Korea (NRF) grants No. RS-2024-00462910.

\section{Preliminaries}

\subsection{Integral representations of a direct product}
In this subsection, we review basic notions in the theory of integral
representations of finite groups and recall a theorem of Jones that will play an important role. Throughout this subsection, we fix a finite group $G$ and a Dedekind domain $R$, with quotient field $K$.

\begin{defi}
A finitely generated $R[G]$-module that is torsion-free over $R$ is called an $R[G]$-lattice. An $R[G]$-lattice $M$ is called
\begin{itemize}
\item irreducible if $M$ does not contain a proper sublattice with smaller $R$-rank, 
\item indecomposable if $M$ is not a direct sum of two proper $R[G]$-sublattices.
\end{itemize}
\end{defi}

\begin{rema}\label{remark representation}
\begin{enumerate}
\item Every irreducible lattice is indecomposable, but the converse does not in general hold. When $R[G]$ is semisimple, the converse holds.

\item If $R$ is a discrete valuation ring whose residue field has characteristic coprime to $|G|$, then an $R[G]$-lattice $M$ is uniquely determined  by the $K[G]$-structure of $K \otimes_R M$ (cf. \cite[\S 15.5]{Serre}). 
\end{enumerate}
\end{rema}

\begin{exam}
The ring~$R[P]$ is a local ring when~$P$ is a finite $p$-group and~$R$ is a local ring whose residue field has characteristic~$p$. By Nakayama’s lemma, both~$R[P]$ and~$R[P]/(\tr_P)$ are indecomposable as~$R[P]$-lattices.
\end{exam}

The classification of indecomposable $R[G]$-lattices is known only for a very limited class of groups (cf. \cite[\S 15]{Reinersurvey}). Beyond these cases, the general classification problem is widely regarded as intractable (cf.~\cite[p.~241]{Duval1} for the case $G \cong (\Z/p\Z)^2$). In particular, a theorem of Heller and Reiner shows that unless the Sylow $p$-subgroup of $G$ is isomorphic to $\Z/p^i\Z$ with
$i\leq 2$, there exist infinitely many nonisomorphic indecomposable
$\Z_p[G]$-lattices.

In \cite{Jones}, Jones studied indecomposable $R_{\mathfrak p}[G]$-lattices, where $R_\mathfrak p$ denotes the localization of $R$ at a maximal ideal $\mf p$  and $G$ is a direct product $G_1\times G_2$. We use the following special case of his theorem.

\begin{theorem}(\cite[Thm.1]{Jones}) \label{Jones} 
Let $G=G_1 \times G_2$ be the direct product of two groups of coprime orders, and suppose that $|G_1|$ is prime to a prime $p$. Let $R$ be the ring of integers of a finite extension of $\Q_p$ that is a splitting field for $G_1$. Then every indecomposable $R[G]$-lattice is (isomorphic to) the outer tensor product of an irreducible $R[G_{1}]$-lattice with an indecomposable $R[G_{2}]$-lattice.
\end{theorem}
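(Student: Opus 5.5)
The plan is to reduce to Morita theory by showing that $R[G_1]$ is a product of full matrix rings over $R$. Once that is known, $R[G]=R[G_1]\otimes_R R[G_2]$ becomes a product of matrix rings over $R[G_2]$. Indecomposable lattices over such a ring then correspond bijectively to indecomposable $R[G_2]$-lattices, and the correspondence is exactly the outer tensor product with the irreducible $R[G_1]$-lattice of the relevant block.

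\emph{Step 1: the structure of $R[G_1]$.} Since $K$ is a splitting field for $G_1$ and $\mathrm{char}\,K=0$, Wedderburn gives $K[G_1]\cong\prod_{i=1}^s M_{n_i}(K)$. Because $|G_1|\in R^\times$, the order $R[G_1]$ is a maximal $R$-order in $K[G_1]$. One way to see this is that the discriminant of the trace form on the basis $G_1$ is $\pm|G_1|^{|G_1|}$, a unit, and any order with unit discriminant is maximal. A maximal order in a product of simple algebras is the product of maximal orders in the factors. Since $R$ is a complete DVR, every maximal order in $M_{n_i}(K)$ is conjugate to $M_{n_i}(R)$; equivalently, it is $\mathrm{End}_R(L)$ for some full lattice $L\subset K^{n_i}$, and $L$ is free. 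Hence
\[
R[G_1]\cong\prod_{i=1}^s M_{n_i}(R).
\]
For each $i$, let $V_i=R^{n_i}$ be the column module of the $i$-th factor. Every irreducible $R[G_1]$-lattice is isomorphic to some $V_i$: it lies in a single block, and there $M_{n_i}(R)$ is Morita equivalent to $R$, whose only rank-one lattice is $R$.

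\emph{Step 2: the structure of $R[G]$.} Tensoring over $R$ gives
\[
R[G]\cong R[G_1]\otimes_R R[G_2]\cong\prod_{i=1}^s M_{n_i}\bigl(R[G_2]\bigr).
\]
Let $\varepsilon_i$ be the corresponding central idempotents. Any $R[G]$-lattice $M$ splits as $M=\bigoplus_i \varepsilon_i M$, so an indecomposable $M$ equals $\varepsilon_i M$ for a single $i$, and is then an indecomposable $M_{n_i}(R[G_2])$-lattice.

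\emph{Step 3: Morita equivalence.} Let $e_{11}$ be the matrix unit in $M_{n_i}(R[G_2])$. The functor $M\mapsto N:=e_{11}M$ is an equivalence from $M_{n_i}(R[G_2])$-modules to $R[G_2]$-modules, with quasi-inverse $N\mapsto R^{n_i}\otimes_R N$. It preserves $R$-torsion-freeness and finite generation, so it restricts to lattices, and it preserves indecomposability because it is an equivalence of additive categories. Under the identifications of Steps 1 and 2, the module $R^{n_i}\otimes_R N$ carries the action of $G_1$ on the first factor and of $G_2$ on the second; that is, it is the outer tensor product $V_i\boxtimes N$. Hence every indecomposable $R[G]$-lattice is isomorphic to $V_i\boxtimes N$ with $V_i$ irreducible and $N$ an indecomposable $R[G_2]$-lattice, which is the statement.

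The main obstacle I expect is the integral Wedderburn decomposition in Step 1. The key point is to justify maximality of $R[G_1]$ via the unit discriminant, and then to use that maximal orders in $M_n(K)$ over a complete DVR are of the form $M_n(R)$. The latter can be cited from the theory of maximal orders. Alternatively, one can lift the matrix units of $k[G_1]\cong\prod M_{n_i}(k)$, which is semisimple because $p\nmid|G_1|$, to $R[G_1]$ by idempotent lifting over the complete ring $R$; for this, the splitting assumption on $K$ is used to guarantee that the residue algebra is also split. The remaining steps are formal. Coprimality of $|G_1|$ and $|G_2|$ is used only to ensure $p\nmid|G_1|$; no hypothesis on $G_2$ is needed.
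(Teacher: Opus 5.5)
Your proof is correct. The paper does not prove this statement: it quotes it as a special case of \cite[Thm.~1]{Jones} and uses it as a black box. Your argument is therefore a self-contained route rather than a reconstruction of anything in the text.

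Your reduction has two steps. First, the integral Wedderburn decomposition $R[G_1]\cong\prod_i M_{n_i}(R)$, obtained from maximality of $R[G_1]$ via the unit discriminant and the description of maximal orders in $M_{n_i}(K)$. Second, Morita theory for $M_{n_i}(R[G_2])$. Together these prove more than the quoted statement: for each block, $N\mapsto V_i\otimes_R N$ is an equivalence between $R[G_2]$-lattices and the $R[G]$-lattices lying in that block.

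This gives immediately the converse recorded in Remark~\ref{remark indecomposable tensor}: $\mathfrak{N}\otimes_R\mathfrak{M}$ is indecomposable whenever $\mathfrak{M}$ is, and $\mathfrak{M}$ is recovered up to isomorphism from $\mathfrak{N}\otimes_R\mathfrak{M}$. The paper instead extracts this from the quoted theorem together with Krull--Schmidt, somewhat tersely. Your approach also shows that neither the completeness of $R$ nor the coprimality of $|G_1|$ and $|G_2|$ is used.

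One small correction to your closing remark: coprimality is not needed even to get $p\nmid|G_1|$, since that is a separate hypothesis in the statement. It plays no role in your argument at all.
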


\begin{rema}\label{remark indecomposable tensor}
Let $R$ and $G$ be as in Theorem \ref{Jones}. Let $\mathfrak{N}$ be an irreducible $R[G_1]$-lattice and $\mathfrak{M}$ an indecomposable $R[G_2]$-lattice. Then $\mathfrak{N} \otimes_R \mathfrak{M}$ is an indecomposable $R[G]$-lattice. Suppose not, by Theorem \ref{Jones}, the Krull-Schmidt decomposition of $\mathfrak{N} \otimes_R \mathfrak{M}$ contains a summand of the form $\mathfrak{N} \otimes_R \mathfrak{M}'$ for some indecomposable $R[G_2]$-lattice $\mathfrak{M}'$. 
By the Krull-Schmidt theorem, we must then have $\mathfrak{M} \cong \mathfrak{M}'$.
\end{rema}

\subsection{Galois cohomology of unit group}\label{subsec-GC}

We recall preliminary results in Galois cohomology. Throughout \S \ref{subsec-GC} and \ref{subsec-descent}, we fix a Galois extension $M/L$ of number fields with the Galois group $G$. We write $P_M$ and $I_M$ for the group of principal fractional ideals of $M$ and fractional ideals of $M$, respectively.

\begin{lemm}\label{lemma Iwasawa theorem}(cf. \cite{Iwasawa})
There exists an isomorphism $H^1(G,U_M) \cong P_M^{G}/P_L.$ Hence, $H^1(G,U_M)$ identifies with the kernel of the natural map $I_M^{G}/P_L \longrightarrow Cl_M.$
\end{lemm}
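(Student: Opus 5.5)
Write $G$ for the Galois group of $M/L$, as in \S\ref{subsec-GC}. The plan is to compute $H^1(G,U_M)$ from the exact sequence
\[
1 \longrightarrow U_M \longrightarrow M^\times \longrightarrow P_M \longrightarrow 1,
\]
where the second map is $x \mapsto x\calO_M$. Taking $G$-cohomology gives the long exact sequence
\[
1 \to U_L \to L^\times \to P_M^{G} \to H^1(G,U_M) \to H^1(G,M^\times).
\]
By Hilbert's Theorem 90, $H^1(G,M^\times)=0$. The image of $L^\times$ in $P_M^G$ is the group of principal ideals generated by elements of $L$, which is $P_L$ viewed inside $I_M$ via extension of ideals. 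Extension of ideals is injective on fractional ideals, so we may identify $P_L$ with its image. Exactness then gives the first claim, $H^1(G,U_M)\cong P_M^G/P_L$. Explicitly, the connecting map sends $x\calO_M \in P_M^G$ to the cocycle $\sigma \mapsto \sigma(x)/x \in U_M$.

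For the second claim, consider the natural map $I_M^G/P_L \to Cl_M$ induced by $I_M^G \subset I_M \to I_M/P_M$. It is well defined because $P_L \subset P_M$. Its kernel is $(I_M^G\cap P_M)/P_L$. Since $I_M^G \cap P_M = P_M^G$, this kernel equals $P_M^G/P_L$, which by the first part is $H^1(G,U_M)$.

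The only point needing care is the identification of the image of $L^\times$ with $P_L$. This reduces to the injectivity of $I_L \to I_M$, which follows from unique factorization of ideals in the Dedekind domains $\calO_L$ and $\calO_M$. Everything else is formal, so I expect no real obstacle.
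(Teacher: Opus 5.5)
Your proof is correct, and it is the standard argument behind this statement: the paper gives no proof of its own and only cites Iwasawa. The sequence $1\to U_M\to M^\times\to P_M\to 1$ together with Hilbert 90 is also exactly what the paper uses in proving Lemma \ref{lemma Brauer group}, and your deduction that the kernel of $I_M^G/P_L\to Cl_M$ is $(I_M^G\cap P_M)/P_L=P_M^G/P_L$ is the intended second step.
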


We recall the well-known unit principal genus theorem.

\begin{prop}\label{unit principal genus theorem}
Suppose that  $M/L$ is a cyclic extension  unramified at all infinite places. Then one has $|H^2(G, U_M)| \cdot [M:L] = |H^1(G, U_M)|$.
\end{prop}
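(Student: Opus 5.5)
The proof will use the Herbrand quotient of $U_M$ for a cyclic group, combined with its local–global decomposition. Since $G$ is cyclic and $U_M$ is finitely generated, the Tate cohomology groups $\hat H^0(G,U_M)$ and $H^1(G,U_M)$ are finite, and $H^2(G,U_M)\cong \hat H^0(G,U_M)$ by periodicity. The claim $|H^2(G,U_M)|\cdot[M:L]=|H^1(G,U_M)|$ is therefore the same as saying that the Herbrand quotient is
\[
h(G,U_M)=\frac{|\hat H^0(G,U_M)|}{|H^1(G,U_M)|}=\frac{1}{[M:L]} .
\]
So the task reduces to computing $h(G,U_M)$.

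The computation goes through Dirichlet's unit theorem. Let $S_\infty$ be the set of infinite places of $M$. The logarithmic embedding gives a $G$-equivariant map from $U_M$ to the augmentation-zero part of $\R[S_\infty]$. Its kernel is $\mu_M$, which is finite, and its image is a full lattice in that hyperplane. The Herbrand quotient is trivial on finite modules and multiplicative in short exact sequences, so $h(G,U_M)=h(G,\Lambda)$, where $\Lambda$ is any $G$-stable lattice that is rationally isomorphic to $U_M$. By the Dirichlet–Herbrand description, $\Q\otimes U_M\cong \Q\otimes I_{S_\infty}$, where $I_{S_\infty}$ is the kernel of the augmentation map $\Z[S_\infty]\to\Z$. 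For cyclic groups the Herbrand quotient of a lattice depends only on the rational representation. This is the standard lemma, which I would quote rather than reprove; it is the one genuinely nontrivial input.

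From the exact sequence $0\to I_{S_\infty}\to \Z[S_\infty]\to\Z\to0$ we get
\[
h(G,U_M)=h(G,\Z[S_\infty])/h(G,\Z)=h(G,\Z[S_\infty])/[M:L],
\]
using $\hat H^0(G,\Z)=\Z/[M:L]\Z$ and $H^1(G,\Z)=0$. The permutation module $\Z[S_\infty]$ decomposes as $\bigoplus_{v}\Z[G/G_w]$, the sum running over the infinite places $v$ of $L$, with $w$ a place of $M$ above $v$ and $G_w$ its decomposition group. By Shapiro's lemma, $h(G,\Z[G/G_w])=h(G_w,\Z)=|G_w|$. Here the hypothesis that $M/L$ is unramified at all infinite places gives $|G_w|=1$ for every $v$. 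Hence $h(G,\Z[S_\infty])=1$, so $h(G,U_M)=1/[M:L]$, which is the desired identity.

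The main obstacle is justifying that the Herbrand quotient is a rational invariant of lattices, i.e. that $h(G,\Lambda)=h(G,\Lambda')$ whenever $\Q\otimes\Lambda\cong\Q\otimes\Lambda'$. The standard argument has two steps. First, it is an invariant over $\R$: by Krull–Schmidt and Noether–Deuring, rationally isomorphic lattices can be embedded into one another with finite index. Second, multiplicativity together with triviality on finite modules then gives equality of the quotients. Alternatively, the whole computation can be taken directly from Chevalley's formula in standard class field theory references.
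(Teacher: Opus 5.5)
Your proof is correct. The paper gives no proof of this statement; it only recalls it as the well-known unit principal genus theorem. Your Herbrand-quotient computation is the standard derivation of
\[
h(G,U_M)=\frac{1}{[M:L]}\prod_{v\mid\infty}[M_w:L_v],
\]
specialized to the case with no ramification at infinity. The hypothesis enters exactly where you use it: it forces $G_w=1$ for every infinite place $w$.

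One point in your last paragraph should be tidied. Krull--Schmidt plays no role. If $\Q\otimes\Lambda\cong\Q\otimes\Lambda'$ as $\Q[G]$-modules, multiplying such an isomorphism by a suitable integer already gives an injection $\Lambda\hookrightarrow\Lambda'$ with finite cokernel. Noether--Deuring is needed only to pass from the real isomorphism given by the logarithmic embedding to a rational one, and the Dirichlet--Herbrand isomorphism $\Q\otimes U_M\cong\Q\otimes I_{S_\infty}$ that you cite already supplies this.
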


\begin{lemm}\label{lemma tensor cohomology}(cf. \cite[Lem.~2.1]{LeeSeo})
Let $A$ be a $G$-module.
Then we have 
\begin{enumerate}
    \item 
$H^i(G, \Z_p \otimes A) \cong \Z_p \otimes H^i(G, A)$ for  all $i\ge 0$;  
    \item 
    $\widehat{H}^i(G, \Z_p \otimes A) \cong \Z_p \otimes \widehat{H}^i(G, A)$ for all  $i$, where $\widehat{H}^{i}(G, A)$ denotes Tate cohomology.  
\end{enumerate}
\end{lemm}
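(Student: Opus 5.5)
We prove both parts at once by showing that $\Z_p\otimes\bullet$ commutes with the cohomology functors, using that $\Z_p$ is flat over $\Z$ and that $G$ is finite.

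\textbf{Part (i).} We compute $H^i(G,A)$ with the standard complex of inhomogeneous cochains $C^i(G,A)=\mathrm{Map}(G^i,A)$. Since $G^i$ is finite, $\mathrm{Map}(G^i,A)\cong A^{|G|^i}$ as abelian groups. Hence there is a natural isomorphism
\[
\Z_p\otimes C^i(G,A)\cong C^i(G,\Z_p\otimes A),
\]
and it is compatible with the coboundary maps, because those are $\Z$-linear combinations of the $G$-action and of evaluations. Tensoring the complex $C^\bullet(G,A)$ with the flat $\Z$-module $\Z_p$ preserves kernels and images, so it commutes with taking cohomology. This gives
\[
\Z_p\otimes H^i(G,A)\cong H^i(G,\Z_p\otimes A).
\]
Equivalently, $H^i(G,A)=\mathrm{Ext}^i_{\Z[G]}(\Z,A)$. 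Computing it from a resolution of $\Z$ by finitely generated free $\Z[G]$-modules, we have $\Hom_{\Z[G]}(\Z[G]^n,A)=A^n$, and the same flatness argument applies.

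\textbf{Part (ii).} The Tate cohomology groups $\widehat H^i(G,A)$ can be computed from the complete standard resolution, which consists of finitely generated free $\Z[G]$-modules. Applying $\Hom_{\Z[G]}(\bullet,A)$ to it yields a complex whose terms are finite direct sums of copies of $A$. The same argument as in part (i) therefore gives
\[
\Z_p\otimes \widehat H^i(G,A)\cong \widehat H^i(G,\Z_p\otimes A).
\]
For the low degrees one can argue directly instead. The groups $\widehat H^0(G,A)=A^G/\tr_G A$ and $\widehat H^{-1}(G,A)=\ker(\tr_G)/I_GA$ are built from kernels and cokernels of the maps $\tr_G$ and $g-1$ on $A$. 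These constructions commute with the exact functor $\Z_p\otimes\bullet$.

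\textbf{Main point.} The only thing that needs care is exactness of $\Z_p\otimes\bullet$, and this holds because $\Z_p$ is torsion-free over the PID $\Z$, hence flat. Finiteness of $G$ is what makes the cochain modules finite products, so that tensoring commutes with them. Everything else in the argument is formal.
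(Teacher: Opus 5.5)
Your proof is correct and complete. Because $G$ is finite, the (complete) standard resolution of $\Z$ over $\Z[G]$ consists of finitely generated free modules. Hence $\Z_p\otimes\Hom_{\Z[G]}(X_\bullet,A)\cong\Hom_{\Z[G]}(X_\bullet,\Z_p\otimes A)$ as complexes, and flatness of $\Z_p$ over $\Z$ lets $\Z_p\otimes\bullet$ pass through kernels, images and therefore cohomology. This gives both (i) and (ii).

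The paper does not prove this lemma; it only cites \cite[Lem.~2.1]{LeeSeo}. So there is no argument in the text to compare yours with, and your flatness argument is the standard proof.

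Your proof also yields two facts the paper implicitly relies on later:
\begin{enumerate}
\item The isomorphism you construct is natural in $A$, so it is compatible with the maps induced by $\mu_{M,p}\to\Z_p\otimes U_M\to\EE_M$ and with restriction.
\item Every $\widehat{H}^i(G,A)$ is killed by $|G|$, so $\Z_p\otimes\widehat{H}^i(G,A)$ is just the $p$-primary part of $\widehat{H}^i(G,A)$. This is the form used when $G=P$ is a $p$-group, e.g.\ $H^i(P,U_K)=H^i(P,\Z_p\otimes U_K)$.
\end{enumerate}
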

\noindent
If the Sylow $p$-subgroup  $\mu_{M, p}$  of $\mu_M$ is $G$-acyclic, then by Lemma~\ref{lemma tensor cohomology}, there is an isomorphism
\begin{equation*} \label{iso unit tensor}
    H^1(G, \Z_p \otimes U_M) \cong H^1(G, \EE_M).
\end{equation*}
A sufficient condition for this is that~$\mu_{M,p}$ be $G$-cohomologically trivial, for which we recall the following criterion (cf. \cite[Lem. 5.4.4(1)]{Popescu}, \cite[Lem.~9.1.4]{NSW}).

\begin{lemm}\label{coho-trivial}
The following statements hold.
\begin{enumerate}
\item Suppose that $p\neq 2$.
Then $\mu_{M,p}$ is $G$-cohomologically trivial if and only if $\mu_{M,p}=1$ or $p \nmid [M:L(\mu_{M,p})]$.
\item Suppose that $p = 2$. 
 \begin{itemize}
\item[(a)] 
Then $\mu_{M,2}$ is $G$-cohomologically trivial if and only if $2 \nmid [M:L(\mu_{M,2})]$ and, in addition, when $\mu_{M,2} \neq \mu_{L,2}$, the field $L \cap \Q(\mu_{M,2})$ is not totally real.
\item[(b)] 
Moreover, if $\mu_{M,2} \neq \mu_{L,2}$, $M=L(\mu_{M,2})$ and
$L \cap \Q(\mu_{M,2})$ is totally real, we have
$$ G\cong G_{M/L(\sqrt{-1})} \times G_{L(\sqrt{-1})/L} \ \ \text{ and } \  \  H^1(G, \mu_{M,2}) \cong \Z/2\Z.$$
 \end{itemize}
\end{enumerate}
\end{lemm}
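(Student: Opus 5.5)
The plan is to put $\mu:=\mu_{M,p}$ in the form $\mu\cong\Z/p^m\Z$ with $G$ acting through a character $\chi\colon G\to(\Z/p^m\Z)^\times$. The kernel of $\chi$ is $N:=G_{M/L(\mu)}$, and $\chi(G)\cong G_{L(\mu)/L}$. I will use two standard facts:
\begin{itemize}
\item a $G$-module is cohomologically trivial if and only if its restriction to a Sylow $p$-subgroup $S$ is; this is because $\mu$ is $p$-primary and restriction to $S$ is injective on $p$-primary cohomology;
\item for a cyclic group acting on a finite module, the Herbrand quotient is $1$, so vanishing of $\widehat H^0$ forces vanishing of all Tate cohomology.
\end{itemize}
The case $\mu=1$ is trivial, so assume $m\ge 1$.

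\emph{Necessity of $p\nmid[M:L(\mu)]$.} If $p\mid |N|$, choose $C\le N$ of order $p$. Then $C$ acts trivially on $\mu$, so $\widehat H^0(C,\mu)=\mu/\mu^p\neq 0$ and $\mu$ is not cohomologically trivial. From now on assume $p\nmid|N|$. Then $\chi$ is injective on $S$, so $S\cong\chi(S)$, a $p$-subgroup of $(\Z/p^m\Z)^\times$.

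\emph{Key computation (the main step).} Let $\langle\sigma\rangle$ be cyclic of order $n$, acting on $\Z/p^m\Z$ by multiplication by $a$. I compute $\widehat H^0=\mu^{\sigma}/N\mu$.
\begin{itemize}
\item Suppose $a\equiv1\bmod 4$ (for $p=2$) or $a\equiv 1\bmod p$ (for $p$ odd), and write $a=1+p^ju$ with $u$ a unit and $j\ge1$. Then $\mu^\sigma=\mu[p^j]$. The norm element is $(a^{n}-1)/(a-1)$, and the lifting-the-exponent lemma gives its $p$-adic valuation as $m-j$ (here $n=p^{m-j}$ is the order of $a$). Hence $N\mu=\mu[p^j]$ as well, $\widehat H^0=0$, and the Herbrand quotient gives cohomological triviality for $\langle\sigma\rangle$.
\item For $p=2$ and $a=-b$ with $b\equiv1\bmod4$, $b\ne 1$, such that $\langle a\rangle$ does not contain $-1$, a similar valuation count applies to $a^2$ and to $1+a$; the example $a=3$ on $\Z/8$ shows the pattern. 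Again $\widehat H^0=0$.
\item If $a=-1$, then $N=1+\sigma$ kills $\mu$, so $\widehat H^{-1}=\mu/\mu^2\neq0$.
\end{itemize}

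\emph{Case (i), $p$ odd.} Here $\chi(S)$ lies in the cyclic group $1+p\Z/p^m\Z$, so the first bullet applies to a generator of $S$, and $S$ acts cohomologically trivially. This proves (i).

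\emph{Case (ii)(a), $p=2$.} Now $\chi(S)\le\{\pm1\}\times(1+4\Z/2^m)$. The condition that $L\cap\Q(\mu)$ is totally real means exactly that complex conjugation $-1$ lies in $\chi(G)$, equivalently in $\chi(S)$.
\begin{itemize}
\item If $-1\in\chi(S)$, restrict to the subgroup $\chi^{-1}(-1)$ of order $2$. The third bullet shows $\mu$ is not cohomologically trivial.
\item If $-1\notin\chi(S)$, then $\chi(S)$ is a subgroup of $\{\pm1\}\times C$, with $C$ cyclic, that misses $-1$, so it is cyclic. It is generated either by $a\equiv1\bmod4$ or by $a=-b$ as in the second bullet, and in both cases $\widehat H^0=0$.
\end{itemize}
If $\mu_{M,2}=\mu_{L,2}$, then $G$ acts trivially on $\mu$, so the condition reduces to $S=1$. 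I expect the bookkeeping of the valuation of the norm element in the $p=2$ subcases to be the only delicate point.

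\emph{Case (ii)(b).} Here $N=1$, so $G\cong\chi(G)$, and $\chi(G)$ contains $-1$. A subgroup of $\{\pm1\}\times C$ containing $-1$ equals $\{\pm1\}\times A$ with $A=\chi(G)\cap(1+4\Z)$. Moreover $\chi^{-1}(A)$ is the subgroup fixing $\sqrt{-1}$, namely $G_{M/L(\sqrt{-1})}$, which gives the stated direct product decomposition. The subgroup $A$ is cyclic and acts cohomologically trivially by the first bullet, so Hochschild–Serre gives
\[
H^1(G,\mu)\cong H^1\bigl(\langle -1\rangle,\mu^{A}\bigr).
\]
Here $\mu^A=\mu[2^j]$, with $j\ge 2$ or $A$ trivial, and $-1$ acts on it by inversion. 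Therefore
\[
H^1\bigl(\langle -1\rangle,\mu[2^j]\bigr)=\ker(1+\sigma)/(\sigma-1)\mu[2^j]=\mu[2^j]/\mu[2^j]^2\cong\Z/2\Z.
\]
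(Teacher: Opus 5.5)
The paper gives no proof of this lemma; it quotes it from \cite[Lem.~5.4.4(1)]{Popescu} and \cite[Lem.~9.1.4]{NSW}. Your proposal is a correct, self-contained proof, so the comparison is between your argument and a bare citation.

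\textbf{Your route.} You reduce to a Sylow $p$-subgroup $S$. When $p\mid[M:L(\mu)]$, a subgroup of order $p$ inside $G_{M/L(\mu)}$ acts trivially and obstructs cohomological triviality. Otherwise $S\cong\chi(S)\le(\Z/p^m\Z)^\times$, and two cases arise:
\begin{itemize}
\item For $p$ odd, or for $p=2$ with $-1\notin\chi(S)$, the group $\chi(S)$ is cyclic and an explicit norm computation gives $\widehat H^0=0$.
\item For $p=2$ with $-1\in\chi(S)$, the element $\tau$ with $\chi(\tau)=-1$ gives $\widehat H^{-1}(\langle\tau\rangle,\mu)=\mu/2\mu\neq0$.
\end{itemize}

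\textbf{What this buys over the citation.} It shows that the only obstruction beyond $p\mid[M:L(\mu)]$ is complex conjugation. It also yields (ii)(b) almost for free, via $\chi(G)=\{\pm1\}\times A$ and the inflation isomorphism $H^1(G,\mu)\cong H^1(\langle\tau\rangle,\mu^{A})$. This statement is not literally contained in the references.

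\textbf{Points to tighten.} None of these is a genuine gap.
\begin{enumerate}
\item Your second bullet is only gestured at; it should be written out. Take $a=-b$ with $b=1+2^ju$, $u$ odd, $2\le j<m$. Then $v_2(a-1)=1$, so $\mu^\sigma=\mu[2]$. Factor $N_{\langle\sigma\rangle}=(1+a)\,N_{\langle\sigma^2\rangle}$. Here $v_2(1+a)=v_2(1-b)=j$, and $v_2(N_{\langle\sigma^2\rangle})=m-j-1$ by your first bullet applied to $a^2=b^2$. Hence $N\mu=2^{m-1}\mu=\mu[2]=\mu^\sigma$. The hypothesis that $\langle a\rangle$ avoids $-1$ is automatic once $a\neq-1$: in $\{\pm1\}\times C$, the equation $a^t=-1$ forces $t$ odd and hence $a=-1$.
\item The Herbrand quotient plus periodicity only kills $\widehat H^q(\langle\sigma\rangle,\mu)$ for the cyclic group itself. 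Cohomological triviality also requires all subgroups. You can get this from the criterion that, for a $p$-group, two consecutive vanishing Tate groups suffice. Alternatively, note that each subgroup is generated by some $a^{p^t}$, which is again covered by your first bullet.
\item $\chi^{-1}(-1)$ is a coset of $G_{M/L(\mu)}$, not a subgroup of order $2$. You mean $\langle\tau\rangle$ for the unique $\tau\in S$ with $\chi(\tau)=-1$.
\end{enumerate}
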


\noindent We include a proof of the following well-known lemma for completeness.
\begin{lemm}\label{lemma Brauer group}
If $M/L$ is unramified at all but possibly one place, then we have $H^1(G, P_M)\cong H^2(G, U_M)$.
\end{lemm}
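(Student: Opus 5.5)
We start from the exact sequence of $G$-modules
\begin{equation*}
1 \longrightarrow U_M \longrightarrow M^\times \longrightarrow P_M \longrightarrow 1,
\end{equation*}
where the map $M^\times \to P_M$ sends $x$ to the principal ideal $x\calO_M$. The long exact cohomology sequence gives
\begin{equation*}
H^1(G,M^\times) \longrightarrow H^1(G,P_M) \longrightarrow H^2(G,U_M) \longrightarrow H^2(G,M^\times) \longrightarrow H^2(G,P_M).
\end{equation*}
By Hilbert's Theorem 90, $H^1(G,M^\times)=0$, so $H^1(G,P_M)$ injects into $H^2(G,U_M)$. The claim therefore reduces to showing that the map $H^2(G,U_M)\to H^2(G,M^\times)$ is zero, i.e.\ that every class of $H^2(G,U_M)$ dies in the relative Brauer group $H^2(G,M^\times)=\mathrm{Br}(M/L)$.

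To see this, we localize. By the Albert--Brauer--Hasse--Noether theorem there is an exact sequence
\begin{equation*}
0 \longrightarrow H^2(G,M^\times) \longrightarrow \bigoplus_{v} H^2(G_w,M_w^\times) \xrightarrow{\ \sum \mathrm{inv}_v\ } \Q/\Z,
\end{equation*}
where $v$ runs over the places of $L$ and $w\mid v$ is a chosen place of $M$ with decomposition group $G_w$. So it suffices to show that the image of a class $c\in H^2(G,U_M)$ has trivial local component at every place $v$ except possibly one. Once this is established, the reciprocity condition $\sum_v \mathrm{inv}_v=0$ forces the remaining component to vanish as well, and hence $c\mapsto 0$ in $H^2(G,M^\times)$.

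For the local vanishing, the composite $H^2(G,U_M)\to H^2(G,M^\times)\to H^2(G_w,M_w^\times)$ factors through restriction to $G_w$ followed by $H^2(G_w,U_M)\to H^2(G_w,U_{M_w})\to H^2(G_w,M_w^\times)$, where $U_{M_w}$ denotes the local units (for archimedean $w$ we take $U_{M_w}=M_w^\times$). We treat the places by type.
\begin{itemize}
\item \textbf{Finite unramified places.} If $v$ is finite and unramified, then $H^2(G_w,U_{M_w})=0$, since local units are cohomologically trivial in unramified extensions. Hence the local component vanishes.
\item \textbf{Archimedean places.} If $v$ is infinite, the hypothesis that $v$ is unramified gives $G_w=1$, so the local component vanishes trivially.
\end{itemize}
Thus only the single possibly ramified place can carry a nonzero local component, and the reciprocity argument above kills it.

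The main point requiring care is this last step, namely the precise use of the sum-of-invariants exact sequence together with the factorization through local units. Everything else is formal: Hilbert's Theorem 90 for injectivity and standard local facts for the vanishing at unramified places.
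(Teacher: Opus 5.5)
Your proposal is correct and follows essentially the same route as the paper's proof. Hilbert 90 reduces the claim to the vanishing of $H^2(G,U_M)\to \mathrm{Br}(M/L)$. The local components then vanish at every unramified place because local units are cohomologically trivial there. Finally, the reciprocity law $\sum_v \mathrm{inv}_v=0$, together with the injectivity of each local invariant map, kills the single remaining place.

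The only difference is presentation: the paper routes through inflation into $\mathrm{Br}(L)$ and the fundamental exact sequence $0\to\mathrm{Br}(L)\to\bigoplus_v\mathrm{Br}(L_v)\to\Q/\Z\to 0$, whereas you state the relative version directly. You use the injectivity of $\mathrm{inv}_{v}$ only implicitly (when you say reciprocity "forces" the last component to vanish); it is worth stating it explicitly.
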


\begin{proof}
Applying $G$-cohomology to the  short exact sequence 
$0\rightarrow U_M \rightarrow M^\times \rightarrow P_M\rightarrow 0$ yields 
the exact sequence $0\rightarrow H^1(G, P_M) \rightarrow H^2(G, U_M) \rightarrow \mathrm{Br}(M/L)$,  where $ \mathrm{Br}(M/L) := H^2(G, M^\times )$.
It suffices to prove that the map  $H^2(G, U_M) \rightarrow \mathrm{Br}( M/L)$ is zero.

We have the following  commutative diagram
$$
\begin{tikzcd}
&   H^2\left(G,  U_M  \right) \ar{d}\ar{r}{ \oplus_v{res}_vU} &\bigoplus_v H^2\left(G_v, U_{M_v}\right) \ar{d} & &\\
&\mathrm{Br}(M/L) \ar{r}{\oplus_v{res}_v M} \ar[hook]{d}{{inf}} &\bigoplus_v\mathrm{Br}(M_v/L_v) \ar[hook]{d}{\oplus_v{inf}_v}& & \\
 0 \ar{r}&  \mathrm{Br}(L) \ar{r} &\bigoplus _v\mathrm{Br}(L_v)\ar{r}{\oplus_v {inv}_v} &\mathbb{Q} /\mathbb{Z} \ar{r} &0,
\end{tikzcd}
$$
 where $v$ runs through all places of $L$, $G_v$ denotes the decomposition subgroup of $G$ at $v$, ${res}_vU$ and ${res}_vM$ are appropriate restriction maps, ${inf}$ and ${inf}_v$ are appropriate inflation maps,  and $U_{M_v}$ denotes the group of local units of $M$ at $v$. 
 (Here we use the same symbol $v$ for a prime of $L$ and a prime of $M$ above it, as no confusion arises.) 
 The bottom row is the fundamental exact sequence of class field theory on Brauer groups (see \cite[Thm.~8.17]{NSW}).
 
Note that the vertical maps ${inf}$ and ${inf}_v$'s are injective for each $v$ by the inflation-restriction exact sequence with the help of Hilbert's theorem $90$.
Furthermore, since the group of local units $U_{M_v}$ is $G_v$-cohomologically trivial for  unramified $v$'s, we have $\bigoplus_v H^2\left(G_v, U_{M_v}\right)=H^2(G_{v'}, U_{M_{v'}})$ for some $v'$ by the assumption.
Now  the claim follows from the above diagram by noting that $\mathrm{Br}(L_v)\stackrel{{inv}_v}\longrightarrow \mathbb{Q}/\mathbb{Z}$ is injective for each $v$.
\end{proof}

\subsection{Galois descent of unit lattice}\label{subsec-descent}

We discuss the Galois descent of unit lattices. The short exact sequence 
 $$
 0\longrightarrow \mu_M \longrightarrow U_M \longrightarrow E_M \longrightarrow 0
 $$
  induces an injection $E_L \hookrightarrow E^{G}_M$, and hence, by Lemma \ref{lemma tensor cohomology}, an injection $\EE_L \hookrightarrow \EE^{G}_M$. More precisely, one has the isomorphism  (cf. \cite[Lem. 2.14]{Bartel12})
\begin{equation}\label{remark unit lattice galois fixed} 
 \EE^{G}_M / \EE_L  \cong    \ker \big (H^1(G, \mu_{M,p}) \to H^1(G, \Z_p \otimes U_M) \big )  .
\end{equation}    
 It follows that Galois descent $\EE_M^G=\EE_L$ holds when $\mu_{M,p}$ is $G$-acyclic. We also recall the following well-known lemma.

\begin{lemm}\label{remark subfield Minkowski}
Let $N$ be an intermediate subfield of $M/L$ with $H=G_{M/N}$.
\begin{enumerate}
    \item 
If Isomorphism \eqref{isoM}  for $M/L$ holds, then we have $\EE_N=(\EE_M)^H=\tr_H \EE_M$.
    \item 
     If $N/L$ is moreover Galois, then Isomorphism \eqref{isoM}   for $N/L$ holds.
\end{enumerate}
\end{lemm}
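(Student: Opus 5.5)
Assume \eqref{isoM} for $M/L$, i.e.\ $\EE_M\cong \Z_p[G]/(\tr_G)\oplus\Z_p[G]^{r_L}$, with $G=G_{M/L}$. (Inside this lemma, \eqref{isoM} is read for the extension $M/L$, so $r_L$ plays the role of $r_k$.) The plan is to compute $H$-invariants and $H$-traces on each summand directly, and to compare with $\EE_N$ through the injection $\EE_N\hookrightarrow\EE_M^H$ from \eqref{remark unit lattice galois fixed}. This comparison will use Dirichlet ranks, and torsion-freeness or saturation, rather than cohomology of $\mu$.

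\textbf{Free summands.} For $\Z_p[G]$ one has $\Z_p[G]^H=\tr_H\Z_p[G]$. Moreover this module is $\cong\Z_p[G/H]$ as a $\Z_p[G]$-module, and also as an $N_G(H)/H$-module when that makes sense.

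\textbf{Summand $A:=\Z_p[G]/(\tr_G)$.} Use the exact sequence $0\to\Z_p\xrightarrow{\tr_G}\Z_p[G]\to A\to 0$ and take $H$-invariants. This gives
$0\to\Z_p\to\tr_H\Z_p[G]\to A^H\to H^1(H,\Z_p)=0$.
Hence $A^H\cong\tr_H\Z_p[G]/(\tr_G)\cong\Z_p[G/H]/(\tr_{G/H})$. This image is exactly $\tr_H A$, since $\tr_H\Z_p[G]$ surjects onto it. So for the whole lattice $\EE_M^H=\tr_H\EE_M$, and
\[
\EE_M^H\cong \Z_p[G/H]/(\tr_{G/H})\oplus\Z_p[G/H]^{r_L}.
\]

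\textbf{Identifying $\EE_N$ with $\EE_M^H$.} We know $\EE_N\subseteq\EE_M^H$, and both have $\Z_p$-rank $\rk_\Z U_N$; this equality of ranks is the Dirichlet–Herbrand count, using that no infinite place ramifies. So the quotient $\EE_M^H/\EE_N$ is finite. By \eqref{remark unit lattice galois fixed} it equals the kernel of $H^1(H,\mu_{M,p})\to H^1(H,\Z_p\otimes U_M)$, which need not vanish a priori. This is the main obstacle, and I would attack it as follows.

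First, $\tr_H\EE_M\subseteq\EE_N$ always holds, because the norm $N_{M/N}$ maps $U_M$ into $U_N$. Combined with $\EE_M^H=\tr_H\EE_M$ from the computation above, this gives
\[
\EE_M^H=\tr_H\EE_M\subseteq\EE_N\subseteq\EE_M^H .
\]
Hence all three coincide, which proves (i).

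For (ii), $H$ is normal and $G/H=G_{N/L}$. The isomorphism displayed above is then one of $\Z_p[G/H]$-modules: $H$ acts trivially on the invariants, so the $G$-structure factors through $G/H$. Combining this with (i) gives $\EE_N\cong\Z_p[G_{N/L}]/(\tr)\oplus\Z_p[G_{N/L}]^{r_L}$, which is \eqref{isoM} for $N/L$.
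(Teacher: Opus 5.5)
Your proof is correct and is essentially the paper's argument. The paper likewise derives $(\Z_p[G])^H=\tr_H\Z_p[G]$ and $(\Z_p[G]/(\tr_G))^H=\tr_H\Z_p[G]/(\tr_G)$ from the same short exact sequence, and then uses that $N_{M/N}$ followed by $\EE_N\hookrightarrow\EE_M$ is multiplication by $\tr_H$, which is exactly your sandwich $\tr_H\EE_M\subseteq\EE_N\subseteq\EE_M^H$.

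Two minor points, neither of which affects the proof. The rank count and the discussion of $H^1(H,\mu_{M,p})$ are unnecessary. Your side remark identifying $\tr_H\Z_p[G]$ with $\Z_p[G/H]$ as a $\Z_p[G]$-module only makes sense for normal $H$, since otherwise $\tr_H\Z_p[G]$ is not a left $G$-submodule.
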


\begin{proof}
Applying $H$-cohomology to the short exact sequence
   \begin{equation*}
       0\longrightarrow (\tr_{G}) \longrightarrow \mb Z_p[G] \longrightarrow  \Z_p[G]/(\tr_G) \rightarrow 0,
      \end{equation*}
we obtain
\begin{equation}\label{equation 0 tate cohomology}
   (\Z_p[G])^H=\tr_H\cdot \Z_p[G] \  \text{ and } \   \big ( \Z_p[G]/(\tr_G )\big )^{H} =  \big ( \tr_H \cdot\Z_p[G] \big )/(\tr_G )  
\end{equation}
   by noting  $H^1(H, (\tr_G ))=0$ and $(\tr_G)^H =(\tr_G)$.
If $H$ is moreover normal in $G$ with $\bar{G}:=G/H$, then we have 
\begin{equation} \label{equation trace group ring}
\tr_H \cdot \Z_p[G] \cong \Z_p[\bar{G}]   \ \text{ and } \  \big(\tr_H \cdot \Z_p[G]  \big) /(\tr_G)   \cong \Z_p[\bar{G}]/(\tr_{\bar{G}}) 
\end{equation}
as $\Z_p[\bar{G}]$-lattices.
 
Assume that Isomorphism \eqref{isoM} for $M/L$ holds.
Let $\theta_M: \EE_M \to \Z_p[G]/(\tr_G) \oplus \Z_p[G]^{r_L}$ be a $\Z_p[G]$-isomorphism.  
Then, we have the commutative diagram
\begin{equation}\label{dia-compatib}
    \begin{tikzcd}
\EE_M \arrow[d, "\theta_M"] \arrow[r, "N_{M/N}"] & \EE_N \arrow[r, hook] & \EE_M \arrow[d, "\theta_M"] \\
{\Z_p[G]/(\tr_G) \oplus \Z_p[G]^{r_L}} \arrow[rr, "\tr_H"]            &                       & {\Z_p[G]/(\tr_G) \oplus \Z_p[G]^{r_L}}           
\end{tikzcd}
\end{equation}

From Diagram~\eqref{dia-compatib} and Equation~\eqref{equation 0 tate cohomology}, we deduce that $\EE_N=\EE^{H}_M = \tr_H \EE_M$. Hence, the norm map $N_{M/N} : \EE_M \to \EE_N$ is surjective. Moreover, if $N/L$ is Galois, then $N/L$ also satisfies Isomorphism \eqref{isoM} by Equation~\eqref{equation trace group ring}.  
\end{proof}

Therefore, the Galois descent of the unit lattice is a necessary condition for   Isomorphism \eqref{isoM}. 
 We return to the problem of  Galois descent in the case where  $\mu_{M,p}$ is not acyclic in \S \ref{sec-non-cohotri}.

\section{Cohomological determination of Galois modules}

In this section, we prove Theorem \ref{theo-main} and give several remarks.

\subsection{A representation theoretic proposition}

Let $G = H \times P$ be a fixed finite group, where $P$ is a $p$-group and $H$ is a finite group with $p \nmid |H|$. Let $M$ be a $\Z_p[G]$-lattice such that 
$$ \Q_p \otimes_{\Z_p} M \cong \Q_p[G]/(\tr_G) \oplus \Q_p[G]^n$$
as $\Q_p[G]$-modules for some non-negative integer $n$. Let $\K/\Q_p$ be a finite extension that is a splitting field for $H$, and let $\R$ denote its ring of integers. Without loss of generality, we may assume that $\K/\Q_p$ is unramified (cf. \cite[\S 12.3]{Serre}).

Let $\mm$ and $\mm_{\R}$ denote the maximal ideals of $\Z_p[P]$ and $\R[P]$, respectively. 
 For each irreducible character $\chi$ of $H$ over $\K$, let $N_\chi$ be the irreducible $\R[H]$-lattice such that $\K \otimes_{\R} N_{\chi}$ affords $\chi$ (cf. Remark \ref{remark representation} (ii)). 
Let $m_\chi$ denote the multiplicity of $\chi$ in the regular representation $\K[H]$.
Note that if $P=1$, then the assumption on $\Q_p \otimes_{\Z_p} M$ together with Remark~\ref{remark representation} (ii) implies $M \cong \Z_p[G]/(\tr_G) \oplus \Z_p[G]^{n}$. 
Thus, we will assume that $P$ is non-trivial.

For any finitely generated $\R$-module $V$, let $\rk_\R V$ denote the $\K$-dimension of $\K \otimes_{\R} V$. If $V$ is a finitely generated $\R[P]$-module, let  $d_{\R[P]} V$ denote the minimal number of generators of $V$ over $\R[P]$. By Nakayama's lemma, this is equal to the dimension of $ V/\mm_{\R} V$ over the residue field $\R/p\R $.

The main result of this subsection is the following representation-theoretic proposition.

\begin{prop}\label{prop-repre}
The lattice $M$ is isomorphic to $\Z_p[G]/(\tr_G) \oplus \Z_p[G]^n$ if and only if we have $M/\mm M \cong \F^{(n+1)|H|}_p$.
\end{prop}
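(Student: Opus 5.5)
The plan is to compute the forward direction directly and, for the converse, to pass to the splitting ring $\R$, decompose by characters of $H$ using Theorem~\ref{Jones}, and count generators character by character.

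\emph{Forward direction.} Since $\Z_p[G]\cong \Z_p[H]\otimes_{\Z_p}\Z_p[P]$ and $\Z_p[P]/\mm\cong\F_p$, we get $\Z_p[G]/\mm\Z_p[G]\cong \F_p[H]$, which has dimension $|H|$. Moreover $\tr_G=\tr_H\tr_P$ with $\tr_P\in\mm$, so $(\tr_G)\subset \mm\Z_p[G]$. Hence $\Z_p[G]/(\tr_G)$ modulo $\mm$ is again $\F_p[H]$. Adding the $n$ free summands gives $M/\mm M\cong\F_p^{(n+1)|H|}$.

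\emph{Converse: extension of scalars and character decomposition.} Because $\K/\Q_p$ is unramified, $\mm_\R=(p,I_P)\R[P]=\R\otimes\mm$. Therefore $(\R\otimes M)/\mm_\R(\R\otimes M)\cong \R\otimes_{\Z_p}(M/\mm M)$, and its dimension over $\R/p\R$ is $(n+1)|H|$.

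Since $p\nmid|H|$, each $N_\chi$ is a projective $\R[H]$-lattice. Hence $\R\otimes M\cong\bigoplus_\chi N_\chi\otimes_\R W_\chi$, where $W_\chi=\Hom_{\R[H]}(N_\chi,\R\otimes M)$ is an $\R[P]$-lattice; this is also consistent with Theorem~\ref{Jones}. Taking $\mm_\R$-coinvariants gives
$$(n+1)|H|=\sum_\chi \chi(1)\, d_{\R[P]}W_\chi .$$
On the rational side, we have $\K\otimes W_\chi\cong\K[P]^{m_\chi(n+1)}$ for $\chi\neq1$, and $\K\otimes W_1\cong \K[P]/(\tr_P)\oplus\K[P]^n$.

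\emph{Lower bounds and equality.} I would show $d_{\R[P]}W_\chi\ge m_\chi(n+1)$ for every $\chi$. Take a surjection $\R[P]^d\to W_\chi$ and tensor with $\K$. For a nontrivial character $\psi$ of $P$ (one exists since $P\ne1$), the $\psi$-isotypic part of $\K\otimes W_\chi$ has multiplicity $m_\chi(n+1)$, while that of $\K[P]^d$ has multiplicity $d$; surjectivity forces $d\ge m_\chi(n+1)$. Summing, and using $\sum_\chi\chi(1)m_\chi=|H|$, every inequality must be an equality.

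For $\chi\neq1$, this gives a surjection $\R[P]^{a}\to W_\chi$ between lattices of equal $\R$-rank, which is therefore an isomorphism, so $W_\chi$ is free.

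\emph{Main obstacle: the trivial character.} For $\chi=1$ we get a surjection $\R[P]^{n+1}\to W_1$ whose kernel $L$ is a saturated rank-one sublattice with trivial $P$-action. Hence $L\subset(\R[P]^{n+1})^P=\tr_P\R^{n+1}$, so $L=\R\,\tr_P u$ for some $u\in\R^{n+1}$. The coefficients of $\tr_P u$ are those of $u$, so saturation forces $u$ to be primitive. Then some element of $GL_{n+1}(\R)\subset\mathrm{Aut}_{\R[P]}(\R[P]^{n+1})$ sends $u$ to $e_1$, giving $W_1\cong \R[P]/(\tr_P)\oplus\R[P]^n$. Reassembling, $\R\otimes M\cong \R\otimes T$, where $T=\Z_p[G]/(\tr_G)\oplus\Z_p[G]^n$.

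\emph{Descent to $\Z_p$.} As $\Z_p[G]$-modules, $\R\otimes M\cong M^{f}$ and $\R\otimes T\cong T^f$, where $f=[\K:\Q_p]$. Hence $M^f\cong T^f$. The Krull--Schmidt theorem holds for $\Z_p[G]$-lattices because $\Z_p$ is complete, so cancelling gives $M\cong T$.
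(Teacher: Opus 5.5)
Your proof is correct, and its skeleton matches the paper's. You pass to the unramified splitting ring $\R$, identify $d_{\R[P]}(\R\otimes M)$ with $\dim_{\F_p}M/\mm M$, split by characters of $H$, show each $\chi$-part needs at least $(n+1)\chi(1)^2$ generators, force equality, and descend by Krull--Schmidt. Your lower bound uses the isotypic count of a nontrivial simple $P$-constituent; this should be read over $\overline{\K}$, since an unramified $\K$ has no nontrivial $p$-power roots of unity for odd $p$. It is equivalent to the paper's rank bound in Claim~1.

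The genuine difference is that you bypass Jones's theorem. You use the Morita-type decomposition $\R\otimes M\cong\bigoplus_\chi N_\chi\otimes_\R W_\chi$ with $W_\chi=\Hom_{\R[H]}(N_\chi,\R\otimes M)$. This holds because $\R[H]\cong\prod_\chi\mathrm{End}_\R(N_\chi)$ when $p\nmid|H|$ and $\K$ splits $H$; that is the fact to cite, rather than mere projectivity of $N_\chi$. It makes the nontrivial characters immediate, since an equal-rank surjection $\R[P]^{a}\to W_\chi$ is an isomorphism. The paper instead decomposes $e_\chi M_\R$ into summands $N_\chi\otimes M_\psi$ and compares $|P|\,d_{\R[P]}(M_\psi)$ with $\rk_\R M_\psi$ summand by summand.

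For $\chi=1$, your unimodular-vector argument (the kernel is $\R\,\tr_P u$ with $u$ primitive, then $u\mapsto e_1$ by $GL_{n+1}(\R)$) is a more explicit version of the paper's splitting off of $\R[P]^n$ by dualization. Your forward direction is a direct computation over $\Z_p$ using $\tr_P\in\mm$, instead of an appeal to Lemma~\ref{lemm-KSD}.

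Your route is therefore more elementary and self-contained. What the paper gains from Jones's theorem is the indecomposability statements of Lemma~\ref{lemm-KSD} and Remark~\ref{remark indecomposable tensor}, which the proposition itself does not need.
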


To apply Theorem \ref{Jones}, we set $M_{\R}:=\R \otimes_{\Z_p} M$. Since $\R$ is free over $\Z_p$, for any $\Z_p[G]$-lattice $N$, one has $\R \otimes_{\Z_p} N \cong N^{[\K:\Q_p]}$ as $\Z_p[G]$-lattices. By the Krull-Schmidt theorem, the condition $M \cong \Z_p[G]/(\tr_G) \oplus \Z_p[G]^n$ is therefore equivalent to $M_{\R} \cong \R[G]/(\tr_G ) \oplus \R[G]^n$. 

Since $\K/\Q_p$ is unramified, we have $\mm_{\R}=\mm \R[P]$.
Tensoring the exact sequence
\begin{equation*}
    0 \longrightarrow \mm M \longrightarrow M \longrightarrow M/\mm M \longrightarrow 0
\end{equation*}
with $\R \otimes_{\Z_p} \bullet$, we obtain an isomorphism $ \R \otimes_{\Z_p} \big ( M/\mm M \big ) \cong M_{\R}/\mm_{\R} M_{\R}$ and the identity $d_{\R[P]} M_{\R} = \mathrm{dim}_{\F_p} M/\mm M.$ Consequently, Proposition \ref{prop-repre} reduces to the following lemma.

\begin{lemm}\label{reductionlemma}
We have $M_{\R} \cong \R[G]/(\tr_G) \oplus \R[G]^n$ if and only if $d_{\R[P]} (M_{\R})=(n+1)|H|$.    
\end{lemm}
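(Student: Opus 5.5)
One direction is a direct count. If $M_{\R}\cong \R[G]/(\tr_G)\oplus \R[G]^n$, then
\[
\R[G]=\R[H]\otimes_{\R}\R[P]\cong \R[P]^{|H|}
\]
as $\R[P]$-modules, so $d_{\R[P]}\R[G]=|H|$. For the other summand, $\R[G]/(\tr_G)\cong (\R[H]\otimes\R[P])/(\tr_H\otimes\tr_P)$. Reducing modulo $\mm_{\R}$ kills $\tr_P$, because $\tr_P\in\mm_{\R}$ when $P\neq 1$. Hence $d_{\R[P]}\big(\R[G]/(\tr_G)\big)=|H|$ as well, and the total is $(n+1)|H|$. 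The substance is the converse.

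For the converse, decompose $M_{\R}$ with Theorem~\ref{Jones}:
\[
M_{\R}\cong\bigoplus_{\chi}\bigoplus_{j} N_\chi\otimes_{\R} L_{\chi,j},
\]
where each $L_{\chi,j}$ is an indecomposable $\R[P]$-lattice. Equivalently, $M_{\R}\cong\bigoplus_\chi N_\chi\otimes_{\R} L_\chi$ with $L_\chi:=\bigoplus_j L_{\chi,j}\cong\Hom_{\R[H]}(N_\chi,M_{\R})$ (using that $H$ is split and $p\nmid|H|$, so the $\chi$-isotypic idempotents lie in $\R[H]$). As an $\R[P]$-module, $N_\chi\otimes L_\chi\cong L_\chi^{\chi(1)}$. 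Also $\chi(1)=m_\chi$ and $\sum_\chi m_\chi\chi(1)=|H|$. This gives
\[
d_{\R[P]}M_{\R}=\sum_\chi \chi(1)\, d_{\R[P]}L_\chi .
\]

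Next I read off the $\K[P]$-structure of each $L_\chi$ from the rational hypothesis. Tensoring $\Q_p[G]/(\tr_G)\oplus\Q_p[G]^n$ up to $\K$ and taking the $\chi$-part gives
\[
\K\otimes L_\chi\cong \K[P]^{(n+1)\chi(1)} \quad\text{for }\chi\neq 1,
\qquad
\K\otimes L_1\cong \K[P]^{n}\oplus \K[P]/(\tr_P).
\]
The key inequality is that for any $\R[P]$-lattice $L$ one has $d_{\R[P]}L\ge$ the multiplicity of the faithful characters of the cyclic group $P$ in $\K\otimes L$ (with $\K$ enlarged if needed, or working with the irreducible $\Q_p$-constituent $\Q_p(\zeta_{|P|})$). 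Indeed, a surjection $\R[P]^d\to L$ gives a surjection $\K[P]^d\to\K\otimes L$, and the faithful constituent occurs once in $\K[P]$. Applied here, this yields $d(L_\chi)\ge(n+1)\chi(1)$ for $\chi\neq1$ and $d(L_1)\ge n+1$, provided $P\neq1$ so that $\K[P]/(\tr_P)$ still contains the faithful constituent. Summing, $d_{\R[P]}M_{\R}\ge(n+1)|H|$, with equality if and only if each $L_\chi$ attains its bound.

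The main obstacle is the equality case: I must show that an $\R[P]$-lattice $L$ whose rational type is $\K[P]^{m}$ (respectively $\K[P]^{n}\oplus\K[P]/(\tr_P)$) and which is generated by $m$ (respectively $n+1$) elements is isomorphic to $\R[P]^m$ (respectively $\R[P]^n\oplus\R[P]/(\tr_P)$). The first case is easy: a surjection $\R[P]^m\to L$ between lattices of equal $\R$-rank is an isomorphism. For $L_1$, take a surjection $\pi\colon\R[P]^{n+1}\to L_1$. Its kernel is a rank-one lattice on which $P$ acts trivially, hence it is of the form $\R\cdot x$ with $x\in(\R[P]^{n+1})^P=\tr_P\R[P]^{n+1}$. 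Write $x=\tr_P\cdot v$. If $v\notin\mm_{\R}\R[P]^{n+1}$, then $v$ extends to a basis, and $L_1\cong\R[P]^n\oplus\R[P]/(\tr_P)$, as required. If instead $v\in\mm_{\R}\R[P]^{n+1}$, then, since $\tr_P\cdot\mm_{\R}=p\,\tr_P\R[P]$, we get $x=p\,\tr_P v'$. Then $\tr_P v'$ would be a torsion element of $L_1$, which is impossible. It remains to check that $v'$ is well-defined, i.e.\ that $\tr_P v'$ is not already in $\R x$; this follows because $x$ is divisible by $p$ in the lattice. This settles the equality case and proves the lemma.
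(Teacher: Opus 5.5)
Your argument follows the paper's own route (Claims 1 and 2): split $M_{\R}$ into $\chi$-isotypic pieces via Theorem~\ref{Jones}, use additivity of $d_{\R[P]}$, bound each piece from below, and identify the extremal pieces. The equality analysis is correct and does not use any special property of $P$ beyond $P\neq 1$. For $\chi=1$, your argument writes the kernel generator as $x=\tr_P v$, rules out $v\in\mm_{\R}\R[P]^{n+1}$ using $\tr_P\mm_{\R}=p\,\tr_P\R[P]$, and then extends $v$ to a basis. This is a more explicit version of the paper's dualisation step. Your ``well-defined'' sentence is muddled, though. The clean reason is that $\R x$ is saturated in $\R[P]^{n+1}$ because $L_1$ is torsion-free. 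So $x=py$ forces $y=rx=rpy$, hence $y=0$ and $x=0$.

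The one step that needs repair is the lower bound. You phrase it through the multiplicity of the faithful constituent of the \emph{cyclic} group $P$. However, in \S3.1, where this lemma is stated, $P$ is an arbitrary $p$-group; cyclicity is only imposed in \S3.2. For instance, $P\cong(\Z/p\Z)^2$ has no faithful irreducible representation, so the step does not apply as written. The fix is exactly the paper's Claim 1. A surjection $\R[P]^d\to L$ gives $|P|\cdot d_{\R[P]}L\ge \rk_{\R}L$. Hence $d_{\R[P]}(L_\chi)\ge (n+1)\chi(1)$ for $\chi\neq 1$, and $d_{\R[P]}(L_1)\ge \lceil ((n+1)|P|-1)/|P|\rceil = n+1$ because $|P|\ge 2$. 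Alternatively, you could use any nontrivial irreducible $\K[P]$-constituent, normalised by its multiplicity in $\K[P]$. Finally, to close the argument you should state explicitly that
$N_1\otimes(\R[P]^n\oplus\R[P]/(\tr_P))\oplus\bigoplus_{\chi\neq 1}(N_\chi\otimes\R[P])^{(n+1)\chi(1)}$
is the decomposition of $\R[G]/(\tr_G)\oplus\R[G]^n$. This is Lemma~\ref{lemm-KSD}, which your necessity direction never actually establishes.
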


For each irreducible character $\chi$ of $H$ over $\K$, let $e_{\chi} \in \R[H]$ be the central primitive idempotent associated with $\chi$.

\begin{lemm}\label{lemm-KSD}
As an $\R[G]$-lattice, the module $\R[G]/(\tr_G)$ admits the Krull-Schmidt decomposition
\begin{equation*}
     \R[P]/(\tr_P) \oplus \bigoplus_{\chi \neq 1} \big ( N_{\chi} \otimes \R[P] \big )^{m_{\chi}},
\end{equation*}
where $\chi$ runs over the non-trivial irreducible characters of $H$ over $\K$.
\end{lemm}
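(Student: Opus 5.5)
We use the ring decomposition $\R[G]=\R[H]\otimes_\R\R[P]$. Since $p\nmid|H|$ and $\K$ splits $H$, we have $\R[H]=\bigoplus_\chi e_\chi\R[H]$. Each $e_\chi\R[H]$ is a matrix ring over $\R$, and as a left $\R[H]$-lattice it is isomorphic to $N_\chi^{m_\chi}$ with $m_\chi=\chi(1)$. Tensoring with $\R[P]$ over $\R$ gives
\begin{equation*}
\R[G]\cong\bigoplus_\chi \big(N_\chi\otimes_\R \R[P]\big)^{m_\chi}
\end{equation*}
as $\R[G]$-lattices. Because $e_\chi$ is central in $\R[G]$, it also yields $\R[G]/(\tr_G)\cong\bigoplus_\chi e_\chi\big(\R[G]/(\tr_G)\big)$.

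Next we compute $\tr_G$ on each component. We have $\tr_G=\tr_H\tr_P$ and $\tr_H=|H|e_1$ with $|H|\in\R^\times$. Hence $(\tr_G)=(e_1\tr_P)$, which lies in the summand $e_1\R[G]$. For $\chi\neq1$ this gives $e_\chi(\R[G]/(\tr_G))=e_\chi\R[G]\cong(N_\chi\otimes\R[P])^{m_\chi}$. For $\chi=1$ we have $e_1\R[G]\cong N_1\otimes\R[P]\cong\R[P]$ with trivial $H$-action, and under this identification the ideal $(e_1\tr_P)$ corresponds to $(\tr_P)$. So this component is $\R[P]/(\tr_P)$ with trivial $H$-action, which is $N_1\otimes\R[P]/(\tr_P)$.

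It remains to check that every listed summand is indecomposable. By the Example, $\R[P]$ and $\R[P]/(\tr_P)$ are indecomposable $\R[P]$-lattices, since $\R$ is local with residue characteristic $p$. Each $N_\chi$ is irreducible. Remark~\ref{remark indecomposable tensor}, which rests on Theorem~\ref{Jones} with $G_1=H$ and $G_2=P$, then shows that $N_\chi\otimes\R[P]$ and $N_1\otimes\R[P]/(\tr_P)$ are indecomposable $\R[G]$-lattices. This gives the stated Krull--Schmidt decomposition.

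The only delicate point is confirming that the quotient by $(\tr_G)$ affects only the $e_1$-component, and that this ideal equals $(\tr_P)$ there. Both follow from the invertibility of $|H|$ in $\R$, so the difficulty is bookkeeping rather than anything conceptual.
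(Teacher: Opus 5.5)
Your proposal is correct and follows essentially the same route as the paper. You split $\R[G]/(\tr_G)$ by the central idempotents $e_\chi$ and note that $(\tr_G)=(e_1\tr_P)$ only affects the $e_1$-component, which gives $N_1\otimes_\R \R[P]/(\tr_P)$, while $e_\chi\R[G]\cong (N_\chi\otimes_\R\R[P])^{m_\chi}$ for $\chi\neq 1$; you then get indecomposability from Remark~\ref{remark indecomposable tensor}. The only difference is that you spell out bookkeeping the paper leaves implicit, such as $\tr_H=|H|e_1$ with $|H|\in\R^\times$.
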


\begin{proof}
This follows since, for $\chi \neq 1$, we have $$e_{\chi} \big (\R[G]/(\tr_G) \big ) \cong e_{\chi} \R[G]/ e_{\chi} \tr_G\R[G]=e_{\chi}\R[G]$$ and $$e_{\chi}\R[G] \cong e_{\chi} \big (\R[H] \otimes_{\R} \R[P] \big ) \cong N^{m_{\chi}}_{\chi} \otimes_{\R} \R[P] \cong \big ( N_{\chi} \otimes_\R \R[P] \big)^{m_{\chi}}.$$ 
The case $\chi=1$, yielding $\R[P]/(\tr_P) \cong N_1 \otimes_{\R} \big(\R[P]/(\tr_P)\big)$, is similar (cf. Equation~\eqref{equation trace group ring}). The indecomposability of the summands in the statement follows from Remark~\ref{remark indecomposable tensor}.
\end{proof}

According to Theorem \ref{Jones}, $M_{\R}$ admits a Krull-Schmidt decomposition of the form
\begin{equation}\label{KS-decomp}
    M_{\R} \cong \bigoplus_{\psi, \chi} \big ( N_{\chi} \otimes_{\R} M_{\psi} \big )^{r_{\chi, \psi}}
\end{equation}
where $\chi$ and $\psi$ run over the irreducible $\R[H]$-lattices and indecomposable $\R[P]$-lattices, respectively, and $r_{\chi, \psi} \in \Z_{\geq 0}$.
From
\begin{equation} \label{equation K[P]-module structure}
    \K \otimes_\R \big ( e_{\chi} M_{\R}) \cong e_{\chi} \big ( \K \otimes_{\R} M_{\R} \big ) \cong e_{\chi} \big ( \K[G]/(\tr_G) \oplus \K[G]^n \big ),
\end{equation}
  we obtain, for each $\chi$, 
\begin{equation}\label{rk-rational}
   (n+1) \cdot m_{\chi} \cdot \rk_\R N_{\chi} \cdot |P| - \delta_{\chi,1} = \rk_\R \big ( e_{\chi} M_\R \big ) = \rk_{\R} N_{\chi} \cdot \bigg ( \sum_{\psi} r_{\chi, \psi} \cdot \rk_{\R} M_{\psi} \bigg ), 
\end{equation}
where $\delta_{\chi,1}=0$ if $\chi$ is non-trivial and $1$ otherwise.

\begin{proof}[\textbf{Proof of Lemma~\ref{reductionlemma}}]
The necessity follows easily from Lemma \ref{lemm-KSD} together with the identity
\begin{equation}\label{non-p identity}
    |H| =\sum_{\chi} m_{\chi} \cdot \rk_{\R} N_{\chi}.
\end{equation}
\noindent The sufficiency follows from the following two claims, which we prove in turn.
\vskip 5pt
\noindent \textbf{Claim 1}: If $d_{\R[P]} M_{\R} = (n+1) \cdot|H|$, then for each $\chi$ we have $$d_{\R[P]} (e_{\chi}M_{\R}) = (n+1) \cdot m_{\chi} \cdot \rk_\R N_{\chi}.$$
\vskip 5pt
By the identity $\sum_{\chi} d_{\R[P]} (e_{\chi} M_{\R}) = d_{\R[P]} (M_{\R})$ and \eqref{non-p identity}, if the claim were false, then there would exist some $\chi_0$ such that $d_{\R[P]}(e_{\chi_0}M_{\R}) < (n+1) \cdot m_{\chi_0} \cdot \rk_\R N_{\chi_0}.$ By Nakayama's lemma, this yields a surjective $\R[P]$-module homomorphism
\begin{equation*}
    \R[P]^{(n+1) \cdot m_{\chi_0} \cdot \rk_\R N_{\chi_0} -1} \longrightarrow e_{\chi_0}M_{\R}
\end{equation*}
and hence $\rk_\R(e_{\chi_0} M_\R) \leq |P| \cdot \big ( (n+1) \cdot m_{\chi_0} \cdot \rk_\R N_{\chi_0}-1 \big )$, which contradicts \eqref{rk-rational}.

\vskip 5pt
\noindent \textbf{Claim 2}: For a given $\chi$, if $d_{\R[P]} (e_{\chi} M_{\R}) = (n+1) \cdot m_{\chi} \cdot \rk_{\R} N_{\chi}$, then we have

\begin{equation*}
e_{\chi} M_\R \cong
\begin{cases}
\bigl( N_{\chi} \otimes_\R \R[P] \bigr)^{(n+1)m_{\chi}}
& (\chi \neq 1), \\[12pt]
\R[P]/(\tr_P) \oplus \R[P]^n
& (\chi = 1).
\end{cases}
\end{equation*}
\vskip 5pt
By the assumption, we have $d_{\R[P]} \big ( e_{1} M_\R \big ) = n+1$. 
By Nakayama's lemma, we obtain a short exact sequence $0 \to N \to \R[P]^{n+1} \to e_1 M_\R \to 0$ of $\R[P]$-modules, where $N \cong \R$ as $\R[P]$-modules by \eqref{equation K[P]-module structure}.
Considering the image of an $\R$-generator of $N$ in $\R[P]^{n+1}$, we obtain an exact sequence
\begin{equation*}
    0 \longrightarrow \R[P]^n \longrightarrow e_1M_\R \longrightarrow W \longrightarrow 0,
\end{equation*}
where $W$ is $\R$-torsion-free.
Taking $\Hom_{\R}(-,\R)$ and using that $\R[P] \cong \Hom_\R(\R[P], \R)$ is projective, we obtain $e_1M_\R \cong \R[P]^{n} \oplus W$ as $\R[P]$-modules.
 The $\K[P]$-module structure of $\K \otimes_\R (e_1M_\R)$ implies that $\tr_P$ annihilates $W$. A comparison of $\R$-ranks then yields $W \cong \R[P]/(\tr_P).$

If $\chi \neq 1$, then by \eqref{KS-decomp} we have
\begin{equation}\label{equality-coinvariance}
d_{\R[P]}(e_{\chi}M_{\R}) = \rk_{\R} N_{\chi} \cdot \bigg ( \sum_{\psi} r_{\chi, \psi} \cdot d_{\R[P]}(M_{\psi}) \bigg ). 
\end{equation}
The assumption together with \eqref{rk-rational} implies  
$$|P| \cdot d_{\R[P]} (e_{\chi} M_\R) = \rk_\R (e_{\chi} M_\R) =\rk_{\R} N_{\chi} \cdot \bigg ( \sum_{\psi} r_{\chi, \psi} \cdot \rk_{\R} M_{\psi} \bigg ).
$$
Substituting \eqref{equality-coinvariance} into this identity, we obtain
\begin{equation*}
    \sum_{\psi} r_{\chi, \psi} \cdot  d_{\R[P]}(M_{\psi}) \cdot |P| = \sum_{\psi} r_{\chi, \psi} \cdot \rk_{\R}(M_{\psi}). 
\end{equation*}
Since $d_{\R[P]}(M_{\psi}) \cdot |P| \geq \rk_{\R} (M_{\psi})$ for each $\psi$, equality holds for all $\psi$  with $r_{\chi, \psi}\neq 0$.
Hence the surjection $\R[P]^{d_{\R[P]}(M_{\psi})} \to M_{\psi}$, which exists by Nakayama's Lemma, is an isomorphism.
By indecomposability of $M_{\psi}$, it follows that $M_{\psi} \cong \R[P]$. Finally, \eqref{rk-rational} shows that the multiplicity of $N_{\chi} \otimes_{\R} \R[P]$ in $M_{\R}$ is  $(n+1)m_{\chi}$.
\end{proof}

\subsection{Proof of Theorem \ref{theo-main}}\label{sec-A}

Let $G$ be as in the previous subsection.
In addition, assume that $P$ is cyclic.
By Proposition \ref{prop-repre}, we obtain the following theorem.

\begin{theo}[Theorem \ref{theo-main}]\label{theoAgeneral}
Let $K/k$ be a finite Galois extension with $G_{K/k}=G = P \times H$. 
Assume that no infinite place of $k$ is ramified in $K$. 
Then Isomorphism~\eqref{isoM} holds for $K/k$, namely,
\begin{equation*}\tag{M}
\EE_K \cong \Z_p[G]/(\tr_G) \oplus \Z_p[G]^{r_k}
\end{equation*}
if and only if $H^1(P, \EE_K) \cong \Z/|P|\Z$.    
\end{theo}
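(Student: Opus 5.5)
We reduce to Proposition~\ref{prop-repre} with $M=\EE_K$ and $n=r_k$. The rational hypothesis of that proposition holds: by the Dirichlet--Herbrand theorem recalled in the introduction, $\Q_p\otimes\EE_K\cong \Q_p[G]/(\tr_G)\oplus\Q_p[G]^{r_k}$, since no infinite place ramifies. So \eqref{isoM} is equivalent to $\dim_{\F_p}\EE_K/\mm\EE_K=(r_k+1)|H|$. The remaining task is to show that this dimension condition is equivalent to $H^1(P,\EE_K)\cong\Z/|P|\Z$.

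Let $\sigma$ generate $P$. Since $P$ is cyclic, $\mm=(p,\sigma-1)$, so
\[
\EE_K/\mm\EE_K=\big(\EE_K\big)_P/p,
\]
where $(\EE_K)_P=\EE_K/(\sigma-1)\EE_K$ is the module of coinvariants. Its structure is controlled by the sequence
\[
0\to \widehat H^{-1}(P,\EE_K)\to (\EE_K)_P\xrightarrow{\ \tr_P\ }\EE_K^P\to \widehat H^0(P,\EE_K)\to 0 .
\]
Here $\widehat H^{-1}\cong \widehat H^{1}=H^1(P,\EE_K)$ by periodicity for cyclic groups.

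Next we compute ranks. Since $\EE_K$ is a lattice, the torsion of $(\EE_K)_P$ is exactly $\widehat H^{-1}(P,\EE_K)$, because the image under $\tr_P$ is torsion-free. The $\Z_p$-rank of $(\EE_K)_P$ equals $\rk\,\EE_K^P$. From the rational structure, $\Q_p\otimes\EE_K^P\cong \Q_p[H]/(\tr_H)\oplus\Q_p[H]^{r_k}$, which has rank $(r_k+1)|H|-1$. Hence
\[
\dim_{\F_p}\EE_K/\mm\EE_K=(r_k+1)|H|-1+d\big(H^1(P,\EE_K)\big),
\]
where $d$ denotes the minimal number of generators of a finite abelian $p$-group. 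The dimension condition therefore becomes $d(H^1(P,\EE_K))=1$, i.e.\ $H^1(P,\EE_K)$ is a nontrivial cyclic group.

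The main obstacle is closing the gap between ``nontrivial cyclic'' and ``$\cong\Z/|P|\Z$''. Cyclic of order dividing $|P|$ is automatic, since $|P|$ kills the cohomology, but the order could a priori be smaller. Two points must be handled.
\begin{enumerate}
\item For the forward direction, if \eqref{isoM} holds, we compute $H^1(P,\Z_p[G]/(\tr_G))$ directly. The free part contributes nothing. Using Lemma~\ref{lemm-KSD} (after tensoring with $\R$, which is harmless), only the summand $\R[P]/(\tr_P)$ contributes. Its $H^1$ equals $\widehat H^{2}(P,\R)=\R/|P|$, from $0\to\R\to\R[P]\to\R[P]/(\tr_P)\to0$. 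Descending to $\Z_p$ gives $\Z/|P|\Z$.
\item For the converse, we use the nonvanishing of $H^1$ to get $d=1$, so \eqref{isoM} follows from the computation above. Only nontriviality is needed, which holds because $P\neq 1$ gives $\Z/|P|\Z\neq 0$. The case $P=1$ is covered by the remark preceding Proposition~\ref{prop-repre}.
\end{enumerate}
So the genuine work is the coinvariant and rank bookkeeping, in particular checking that the torsion of $(\EE_K)_P$ is exactly $\widehat H^{-1}$ and that $\EE_K^P$ has the stated rank. Once that is in place, both directions follow.
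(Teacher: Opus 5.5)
Your proposal is correct and follows essentially the paper's route: reduce to Proposition~\ref{prop-repre} and establish $\dim_{\F_p}\EE_K/\mm\EE_K=(r_k+1)|H|-1+d\big(H^1(P,\EE_K)\big)$, with the necessity direction computed directly from the standard lattice. The only difference is bookkeeping. The paper obtains the dimension count from the invariants of $E_K/E_K^p$, together with the fact that for a cyclic group a finite module has invariants and coinvariants of equal order. You instead read it off the norm sequence on coinvariants, and get the rank from the rational structure rather than from $[E_K^P:E_F]<\infty$.
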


\begin{proof}
Recall $ \EE_K= \Z_p \otimes E_K$.
The necessity follows from
\begin{equation*}
 H^1(P, E_K) \cong H^1(P, \EE_K)  \cong H^1(P, \Z_p[P]/(\tr_P)) \cong \Z/|P|\Z.
\end{equation*}
Here, the second isomorphism follows from  $\EE_K \cong \Z_p[P]^{(r_k+1)|H|-1} \oplus \Z_p[P]/(\tr_P)$ as $\Z_p[P]$-modules and the $P$-cohomological triviality of $\mb Z_p[P]$.  The third follows from  $$H^1(P, \Z_p[P]/(\tr_P))\cong H^2(P, (\tr_P))\cong \mb Z/|P|\mb Z.$$

It remains to prove the sufficiency.
The long exact sequence in cohomology associated with the sequence 
\begin{equation*}
1\longrightarrow E_{K}\xrightarrow{ \,\,p \,\,} E_{K}\longrightarrow E_{K}/E_{K}^{p}\longrightarrow1
\end{equation*}
yields the exact sequence 
\begin{equation*}
0\longrightarrow E_{K}^P/\left(E_{K}^P\right)^p\longrightarrow (E_{K}/E_{K}^{p})^{P}\longrightarrow H^{1}(P,E_K)[p]\longrightarrow0,
\end{equation*}
where $H^1(P,E_K)[p]$ denotes the subgroup of $H^1(P,E_K)$ consisting of elements annihilated by $p$.

Let $F=K^P$ be the fixed subfield of $P$.
Since $F/k$ is unramified at the infinite places, we have $|E_F/E^p_F|=p^{(r_k+1) \cdot|H|-1}$. 
Since $[E_K^P:E_F]$ is finite (cf. \S \ref{subsec-descent}), it follows that $|E_K^P/(E_K^P)^p|=p^{(r_k+1) \cdot|H|-1}$. 
Moreover, by assumption, we have $H^1(P, E_K)[p] \cong \F_p$, and hence $(E_{K}/E_{K}^{p})^{P} \cong \F^{(r_k+1) \cdot|H|}_p$.

Since $E_{K}/E_{K}^{p}$ is finite and $P$ is cyclic, the groups $(E_{K}/E_{K}^{p})^{P}$ and the coinvariance $(E_{K}/E_{K}^{p})_{P}$ of $E_K/E^p_K$ by $P$ have the same order.
Since $(E_{K}/E_{K}^{p})_{P} \cong \EE_K/\mm \EE_K$, the sufficiency follows from Proposition \ref{prop-repre} and the Dirichlet-Herbrand theorem.
\end{proof}

\begin{coro}
Let $K/k$ be as above and assume that $k$ is either rational or imaginary quadratic. The field $K$ has a local Minkowski unit at $p$ if and only if $H^1(P, E_K) \cong \Z/|P|\Z$.
\end{coro}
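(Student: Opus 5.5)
This is a direct specialization of Theorem~\ref{theoAgeneral}, so the plan is to check that the hypotheses match and then reinterpret Isomorphism~\eqref{isoM} when $r_k=0$.

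\textbf{Step 1: $r_k=0$.} If $k=\Q$ or $k$ is imaginary quadratic, then $k$ has exactly one archimedean place. Dirichlet's unit theorem then gives $r_k=0$.

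\textbf{Step 2: no infinite place ramifies.} This hypothesis of Theorem~\ref{theoAgeneral} is part of the standing assumption ``$K/k$ as above'', so it is available.
\begin{itemize}
\item For $k$ imaginary quadratic it holds automatically, since a complex place cannot ramify.
\item For $k=\Q$ it forces $K$ to be totally real. It is kept as a hypothesis rather than derived.
\end{itemize}

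\textbf{Step 3: apply Theorem~\ref{theoAgeneral}.} With $r_k=0$, Isomorphism~\eqref{isoM} reads $\EE_K \cong \Z_p[G]/(\tr_G)$. By the definition recalled in the introduction, this is exactly the statement that $K/k$, equivalently $K$ since $k$ is fixed, has a local Minkowski unit at $p$. Theorem~\ref{theoAgeneral} says this holds if and only if $H^1(P,\EE_K)\cong \Z/|P|\Z$.

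\textbf{Step 4: pass from $\EE_K$ to $E_K$.} Lemma~\ref{lemma tensor cohomology} gives $H^1(P,\EE_K)\cong \Z_p\otimes H^1(P,E_K)$. Since $P$ is a $p$-group, $H^1(P,E_K)$ is a finite $p$-group. Hence tensoring with $\Z_p$ does not change it, and the condition $H^1(P,\EE_K)\cong \Z/|P|\Z$ is the same as $H^1(P,E_K)\cong \Z/|P|\Z$. This yields the stated equivalence.

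No step is a real obstacle; the only care needed is the identification in Step~4 and the bookkeeping about infinite places in Step~2.
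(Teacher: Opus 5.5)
Your proposal is correct and matches the paper's approach. The paper gives no separate proof: it treats this corollary as the immediate specialization of Theorem~\ref{theoAgeneral} to $r_k=0$. It identifies $H^1(P,E_K)$ with $H^1(P,\EE_K)$ via Lemma~\ref{lemma tensor cohomology}, exactly as in your Steps~1--4.
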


\begin{rema}\label{rema-factor} 
In general, for non-cyclic Galois groups, Brauer–Kuroda type formulas for the quotient of class numbers of subfields (\cite[Thm. A]{BouaLim}) arise as necessary conditions for the existence of a local Minkowski unit, as they determine the factor equivalence class of the unit lattice (cf. \cite{Bartel12, Bartel14, Burns2}).
However, in our setting, for group-theoretic reasons, any two $\Z_p[G]$-lattices with the same self-dual rational representation are factor equivalent, even when $H$ is non-cyclic, by \cite[Prop. 3.9]{Bartel12} and \cite[Cor. 2.12]{Bartel14}. 
In contrast, if $P$ is not cyclic, then the Brauer-Kuroda type formula arises as a necessary condition for the existence of local Minkowski units (cf. \cite[\S 7]{BouaLim}). 
\end{rema}

\begin{rema}\label{rema-Burns}
The factor equivalence class of a lattice~$M$ encodes coarser information than its full $\Z_p[G]$-module structure. However, for abelian extensions of $\Q$ or imaginary quadratic fields, Burns \cite[Thm. 3]{Burns2} showed that the factor equivalence class of the unit lattice, together with its Galois cohomology, suffices for the existence of a local Minkowski unit (cf. \cite[\S 3.2]{BouaLim}). In view of the preceding remark, our result can be viewed as a non-abelian generalization of this phenomenon.
\end{rema}

\begin{rema}
Since $G$ has a cyclic Sylow $p$-subgroup, Proposition~\ref{prop-repre} may be compared with Yakovlev's theorem~\cite{Yakovlev}. The latter requires knowledge of the groups $H^1(J,M)$ for all subgroups $J$ of a Sylow $p$-subgroup, together with the restriction and corestriction maps between them, whereas Proposition~\ref{prop-repre} shows that in our more special setting, it suffices to know only the cohomology of the Sylow $p$-subgroup itself.

An observation of Torzewski for dihedral groups $G = D_{2p}$ \cite[Lem.~3.4.1]{Torzewskithesis} shows that, under the condition $\Q_p \otimes_{\Z_p} M \cong \Q_p[G]/(\tr_G)$, the $\Z_p[D_{2p}]$-structure of $M$ is determined by the full $\F_p[D_{2p}/C_p]$-module structure of $H^1(C_p, M)$. While this shows that our result does not extend to arbitrary non-abelian groups $G$ with a cyclic Sylow $p$-subgroup, it also suggests that for semidirect products $G = P \rtimes H$, where $P$ is cyclic of $p$-power order and $(|H|,p)=1$, the existence of a local Minkowski unit may be governed by the $G/P$-module structure of $H^1(P,E_K)$.
\end{rema}

\begin{rema}
In \cite{HMR24}, a Galois extension $M/L$ is said to have $n$ Minkowski units (at $p$) if the $\F_p[G_{M/L}]$-module $U_M/U_M^p$ admits a free
$\F_p[G_{M/L}]$-direct summand of rank $n$. This notion has important applications to the study of tamely ramified pro-$p$-extensions. Theorem~\ref{theo-main} complements earlier results (\cite{BLM23, Burns4}, \cite[\S 5]{HMR21}) on the existence of Minkowski units (at $p$).
\end{rema}

\section{Proof of Theorem \ref{theo-smallramification}}\label{sec-B}
We retain the notation and assumptions of Theorem~\ref{theo-main}. Recall that $h_k$ denotes the class number of $k$, and that Condition~\eqref{condC} asserts that $H^1(P,U_K) \cong H^1(P,E_K)$.

\begin{theo}[Theorem \ref{theo-smallramification}]\label{theorem final}
Let $K/k$ be as in Theorem \ref{theo-main}, and let $F:=K^P$ denote the fixed field of $K$ by $P$. Assume that Condition \eqref{condC} holds.
\begin{enumerate}
    \item If $K/F$ is unramified, then Isomorphism \eqref{isoM} holds for $K/k$ if and only if $\CC_{K/F} \cong \Z/|P|\Z$. In this case, the capitulation map $Cl_F \to Cl^P_K$ is surjective.

    \item If $K/F$ is ramified at a unique prime of $F$ and $p\nmid h_k$, then the following are equivalent:

     \begin{itemize}
         \item Isomorphism \eqref{isoM} holds for $K/k$;
         \item 
          $\CC_{K/F}=0$;
         \item 
         $\coker(Cl_F\rightarrow Cl_K^P)=0$.
     \end{itemize}
   \end{enumerate}
\end{theo}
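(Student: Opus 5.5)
By Theorem~\ref{theoAgeneral} and Condition~\eqref{condC}, Isomorphism~\eqref{isoM} holds if and only if $H^1(P,U_K)\cong \Z/|P|\Z$. The plan is to compute $H^1(P,U_K)$ through genus theory for the cyclic extension $K/F$ with group $P$. By Lemma~\ref{lemma Iwasawa theorem}, $H^1(P,U_K)\cong P_K^P/P_F$, and it sits in the exact sequence
\begin{equation*}
0\longrightarrow P_K^P/P_F \longrightarrow I_K^P/P_F \longrightarrow Cl_K ,
\end{equation*}
with $I_K^P/I_F\cong \bigoplus_{v} \Z/e_v\Z$, where $v$ runs over the primes of $F$ and $e_v$ is the ramification index. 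Combining this with $0\to I_F/P_F=Cl_F\to I_K^P/P_F$ and chasing the diagram, I expect to obtain
\begin{equation}\label{sequence genus theory}
0\longrightarrow \CC_{K/F}\longrightarrow H^1(P,U_K)\longrightarrow \bigoplus_v \Z/e_v\Z \longrightarrow \coker\big(Cl_F\to Cl_K^P\big)\longrightarrow H^1(P,P_K)\longrightarrow 0 .
\end{equation}
The last term is justified as follows: $Cl_K^P/\mathrm{im}(I_K^P)$ embeds into $H^1(P,P_K)$ via $0\to P_K\to I_K\to Cl_K\to 0$, and $H^1(P,I_K)=0$ since $I_K$ is a sum of induced permutation modules. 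So the cokernel of $I_K^P\to Cl_K^P$ is exactly $H^1(P,P_K)$.

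\emph{Case (i).} If $K/F$ is unramified, then all $e_v=1$ and the sequence gives $H^1(P,U_K)\cong \CC_{K/F}$. This already yields the stated equivalence. For the surjectivity claim, assume \eqref{isoM}. The sequence gives $\coker(Cl_F\to Cl_K^P)\cong H^1(P,P_K)$, and Lemma~\ref{lemma Brauer group} gives $H^1(P,P_K)\cong H^2(P,U_K)=\widehat H^0(P,U_K)$. By Proposition~\ref{unit principal genus theorem}, $|H^2(P,U_K)|=|H^1(P,U_K)|/|P| = 1$, so $H^1(P,P_K)=0$ and the map $Cl_F\to Cl_K^P$ is surjective.

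\emph{Case (ii).} Let $v_0$ be the unique ramified prime. Since $P$ is cyclic and is the inertia group generated there in a cyclic $p$-extension, I need $e_{v_0}=|P|$. This is where $p\nmid h_k$ enters. Because $p\nmid [F:k]$, we have $p\nmid h_F$ up to the $P$-part; more precisely, the inertia subgroup $I_{v_0}\subseteq P$ fixes an intermediate field unramified over $F$, and I would argue that $p\nmid h_F$ rules out nontrivial unramified cyclic $p$-subextensions of $K/F$, forcing $I_{v_0}=P$. To get $p\nmid h_F$ from $p\nmid h_k$, I would check whether this follows from the $H$-action or whether the argument can instead use directly that the maximal unramified subextension of $K/F$ is abelian over $k$ (since $G=H\times P$), hence descends to a $P$-extension of $k$ that is unramified, contradicting $p\nmid h_k$. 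I expect this to be the main delicate point.

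Granting $e_{v_0}=|P|$, Lemma~\ref{lemma Brauer group} and Proposition~\ref{unit principal genus theorem} give
\begin{equation*}
|H^1(P,P_K)|=|H^2(P,U_K)|=|H^1(P,U_K)|/|P| .
\end{equation*}
Taking orders in the exact sequence then gives
\begin{equation*}
|\CC_{K/F}|\cdot |P|\cdot |H^1(P,P_K)| = |H^1(P,U_K)|\cdot |\coker| ,
\end{equation*}
which is automatically consistent: it reduces to $|\CC|=|\coker|$. Hence $\CC_{K/F}=0$ if and only if $\coker=0$.

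It remains to show that $\CC_{K/F}=0$ is equivalent to $H^1(P,U_K)\cong \Z/|P|\Z$. If $\CC_{K/F}=0$, then $H^1(P,U_K)$ injects into $\Z/|P|\Z$. Moreover, its order is $|P|\cdot|H^1(P,P_K)|\ge |P|$, so it is all of $\Z/|P|\Z$. Conversely, if $H^1(P,U_K)\cong\Z/|P|\Z$, then $|H^1(P,P_K)|=1$, so $\coker=0$, and therefore $\CC_{K/F}=0$ by $|\CC|=|\coker|$. Throughout, all groups can be replaced by their $p$-parts, since $P$-cohomology is $p$-primary.
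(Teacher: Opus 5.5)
Your genus-theory sequence, case (i), the identity $|\CC_{K/F}|=|\coker(Cl_F\to Cl_K^P)|$, and the implication $\CC_{K/F}=0\Rightarrow H^1(P,U_K)\cong\Z/|P|\Z$ in case (ii) are all sound. For $e_{v_0}=|P|$, only your second route works: every subextension of $K/F$ is $FN$ with $N\subseteq K^H$, and ramification in $N/k$ persists in $FN/F$ because $p\nmid[F:k]$. Your first idea fails, since $p\nmid h_k$ does not give $p\nmid h_F$.

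\textbf{The gap is the converse in case (ii).} From $H^1(P,U_K)\cong\Z/|P|\Z$ you get $H^1(P,P_K)=0$ and then assert ``so $\coker=0$''. But $H^1(P,P_K)$ is the cokernel of $I_K^P\to Cl_K^P$, not of $Cl_F\to Cl_K^P$. By your own five-term sequence, $\coker(Cl_F\to Cl_K^P)$ is an extension of $H^1(P,P_K)$ by the cokernel of $H^1(P,U_K)\to I_K^P/I_F\cong\Z/|P|\Z$. That extra term is the subgroup generated by the class of the ramified prime $\mathfrak{L}$ of $K$ modulo the image of $Cl_F$. Order counting cannot see it. For instance, take $H^1(P,U_K)\cong\Z/|P|\Z$ mapping to $\Z/|P|\Z$ by multiplication by $p$, with $\CC_{K/F}\cong\coker\cong\Z/p\Z$ and $H^1(P,P_K)=0$; this is consistent with every relation you use. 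So you have not shown that \eqref{isoM} implies $\CC_{K/F}=0$.

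\textbf{The missing step} is a second arithmetic use of $p\nmid h_k$: you need the map $H^1(P,U_K)=P_K^P/P_F\to I_K^P/I_F$ to be surjective. Here is how to get it.
\begin{enumerate}
\item Put $\mathfrak{l}=v_0\cap\calO_k$. The $H$-conjugates of a ramified prime also ramify, so $v_0$ is the only prime of $F$ above $\mathfrak{l}$. Hence $\mathfrak{L}$ is the only prime of $K$ above $\mathfrak{l}$.
\item $K^H/k$ is a cyclic $p$-extension in which $\mathfrak{l}$ is the only ramified prime and is totally ramified. So $p\nmid h_k$ forces $p\nmid h_{K^H}$, by the ambiguous class number formula.
\item Let $\mathfrak{L}_0$ be the prime of $K^H$ above $\mathfrak{l}$. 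Then $\mathfrak{L}_0^{t_0}$ is principal for some $t_0$ prime to $p$, and $\mathfrak{L}_0\calO_K=\mathfrak{L}^{e}$ with $e\mid |H|$.
\item Hence $\mathfrak{L}^{et_0}$ is a principal $P$-invariant ideal whose class generates $I_K^P/I_F$.
\end{enumerate}
This gives the short exact sequence $0\to\CC_{K/F}\to H^1(P,U_K)\to\Z/|P|\Z\to 0$ and the isomorphism $\coker(Cl_F\to Cl_K^P)\cong H^1(P,P_K)\cong H^2(P,U_K)$. All three equivalences then follow at once from Proposition~\ref{unit principal genus theorem}. This is the paper's argument, and with it your counting identity is no longer needed.
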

\noindent

\begin{proof}
Applying $P$-cohomology to the short exact sequence $0\rightarrow P_K \rightarrow I_K\rightarrow Cl_K \rightarrow 0$, we obtain the following exact commutative diagram
$$
\begin{tikzcd}[column sep=2em, row sep=1.5em]
0 \ar{r}&P_F \ar{r} \ar{d} & I_F\ar{d}  \ar{r}& Cl_F  \ar{d} \ar{r}& 0 &\\
 0 \ar{r}&  P_K^P \ar{r} & I_K^P\ar{r} & Cl_K^P\ar{r} &H^1(P, P_K) \ar{r} &0.
\end{tikzcd}
$$
Recall that $H^1(P, U_K)\cong P_K^P/P_F$ and $H^2(P, U_K)\cong H^1(P, P_K)$ by Lemma~\ref{lemma Iwasawa theorem} and Lemma~\ref{lemma Brauer group}, respectively.
Furthermore, we have $I_K^P/I_F \cong \bigoplus_v \mathbb{Z}/e_v\mathbb{Z}$, where $e_v$ denotes the ramification index of $v$ in $K/F$.
Applying the snake lemma to the above diagram yields the exact sequence
	\begin{equation}\label{sequence genus theory}
	\xymatrix@=1pc @C=0.5cm @R=7.5mm{
		&0 \ar[r]& \CC_{K/F}  \ar[r]& H^1(P,U_K) \ar[r] & \bigoplus_{v} \mathbb{Z}/e_v\mathbb{Z} \hspace*{-0.75cm} &  \ar `r[rd]
		`d[ll]
		`l[lllld]
		`d[lll]
		[llld] & \\
		& & \mathrm{coker}(Cl_F \to Cl^P_K) \ar[r] & H^1(P,P_K) \ar[r]& 0. \hspace*{2cm} &  & }
\end{equation}

If $K/F$ is unramified, then, by \eqref{sequence genus theory}, we have 
\begin{equation}\label{equation unramified kernel cokernel}
\CC_{K/F} \cong H^1(P, U_K) \ \ \hbox{ and }\ \  \coker(Cl_F \rightarrow Cl_K^P) \cong H^2(P, U_K).
\end{equation}
Hence, applying Theorem~\ref{theo-main} together with Proposition~\ref{unit principal genus theorem}, we obtain claim (i).
\vskip 5pt
Suppose that exactly one prime $\mathfrak{l}_F$ of $F$ ramifies in $K/F$.
Since $|H|$ and $|P|$ are relatively prime, the cyclic extension $K^{H}/k$ is unramified outside $\mf l:=\mathfrak{l}_F \cap \calO_k$, and $\mathfrak{l}_F$ is the unique prime of $F$ lying above $\mathfrak{l}$. 
As $p\nmid h_k$, $\mathfrak{l}$ is totally ramified in $K^{H}/k$, and hence $\mathfrak{l}_F$ is totally ramified in $K/F$. 
Consequently, there is a unique prime ideal $\mathfrak{L}$ of $\mc O_K$ above $\mathfrak{l}$.
It follows that the class of $\mathfrak{L}$ generates $I_K^P/I_F\cong   \bigoplus_v \mathbb{Z}/e_v\mathbb{Z}=\mathbb{Z}/|P|\mathbb{Z}$.

Since $K^H/k$ is a $p$-extension unramified outside $\mathfrak{l}$, we have $p\nmid h_{K^H}$. Moreover, as $|H|$ is prime to $p$, $\mathfrak{L}^{t} $ is principal for some integer $t$ coprime to $p$.

The class of $\mathfrak{L}^{t}$ generates $I^P_K/I_F$, so the map $H^1(P, U_K) \rightarrow  \bigoplus_v \mathbb{Z}/e_v\mathbb{Z}=\mathbb{Z}/|P|\mathbb{Z}$ appearing in the exact sequence  \eqref{sequence genus theory} is surjective. This gives rise to the short exact sequence 
\begin{equation}\label{sequence partial}
0\longrightarrow \CC_{K/F} \longrightarrow H^1(P, U_K)\longrightarrow \mathbb{Z}/|P|\mathbb{Z} \longrightarrow 0
\end{equation}
and the isomorphism
\begin{equation}\label{isomorphism coker}
\mathrm{coker} (Cl_F\rightarrow Cl_K^P)\cong H^2(P, U_K).
\end{equation}
Combining \eqref{sequence partial},  \eqref{isomorphism coker}, Proposition \ref{unit principal genus theorem} and  Theorem~\ref{theo-main}, we obtain claim (ii).
\end{proof}

\begin{rema}
The assertion in the proof of claim (ii) that $\mathfrak{L}^t$ is principal for some $t \in \N$ with $(t,p)=1$ can also be established by alternative methods when $k$ is $\Q$ or an imaginary quadratic field.
Namely, one may use cyclotomic units in the former case and elliptic units in the latter.
\end{rema}

Theorem \ref{theo-smallramification} yields the following group-theoretic consequence, which will be used in Lemma \ref{assump-F12}. For a profinite group $\mathcal{G}$ and an open subgroup $V$, we write $t(\mathcal{G},V)$ for the kernel of the transfer map $\mathcal{G}^{ab} \to V^{ab}$. Recall that a profinite group is called to be FAb if every open subgroup has finite abelianization.

\begin{coro}\label{prop-tk} 
Let $p$ be an odd prime. Let $\mathcal{G}$ be a FAb pro-$p$-group, and let $V$ be a normal subgroup of~$\mathcal{G}$ such that $\mathcal{G}/V \cong \Z/p^n\Z$ for some $n \in \N$.  
For each $0 \leq i \leq n$, let $V_i$ denote the unique subgroup with $V \subset V_i \subset \mathcal{G}$ and $\mathcal{G}/V_i \cong \Z/p^i\Z$.  
If $t(\mathcal{G},V) \cong \Z/p^n\Z$, then $t(V_i, V) \cong \Z/p^{n-i}\Z$ holds for every $0 \leq i \leq n$.   
\end{coro}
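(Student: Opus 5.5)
The plan is to translate the group-theoretic statement into number theory and apply Theorem~\ref{theo-smallramification}(i), together with Lemma~\ref{remark subfield Minkowski}, to the intermediate layers. The natural realization is as a class field tower. Take $k=\Q$ or, better, a base with trivial $p$-part of the unit rank. We want a number field $F$ with $p\nmid [F:k]$ and trivial sub-structure, such that the Galois group of the maximal unramified $p$-extension of $F$ is $\mathcal{G}$. Then the unramified cyclic extension $K/F$ corresponds to $V$, and the layers $K_i$ correspond to $V_i$.

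By Artin reciprocity, the transfer $\mathcal{G}^{ab}\to V_i^{ab}$ corresponds to the capitulation map $Cl_F[p^\infty]\to Cl_{K_i}[p^\infty]$. Hence $t(\mathcal{G},V)\cong \CC_{K/F}[p^\infty]$ and $t(V_i,V)\cong\CC_{K/K_i}[p^\infty]$. The realization step is the obstacle: an arbitrary FAb pro-$p$ group need not be a $p$-class field tower group.

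I therefore expect the authors' route to be purely group-theoretic, mimicking the proof rather than invoking the fields. The cleaner plan is to prove the group statement directly by abstracting the cohomological argument, with $E_K$ replaced by a suitable module. Concretely, let $\Gamma=\mathcal{G}/V\cong\Z/p^n\Z$. From the Hochschild--Serre / five-term sequence for $1\to V\to\mathcal{G}\to\Gamma\to1$ one gets
\begin{equation*}
H_2(\mathcal{G})\to H_2(\Gamma)\to V^{ab}_\Gamma\to \mathcal{G}^{ab}\to\Gamma\to 0 .
\end{equation*}
This is the homological analogue of the genus sequence \eqref{sequence genus theory} in the unramified case. Here $H_2(\Gamma)=0$ since $\Gamma$ is cyclic, and the transfer kernel is governed by the Tate cohomology $\widehat H^{-1}(\Gamma,V^{ab})$ and $\widehat H^0(\Gamma,V^{ab})$. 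This is exactly as in \eqref{equation unramified kernel cokernel}, where $\CC\cong H^1(P,U_K)$ and the cokernel is $\cong H^2(P,U_K)$.

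The analogue of the unit lattice is a $\Z_p[\Gamma]$-lattice $M$ fitting into the relation-module (Gruenberg) sequence. FAb makes $V^{ab}$ finite, so $\Q_p\otimes M$ has the shape $\Q_p[\Gamma]/(\tr_\Gamma)\oplus\Q_p[\Gamma]^{m}$, mirroring Dirichlet--Herbrand with $H=1$. The hypothesis $t(\mathcal{G},V)\cong\Z/p^n\Z$ becomes $H^1(\Gamma,M)\cong\Z/|\Gamma|\Z$. Proposition~\ref{prop-repre}, via Theorem~\ref{theoAgeneral} with $H$ trivial, then yields $M\cong\Z_p[\Gamma]/(\tr_\Gamma)\oplus\Z_p[\Gamma]^m$. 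Restricting to the subgroup $\Gamma_i=V_i/V\cong\Z/p^{n-i}\Z$ gives $M\cong\Z_p[\Gamma_i]/(\tr_{\Gamma_i})\oplus(\text{free})$ as a $\Z_p[\Gamma_i]$-module. Thus $H^1(\Gamma_i,M)\cong\Z/p^{n-i}\Z$, and translating back gives $t(V_i,V)\cong\Z/p^{n-i}\Z$.

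The main obstacle is setting up $M$ so that the translation is exact. The FAb hypothesis ensures finiteness, replacing the finiteness of class groups. Oddness of $p$ is needed to avoid the $p=2$ subtleties analogous to roots of unity and Condition~\eqref{condC}. If the abstract construction proves awkward, I would fall back on realizing $\mathcal{G}$ (or a sufficiently large finite quotient, which suffices since only finitely many transfers are involved) as an unramified Galois group over a suitable number field, and then apply Theorem~\ref{theo-smallramification}(i) and Lemma~\ref{remark subfield Minkowski} layer by layer.
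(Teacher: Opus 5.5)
\textbf{Verdict: there is a genuine gap.} Neither of your two routes, as written, proves the statement.

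\textbf{The abstract route.} The lattice $M$ is never constructed. The three facts everything rests on are asserted, not derived: $\Q_p\otimes M\cong\Q_p[\Gamma]/(\tr_\Gamma)\oplus\Q_p[\Gamma]^m$; $H^1(\Gamma,M)\cong t(\mathcal G,V)$; and, for the ``translating back'' step, $H^1(\Gamma_i,M)\cong t(V_i,V)$ for every $i$. The heuristics you give for them also point the wrong way.
\begin{itemize}
\item In the analogy with \eqref{equation unramified kernel cokernel}, $V^{ab}$ plays the role of the class group, not of $U_K$. The transfer kernel is not determined by $\widehat H^{-1}(\Gamma,V^{ab})$ and $\widehat H^{0}(\Gamma,V^{ab})$. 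For example, take $\mathcal G=\Z/p^2\Z$ and $\mathcal G=(\Z/p\Z)^2$ with $|V|=p$: the $\Gamma$-module $V^{ab}$ is the same, but $t(\mathcal G,V)$ is $\Z/p\Z$ in the first case and $(\Z/p\Z)^2$ in the second.
\item Finiteness of $V^{ab}$ does not produce the rational shape you claim. Take a free pro-$p$ presentation $1\to R\to\mathcal F\to\mathcal G\to 1$ with $\mathcal F$ of rank $d$, and let $\mathcal U$ be the preimage of $V$. The natural relation lattice $R\,[\mathcal U,\mathcal U]/[\mathcal U,\mathcal U]$ has type $\Q_p\oplus\Q_p[\Gamma]^{d-1}$ by Gasch\"utz, and $t(V_i,V)$ appears as its $\widehat H^0(\Gamma_i,\cdot)$, not as an $H^1$.
\end{itemize}
The missing step can be supplied as follows; nothing in it uses that $p$ is odd, but none of it is in your proposal, and it is the whole content of the step you yourself flag as ``the main obstacle''.
\begin{itemize}
\item Put $\bar{\mathcal G}=\mathcal G/[V,V]$, which is finite by FAb. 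The Crowell module $X=\Z_p[\Gamma]\otimes_{\Z_p[\bar{\mathcal G}]}I_{\bar{\mathcal G}}$, with $I$ the augmentation ideal, is an extension of $I_\Gamma$ by $V^{ab}$.
\item It satisfies $X_\Gamma=\mathcal G^{ab}$ and $X^\Gamma=(V^{ab})^\Gamma$, and its norm map is the transfer.
\item The restriction of $X$ to $\Gamma_i$ is the Crowell module of $V_i/[V,V]$ plus a free module. Hence $t(V_i,V)\cong\widehat H^{-1}(\Gamma_i,X)$ for all $i$.
\item The relation lattice is a first syzygy of $X$. One more syzygy gives a lattice $M$ with $\Q_p\otimes M\cong\Q_p[\Gamma]/(\tr_\Gamma)\oplus\Q_p[\Gamma]^m$ and $H^1(\Gamma_i,M)\cong t(V_i,V)$ for all $i$.
\item From there, your argument via Proposition~\ref{prop-repre} and restriction to $\Gamma_i$ goes through.
\end{itemize}

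\textbf{The fallback route.} This is essentially the paper's proof, but you reject its key input instead of supplying it. The paper first reduces to the finite group $\mathcal G/[V,V]$, using $t(\mathcal G,V)=t(\mathcal G/[V,V],V/[V,V])$. It then invokes \cite{HMR24}, building on \cite{Ozaki3}: this finite $p$-group is the Galois group of the \emph{full} $p$-class field tower of some $p$-extension $L/\Q$. Both features of that realization matter.
\begin{itemize}
\item Realizing the group merely as some unramified Galois group would not let Artin reciprocity identify $t(\mathcal G,V)$ and $t(V_i,V)$ with the capitulation kernels $\CC_{L_n/L}$ and $\CC_{L_n/L_i}$. For that, the top field must be the Hilbert $p$-class field of $L_n$.
\item Since $[L_n:\Q]$ is a power of $p$ and $p$ is odd, $\zeta_p\notin L_n$. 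This is what guarantees Condition~\eqref{condC}, and it is a place where the oddness of $p$ is genuinely needed. It is not a vague ``$p=2$ subtlety''.
\end{itemize}
The argument then runs as follows.
\begin{itemize}
\item Theorem~\ref{theo-smallramification}(i) with $H=1$ gives \eqref{isoM} for $L_n/L$.
\item Restricting to the subgroup $P_i=G_{L_n/L_i}$ gives $\EE_{L_n}\cong\Z_p[P_i]^{r_{L_i}}\oplus\Z_p[P_i]/(\tr_{P_i})$. This is restriction to a subgroup; Lemma~\ref{remark subfield Minkowski} concerns the quotients $L_i/L$ and is not what is needed here.
\item Theorem~\ref{theo-smallramification}(i) applied to $L_n/L_i$ then yields $t(V_i,V)\cong\CC_{L_n/L_i}\cong\Z/p^{n-i}\Z$.
\end{itemize}
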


\begin{proof}
We may assume that $\mathcal{G}$ is finite, because we have 
$$t(\mathcal{G},V) \cong t\bigg( \frac{\mathcal{G}}{[V,V]}\,,\, \frac{V}{[V,V]}\,\,\bigg), \quad t(V_i, V) \cong t\bigg(\frac{V_i}{[V,V]}, \frac{V}{[V,V]}\bigg),$$
and the quotient $\mathcal{G}/[V,V]$ is finite. By the main theorem of \cite{HMR24}, building on earlier work of \cite{Ozaki3}, there exists a $p$-extension~$L/\Q$ whose $p$-Hilbert class field tower has Galois group isomorphic to~$\mathcal{G}$.  
Let $L_i$ denote the unramified cyclic $p$-extension of $L$ corresponding to~$V_i$.  
Then, by Artin reciprocity law, we have $\CC_{L_n/L} \cong \Z/p^n\Z$. Applying Theorem \ref{theo-smallramification} (i) (for the case $H=1$) to the unramified extension $L_n/L$ with $G_{L_n/L} \cong \mathcal{G}/V$, which we denote by $P$, one verifies that
\[
    \EE_{L_n} \cong \Z_p[P]^{r_L} \oplus \Z_p[P]/(\tr_P), 
\]
as $\Z_p[P]$-modules.
In particular, as $\Z_p[P_i]$-modules, where we put $P_i:= G_{L_n/L_i} \cong V_i/V$, we have
\[
    \EE_{L_n} \cong \Z_p[P_i]^{r_{L_i}} \oplus \Z_p[P_i]/(\tr_{P_i}).
\] 

Consequently, one infers 
\[
   t(V_i, V) \cong  \CC_{L_n/L_i} 
    \cong H^1(P_i, U_{L_n}) 
    \cong H^1(P_i, E_{L_n} ) 
    \cong \Z/p^{n-i}\Z
\]
by Theorem~\ref{theo-smallramification} (i) again.
\end{proof}

\section{Condition \eqref{condC}  without acyclicity}\label{sec-non-cohotri}

Theorem~\ref{theo-smallramification} relies on Condition~\eqref{condC}, which is typically guaranteed by $\zeta_p \notin K$ but need not hold in general (cf. Remark \ref{rema-gcu} (iii)). The purpose of this section is to establish Condition~\eqref{condC} even when $\mu_{K,p}$ is not acyclic. We first prove in \S \ref{sec-proofC} that Condition~\eqref{condC} follows from certain assumptions of Theorem \ref{theo-smallramification} together with an additional Galois descent hypothesis (Theorem \ref{theo-removingC}). We then show in \S 5.2 that this Galois descent condition is satisfied for $K/F$ when $r_k=0$ and $\zeta_p \in k$, thereby removing the descent hypothesis further. 
Consequently, in this setting, Isomorphism~\eqref{isoM} holds  (Corollary \ref{theoB-ext}).

\subsection{Verification of Condition (C) under Galois descent}\label{sec-proofC}

We retain the notation $K/k$, $P$, and $H$ from Section \ref{sec-B}. Write $|P| = p^n$ for some $n \in \N$. 
Let $F_i$ denote the unique subextension of $K/F$ with $G_{F_i/F}  \cong \Z/p^i\Z$, and let $k_i$ denote the unique subextension of $K^H/k$ with $G_{k_i/k}  \cong \Z/p^i\Z$.

\begin{theo}\label{theo-removingC}
Assume that $(\EE_K)^P = \EE_F$, and that either
\begin{itemize}
\item[(a)] $K/F$ is unramified with $\CC_{K/F} \cong \Z/|P|\Z$, or
\item[(b)] $K/F$ is ramified at a unique prime of $F$,   $p\nmid h_k$, and $\CC_{K/F} = 0$.
\end{itemize}
Then Condition~\eqref{condC}, and hence Isomorphism~\eqref{isoM}, hold in the following cases:
\begin{enumerate}
\item If $|P| = p$, 
\item If $|P| \geq p^2$ and $|\mu_{K,p}|=p$.
\end{enumerate}
\end{theo}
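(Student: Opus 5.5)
We will show that the natural map $H^1(P,\Z_p\otimes U_K)\to H^1(P,\EE_K)$ is an isomorphism. Then Condition~\eqref{condC} holds, and Isomorphism~\eqref{isoM} follows from Theorem~\ref{theorem final}. By Lemma~\ref{lemma tensor cohomology}, all groups may be taken $p$-adically.

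\textbf{Step 1: the long exact sequence.} Apply $P$-cohomology to $0\to\mu_{K,p}\to\Z_p\otimes U_K\to\EE_K\to0$. This gives
\[
\EE_K^P\to H^1(P,\mu_{K,p})\xrightarrow{\alpha} H^1(P,\Z_p\otimes U_K)\xrightarrow{\beta} H^1(P,\EE_K)\xrightarrow{\delta} H^2(P,\mu_{K,p}).
\]
By \eqref{remark unit lattice galois fixed}, the descent hypothesis $(\EE_K)^P=\EE_F$ says exactly that $\alpha$ is injective. So the image of $H^1(P,\mu_{K,p})$ sits inside $H^1(P,U_K)$, and $\beta$ is injective precisely when $H^1(P,\mu_{K,p})=0$. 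The goal is therefore to show that $H^1(P,\mu_{K,p})$ and $\delta$ are harmless, and to do this by order counting.

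\textbf{Step 2: compute $|H^1(P,U_K)|$ from (a) or (b).} In case (a), \eqref{equation unramified kernel cokernel} gives $H^1(P,U_K)\cong\CC_{K/F}\cong\Z/|P|\Z$. In case (b), the sequence \eqref{sequence partial} with $\CC_{K/F}=0$ gives $H^1(P,U_K)\cong\Z/|P|\Z$. Neither derivation used Condition~\eqref{condC}. So in both cases $H^1(P,\Z_p\otimes U_K)$ is cyclic of order $p^n$.

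\textbf{Step 3: show $H^1(P,\EE_K)\cong\Z/|P|\Z$.} Since $\EE_K$ is a lattice whose rational representation is known, the Herbrand quotient (computed from $\Q_p\otimes\EE_K$) gives
\[
|\widehat H^0(P,\EE_K)|/|H^1(P,\EE_K)|=1/p^n .
\]
Also $\widehat H^0(P,\EE_K)=\EE_F/N_{K/F}\EE_K$ by descent. Compare this with the corresponding data for $U_K$ from Proposition~\ref{unit principal genus theorem}, and with the cohomology of the finite cyclic module $\mu_{K,p}$, for which $|H^1|=|H^2|$. Write $\mu_{K,p}\cong\Z/p^m$ with $P$ acting through a character.
\begin{itemize}
\item[(i)] If $|P|=p$: $H^1(P,\mu_{K,p})$ and $H^2(P,\mu_{K,p})$ are both killed by $p$ and of order at most $p$. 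Injectivity of $\alpha$ together with cyclicity of $H^1(P,U_K)$ then forces $H^1(P,\EE_K)$ to be cyclic of order $p$ or $1$. Herbrand (order at least $p$) excludes order $1$.
\item[(ii)] If $|P|\ge p^2$ and $|\mu_{K,p}|=p$: again $|H^i(P,\mu_{K,p})|\le p$. The exact sequence shows that $H^1(P,\EE_K)$ is an extension of a subgroup of $\Z/p$ by a quotient of $\Z/p^n$ by a group of order at most $p$. Hence $H^1(P,\EE_K)$ is $\Z/p^n$, $\Z/p^{n-1}$, or $\Z/p^{n-1}\oplus\Z/p$.
\end{itemize}
The remaining task is to rule out the last two options, and to show that $H^1(P,\mu_{K,p})$ maps trivially, or else that $\beta$ is an isomorphism.

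\textbf{Step 4: the main obstacle.} The hard step is controlling $\delta$ and the image of $\alpha$ precisely. I would first try the norm-map description. Using descent, $\widehat H^0(P,\EE_K)=\EE_F/N\EE_K$ and $\widehat H^0(P,U_K)=U_F/NU_K$. Comparing these through the sequence for $\mu$ in degree $0$ gives $\widehat H^0(P,\EE_K)$ as a quotient of $\widehat H^0(P,U_K)$ by the image of $\widehat H^0(P,\mu_{K,p})$. Combining this order with Herbrand's equality $|H^1(P,\EE_K)|=p^n|\widehat H^0(P,\EE_K)|$ and with Proposition~\ref{unit principal genus theorem} for $U_K$ should pin down $|H^1(P,\EE_K)|=p^n$. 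In case (ii) we also need cyclicity. For this, restrict to the subgroup of order $p$ and use that $\mu_{K,p}$ of order $p$ is fixed by any $p$-subgroup acting on it, which makes the relevant connecting maps explicit. Once $H^1(P,\EE_K)\cong\Z/|P|\Z\cong H^1(P,U_K)$, Condition~\eqref{condC} holds, and Theorem~\ref{theo-main} gives \eqref{isoM}.
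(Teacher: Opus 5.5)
\textbf{There is a genuine gap, in case (ii).}

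Your case (i) is fine. You get the lower bound $|H^1(P,\EE_K)|\ge p$ from the Herbrand quotient, whereas the paper gets surjectivity onto $H^2(P,\mu_{K,p})$ from $H^2(P,U_K)=0$; either works. In case (ii), your Step 4 correctly gives $\widehat H^0(P,\EE_K)=0$ and hence $|H^1(P,\EE_K)|=p^n$.

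The only nontrivial point of case (ii) is ruling out $H^1(P,\EE_K)\cong\Z/p^{n-1}\Z\oplus\Z/p\Z$, and this is where the proposal stops. Writing ``restrict to the subgroup of order $p$ \dots\ which makes the relevant connecting maps explicit'' is not an argument. You do not say which map would distinguish the cyclic case from the split one. Moreover, any usable information at the level of $P_1=G_{K/F_{n-1}}$ (the groups $H^i(P_1,U_K)$, or descent for $K/F_{n-1}$) needs arithmetic input on intermediate layers, which you never supply.

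The framing in Steps 1 and 4 is also wrong. In case (ii), $P$ acts trivially on $\mu_{K,p}\cong\Z/p\Z$, so $H^1(P,\mu_{K,p})\cong\Z/p\Z$. Descent makes $\alpha$ injective, so $\ker\beta\cong\Z/p\Z$. Hence neither ``$\beta$ is an isomorphism'' nor ``$H^1(P,\mu_{K,p})$ maps trivially'' can hold. Condition~(C) only asks for an abstract isomorphism.

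\textbf{How the paper fills the gap.} It descends to $F_2/F$, with $\Gamma=G_{F_2/F}$ and $\Gamma'=G_{F_2/F_1}$.
\begin{itemize}
\item Lemma~\ref{assump-F12}, which in case (a) rests on Corollary~\ref{prop-tk}, gives $H^1(\Gamma,U_{F_2})\cong\Z/p^2\Z$, $H^1(\Gamma',U_{F_2})\cong\Z/p\Z$, and the vanishing of the corresponding $H^2$ groups.
\item A deflation diagram shows that a split $H^1(P,E_K)$ would force $\widehat H^{-1}(\Gamma,\EE_{F_2})\cong(\Z/p\Z)^2$.
\item After proving $|H^1(\Gamma',\EE_{F_2})|=p$, this contradicts Lemma~\ref{lemma-p^2}.
\end{itemize}

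\textbf{An alternative fix at level $P$, from what you already have.}
\begin{itemize}
\item Since $\widehat H^0(P,\EE_K)=0$, duality for lattices gives $H^2(P,\Hom_{\Z_p}(\EE_K,\Z_p))=0$.
\item Hence the class of $0\to\mu_{K,p}\to\Z_p\otimes U_K\to\EE_K\to0$ in $H^1(P,\Hom_{\Z_p}(\EE_K,\mu_{K,p}))$ lifts to $H^1(P,\Hom_{\Z_p}(\EE_K,\Z_p))$. In other words, this extension is the pushout of an extension of $\EE_K$ by $\Z_p$ along $\Z_p\twoheadrightarrow\mu_{K,p}$.
\item By naturality, $\delta$ factors as $\widehat H^{-1}(P,\EE_K)\to\widehat H^0(P,\Z_p)=\Z/p^n\Z\to\Z/p\Z$, where the second map is reduction mod $p$. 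So $\delta$ kills every class of order dividing $p^{n-1}$.
\item $\delta$ is onto, because $H^2(P,U_K)=0$. Therefore $\widehat H^{-1}(P,\EE_K)$ contains an element of order $p^n$, and since its order is $p^n$, it is cyclic.
\end{itemize}
Some argument of one of these two kinds is needed; as written, Step 4 does not prove case (ii).
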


\begin{rema}\label{rema-gcu}
\begin{enumerate}
\item Recall that the condition $(\EE_K)^P = \EE_F$ is necessary for Isomorphism \eqref{isoM}, and is therefore indispensable. 
\item The proof of Theorem~\ref{theo-smallramification} already shows, without using Condition~\eqref{condC}, that under the assumptions of Theorem~\ref{theo-removingC}, one has $H^1(P,U_K)\cong \Z/|P|\Z$.
\item Condition~\eqref{condC} does not hold in general even when Isomorphism~\eqref{isoM} holds. For example, this is the case for any real quadratic field $K$ with $N_{K/\Q}(U_K)=1$.

\end{enumerate}    
\end{rema}

The proof of Theorem~\ref{theo-removingC} relies on Lemma~\ref{assump-F12} and Lemma~\ref{lemma-p^2}.

\begin{lemm}\label{assump-F12}
Let $K/F$ be as in Theorem \ref{theo-main}.
\begin{enumerate}
\item Under the assumptions of case~(a) of Theorem~\ref{theo-removingC}, we have $\CC_{F_i/F} \cong \Z/p^i\Z$. 
If moreover $p$ is odd, then $\CC_{K/F_i} \cong \Z/p^{n-i}\Z$ for each $0 \le i \le n$.
\item Under the assumptions of case (b) of Theorem~\ref{theo-removingC}, we have  $\CC_{F_i/F} = 0$ and  $\CC_{K/F_i}=0$ for each $i$.
\end{enumerate}
\end{lemm}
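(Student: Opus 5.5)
For case~(a) I plan to get the bottom layer by squeezing $|\CC_{F_i/F}|$ between $p^i$ and $p^i$, and to get the top layer from Corollary~\ref{prop-tk}. For case~(b) I plan to combine Chevalley's ambiguous class number formula with a fixed-point argument for $p$-groups.

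\emph{Case (a), first claim.} Since $F_i/F$ is unramified and cyclic of degree $p^i$, the exact sequence \eqref{sequence genus theory} gives $\CC_{F_i/F}\cong H^1(G_{F_i/F},U_{F_i})$, as in \eqref{equation unramified kernel cokernel}. Proposition~\ref{unit principal genus theorem} then yields $|\CC_{F_i/F}|=p^i\,|H^2(G_{F_i/F},U_{F_i})|\ge p^i$.

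For the upper bound:
\begin{itemize}
\item By transitivity, $\CC_{F_i/F}\subset \CC_{K/F}\cong\Z/p^n\Z$, so $\CC_{F_i/F}$ is cyclic.
\item The composite $N_{F_i/F}\circ j_{F\to F_i}$ on $Cl_F$ is multiplication by $p^i$, so $\CC_{F_i/F}\subset Cl_F[p^i]$.
\item A subgroup of a cyclic $p$-group killed by $p^i$ has order at most $p^i$.
\end{itemize}
Hence $\CC_{F_i/F}\cong\Z/p^i\Z$.

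\emph{Case (a), second claim ($p$ odd).} Let $\mathcal G$ be the Galois group of the $p$-Hilbert class field tower of $F$. It is FAb because every open subgroup corresponds to a number field with finite class group. The field $K$, being an unramified cyclic $p$-extension (unramified also at infinity), corresponds to a normal subgroup $V$ with $\mathcal G/V\cong\Z/p^n\Z$, and $F_i$ corresponds to $V_i$.

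By Artin reciprocity, $t(\mathcal G,V)$ is the capitulation kernel on $Cl_F[p^\infty]$. This is all of $\CC_{K/F}$, because $\CC_{K/F}$ is killed by $p^n$ (norm argument) and hence is a $p$-group; so $t(\mathcal G,V)\cong\Z/p^n\Z$. Corollary~\ref{prop-tk} then gives $t(V_i,V)\cong\Z/p^{n-i}\Z$. Since $\CC_{K/F_i}$ is killed by $p^{n-i}$, it equals its $p$-part $t(V_i,V)$, which gives the claim.

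\emph{Case (b).} The proof of Theorem~\ref{theorem final}(ii) shows that the ramified prime is totally ramified in $K/F$. Therefore it is totally ramified in every $F_j/F_i$, and it is the only ramified place. Transitivity immediately gives $\CC_{F_i/F}\subset\CC_{K/F}=0$.

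The main step is $\CC_{K/F_i}=0$.
\begin{enumerate}
\item Apply Chevalley's formula to the cyclic extension $F_i/F$ with group $G_i=G_{F_i/F}$. A unit of $F$ is a local norm at every unramified place. By the product formula it is then also a local norm at the unique ramified place, and by the Hasse norm theorem it is a global norm. So the unit index in the formula is $1$. Since the ramification index $p^i$ cancels against $[F_i:F]$, we get $|Cl_{F_i}^{G_i}|=|Cl_F|$.
\item Because $\CC_{F_i/F}=0$, the map $j:Cl_F\to Cl_{F_i}^{G_i}$ is injective. By step~(i) it is therefore an isomorphism, so $Cl_{F_i}^{G_i}=j(Cl_F)$.
\item $\CC_{K/F_i}$ is a $G_i$-stable subgroup (since $K/F$ is Galois) and is a $p$-group (killed by $p^{n-i}$). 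Suppose it is nontrivial. Since $G_i$ is a $p$-group acting on a nontrivial $p$-group, $\CC_{K/F_i}$ contains a nonzero $G_i$-fixed element. By step~(ii) this element has the form $j(a)$ with $a\in Cl_F$, and then $a\in\CC_{K/F}=0$, a contradiction.
\end{enumerate}

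I expect step~(i) to be the main obstacle. It requires checking carefully that Chevalley's formula applies with a single totally ramified finite prime and unramified infinite places, and that the product-formula argument makes every unit a norm.
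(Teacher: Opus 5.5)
Your proposal is correct and follows the paper's proof. In case (a) you use the same squeeze ($|\CC_{F_i/F}|\ge p^i$ from the unit principal genus theorem, and $\le p^i$ because it is a $p^i$-torsion subgroup of the cyclic group $\CC_{K/F}$) together with Corollary~\ref{prop-tk}. In case (b) you use the same argument that $(\CC_{K/F_i})^{G_{F_i/F}}=0$ because $Cl_{F_i}^{G_{F_i/F}}$ is the image of $Cl_F$, followed by the fixed-point lemma for $p$-groups.

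The only difference is how you get $|Cl_{F_i}^{G_{F_i/F}}|=|Cl_F|$: you use Chevalley's ambiguous class number formula, with unit norm index $1$ from the product formula and the Hasse norm theorem. The paper instead reapplies the genus sequence from the proof of Theorem~\ref{theorem final}(ii) to $F_i/F$, where $\CC_{F_i/F}=0$ forces the cokernel $H^2(G_{F_i/F},U_{F_i})$ to vanish. This is the same genus-theoretic input, and your version of the step you flagged as the main obstacle goes through exactly as you describe.
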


\begin{proof}

(i) By Proposition \ref{unit principal genus theorem} and \eqref{equation unramified kernel cokernel}, we have $p^i \mid |\CC_{F_i/F}|$. 
Since $\CC_{F_i/F}$ is a subgroup of the cyclic group $\CC_{K/F}$ and has exponent dividing $p^i$, it follows that $\CC_{F_i/F} \cong \Z/p^i\Z$. 
The second claim follows from Corollary \ref{prop-tk}.

(ii)   Taking $G_{F_i/F}$-invariants of the exact sequence
\begin{equation*}
    0 \longrightarrow \CC_{K/F_i} \longrightarrow Cl_{F_i} \longrightarrow Cl_K^{G_{K/F_i}},
\end{equation*}
we obtain the exact sequence
\begin{equation*}
    0 \longrightarrow \big ( \CC_{K/F_i} \big )^{G_{F_i/F}} \longrightarrow Cl^{G_{F_i/F}}_{F_i} \longrightarrow Cl^{G_{K/F}}_K.
\end{equation*}
By Theorem \ref{theo-smallramification} (ii) and its proof, we have $Cl_F \cong Cl_{F_i}^{G_{F_i/F}}$ and $Cl_F \cong Cl_K^{G_{K/F}}$, i.e., $\CC_{F_i/F} = 0,\;$  $Cl_{F_i}^{G_{F_i/F}} \cong  Cl_K^{G_{K/F}}$ and hence $\big ( \CC_{K/F_i}\big)^{G_{F_i/F}}=0$.
Since both $\CC_{K/F_i}$ and  $G_{F_i/F}$ are $p$-groups, we conclude that   $\CC_{K/F_i}=0$.
\end{proof}

\begin{rema}\label{rema-modifiedgenus}
   The second claim of Lemma \ref{assump-F12} (i)  can also be proved by a variant of genus theory developed in the forthcoming work \cite{LeeWan}.
    Using this, one obtains $H^2(G_{K/F_i}, U_K) \cong H^1(G_{K/F_i}, V_K)$ where $V_K$ denotes the closed subgroup generated by inertia subgroups at all primes of $K$ in   $(G_{\bar{K}/K})^{\mathrm{ab}}$.
    Then one can prove $H^1(G_{K/F_i}, V_K) = 0$ by the Hochschild-Serre spectral sequence.
    This method does not depend on Theorem~\ref{theo-smallramification}, in particular, it applies when $p=2$. See \cite{LeeWan} for further details.
 \end{rema}

Let $\Gamma \cong \Z/p^2\Z$, and fix a generator $\gamma$ of $\Gamma$. We have the following observation from the classification of indecomposable $\Z_p[\Gamma]$-lattices \cite{BermanGudivok, HelRei1}. Let $\Z_p[\zeta_p]$ and $\Z_p[\zeta_{p^2}]$ denote the indecomposable $\Z_p[\Gamma]$-lattices on which $\gamma$ acts by multiplication by $\zeta_p$ and $\zeta_{p^2}$, respectively. We denote by $\Gamma'$ the subgroup of $\Gamma$ of order $p$.

\begin{lemm}\label{lemma-p^2}
Let $M$ be a $\Z_p[\Gamma]$-lattice such that $\Q_p \otimes_{\Z_p} M \cong \Q_p[\Gamma]^s \oplus \Q_p[\Gamma]/(\tr_{\Gamma})$ for some $s \geq 0$, and assume $|H^1(\Gamma, M)|=p^2$ and $|H^1(\Gamma', M)| =p$. Then we have $M \cong \Z_p[\Gamma]^s \oplus \Z_p[\Gamma]/(\tr_{\Gamma}).$
\end{lemm}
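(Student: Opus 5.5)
We first reduce to a statement about indecomposable summands. By the Heller--Reiner / Berman--Gudivok classification of indecomposable $\Z_p[\Gamma]$-lattices for $\Gamma\cong\Z/p^2\Z$, there are finitely many indecomposables (the four irreducibles $\Z_p$, $\Z_p[\zeta_p]$, $\Z_p[\zeta_{p^2}]$, plus non-split extensions among them, including $\Z_p[\Gamma/\Gamma']$, $\Z_p[\Gamma]/(\tr_\Gamma)$, $\Z_p[\Gamma]$, and a few others). Write a Krull--Schmidt decomposition $M=\bigoplus_i M_i$. The rational hypothesis says $\Q_p\otimes M$ contains the trivial character $s$ times, the character $\Q_p(\zeta_p)$ (degree $p-1$) $s$ times, and $\Q_p(\zeta_{p^2})$ (degree $p(p-1)$) exactly $s+1$ times. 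So in $M$ the total multiplicity of $\Q_p(\zeta_{p^2})$ exceeds that of the trivial character by exactly one.

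Next we make a table, for each indecomposable $L$ in the list, of the triple: its rational character content (multiplicities $a_1,a_p,a_{p^2}$ of the three irreducible rational characters) together with $|H^1(\Gamma,L)|$ and $|H^1(\Gamma',L)|$. This is a finite computation using the explicit matrix descriptions (each indecomposable is a pullback/extension of irreducibles), and for cyclic groups $H^1$ is computed as $\ker \tr/(\gamma-1)L$. Key entries: $\Z_p[\Gamma]$ gives $(1,1)$; $\Z_p[\Gamma]/(\tr_\Gamma)$ gives $(p^2,p)$; $\Z_p[\zeta_{p^2}]$ gives $(p,p)$; $\Z_p[\zeta_{p^2}]\oplus$-type extensions not containing the trivial character give nontrivial $H^1(\Gamma',-)$, etc. Since cohomology is additive, $|H^1(\Gamma,M)|=\prod|H^1(\Gamma,M_i)|$ and likewise for $\Gamma'$.

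The core step is a counting/inequality argument: we would like to show that for every indecomposable $L$, $|H^1(\Gamma',L)|\ge p^{\,a_{p^2}(L)-a_1(L)}$ and $|H^1(\Gamma,L)|\ge p^{2(a_{p^2}(L)-a_1(L))}$-type bounds, with additionally every non-free indecomposable contributing at least one factor of $p$ to $H^1(\Gamma',-)$ or to $H^1(\Gamma,-)$ beyond what its excess $a_{p^2}-a_1$ forces. Summing, the excess is exactly $1$, so $H^1(\Gamma',M)$ has order at least $p$ and $H^1(\Gamma,M)$ at least $p^2$ (the latter via $\Z_p[\Gamma]/(\tr)$ or combinations); equality in both forces all summands but one to be free and the remaining one to have excess $1$ with $H^1$ orders exactly $(p^2,p)$, and checking the table, the only such indecomposable is $\Z_p[\Gamma]/(\tr_\Gamma)$. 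Counting ranks then gives $s$ free summands.

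The main obstacle is verifying the required inequalities across the full list of indecomposables, in particular the ones with excess $\le 0$ (e.g. $\Z_p$, $\Z_p[\zeta_p]$, $\Z_p[\Gamma/\Gamma']$, and extensions of $\Z_p[\zeta_{p^2}]$ by $\Z_p$ or by $\Z_p[\Gamma/\Gamma']$): one must show they cannot "absorb" cohomology, i.e. each has nontrivial $H^1(\Gamma',-)$ or $H^1(\Gamma,-)$ whenever it is not free, so that pairing such a summand with an excess-$1$ summand pushes orders beyond $(p^2,p)$. I would attack this using Tate periodicity ($\widehat H^{0}$ and $\widehat H^{-1}$ both available) and the fact that a $\Z_p[\Gamma]$-lattice with trivial $\widehat H^*$ for both $\Gamma$ and $\Gamma'$ is projective (Rim's criterion), treating the finitely many cases directly.
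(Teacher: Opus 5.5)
Your plan (tabulate invariants of all indecomposable $\Z_p[\Gamma]$-lattices and sum per-summand inequalities) could in principle become a finite verification. As written, however, the decisive step is not carried out, and the inequalities and table entries you propose are wrong.

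For $L=\Z_p[\zeta_{p^2}]$ the excess $a_{p^2}(L)-a_1(L)$ is $1$, yet $H^1(\Gamma,L)\cong L/(\zeta_{p^2}-1)L\cong\Z/p\Z$. So your bound $|H^1(\Gamma,L)|\ge p^{2(a_{p^2}(L)-a_1(L))}$ fails. Your entry $(p,p)$ for this lattice is also incorrect. Restricted to $\Gamma'$, $L$ is free of rank $p$ over $\Z_p[\zeta_p]$ with $\gamma^p$ acting by $\zeta_p$, so $H^1(\Gamma',L)\cong(\Z/p\Z)^p$. With your entry, the lattice $\Z_p[\zeta_{p^2}]\oplus\Z_p[\zeta_p]\oplus\Z_p[\Gamma]^s$ would have the required rational type and $H^1$-orders $(p^2,p)$, so it would refute the lemma; the table as written cannot support your conclusion.

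Similarly, $\Z_p$ and $\Z_p[\Gamma/\Gamma']$ are non-free with $H^1(\Gamma,-)=H^1(\Gamma',-)=0$, contrary to what you intend to show. They are seen only by $\widehat H^0$. In fact Herbrand quotients $h=|\widehat H^0|/|H^1|$ depend only on the rational type and give $h(\Gamma,M)=p^{-2}$ and $h(\Gamma',M)=p^{-1}$. So the hypotheses force $\widehat H^0(\Gamma,M)=\widehat H^0(\Gamma',M)=0$, which excludes such summands outright. A minor point: $\Q_p(\zeta_p)$ occurs $s+1$ times in $\Q_p\otimes_{\Z_p}M$, not $s$.

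The real missing idea is to isolate and rule out the configuration $M\cong\Z_p[\zeta_p]\oplus M'$. Here $\Z_p[\zeta_p]$ contributes $H^1$-orders $(p,1)$ with excess $0$, and $M'$ contributes $(p,p)$ with excess $1$. Additivity and per-summand bounds alone cannot exclude this; you need to know that no lattice with $|H^1(\Gamma,-)|=|H^1(\Gamma',-)|=p$ has positive excess. The paper handles this in four steps:
\begin{enumerate}
\item It uses Proposition~\ref{prop-repre} (through the argument of Theorem~\ref{theoAgeneral}) to reduce to showing that $H^1(\Gamma,M)$ is cyclic.
\item Assuming $H^1(\Gamma,M)\cong(\Z/p\Z)^2$, the classification via Yakovlev diagrams forces a summand $\Z_p[\zeta_p]$, the unique indecomposable $N$ with $H^1(\Gamma',N)=0$ and $H^1(\Gamma,N)\cong\Z/p\Z$.
\item The explicit list in \cite{CVM} shows that the part $M''$ of the complement $M'$ away from $\Z_p$, $\Z_p[\Gamma/\Gamma']$, $\Z_p[\Gamma]$ has one of three shapes. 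Each has at least as many copies of $\Q_p$ as of $\Q_p(\zeta_{p^2})$.
\item The discarded summands have excess $\le 0$, so this contradicts $a_{p^2}(M)-a_1(M)=1$.
\end{enumerate}
Without this input, or a correct and complete computation over all indecomposables, your summation argument does not go through.
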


\begin{proof}
According to Proposition~\ref{prop-repre}, it suffices to show that $H^1(\Gamma, M) \cong \Z/p^2\Z$. Suppose, for a contradiction, that $H^1(\Gamma, M) \cong (\Z/p\Z)^2$. By the classification of indecomposable $\Z_p[\Gamma]$-lattices (for instance using Yakovlev diagrams; see \cite[Thm. 5]{Yakovlev2}), one deduces that $$M \cong \Z_p[\zeta_p] \oplus M'$$ for some $\Z_p[\Gamma]$-lattice $M'$. For this, one may note that $\Z_p[\zeta_p]$ is the unique indecomposable $\Z_p[\Gamma]$-lattice
$N$ such that $H^1(\Gamma',N)=0$ and $H^1(\Gamma,N)\cong \Z/p\Z$. It then follows that
\begin{equation*}
H^1(\Gamma', M') \cong \Z/p\Z
\qquad\text{and}\qquad
H^1(\Gamma, M') \cong \Z/p\Z.
\end{equation*}
Let $M''$ denote the direct sum of all the indecomposable
summands in the Krull-Schmidt decomposition of $M'$ other than $\Z_p$, $\Z_p[\Gamma/\Gamma']$, and $\Z_p[\Gamma]$. Then $M''$ can occur in exactly one of the following three forms (cf. \cite[p.404]{CVM}):

\begin{enumerate}
\item The lattice $M''$ is a non-split extension of $\Z_p[\Gamma/\Gamma']$ by $\Z_p[\zeta_{p^2}]$. In this case, we have $\Q_p \otimes_{\Z_p} M''\cong \Q_p[\Gamma]$.

\item The lattice $M''$ is an indecomposable extension of $\Z_p \oplus \Z_p[\zeta_p]$ by $\Z_p[\zeta_{p^2}]$. Its rational representation is $\Q_p[\Gamma]$.

\item The lattice $M''$ is the direct sum of $\Z_p[\zeta_p]$ and an indecomposable extension of $\Z_p \oplus \Z_p[\Gamma/\Gamma']$ by $\Z_p[\zeta_{p^2}]$. In this case, we have $\Q_p \otimes_{\Z_p} M'' \cong \Q_p[\Gamma] \oplus \Q_p \oplus \Q_p(\zeta_p)$.
\end{enumerate}

Each of these possibilities leads to a contradiction. Indeed, in each case the multiplicity of $\mathbb{Q}_p$ in $\mathbb{Q}_p \otimes_{\mathbb{Z}_p} M''$ is at least that of $\mathbb{Q}_p(\zeta_{p^2})$, whereas in $\mathbb{Q}_p \otimes_{\mathbb{Z}_p} M$ the multiplicity of $\mathbb{Q}_p$ is exactly one less than that of $\mathbb{Q}_p(\zeta_{p^2})$.
\end{proof}

\begin{proof}[\textbf{Proof of Theorem \ref{theo-removingC}}]

 We recall that $H^i(P, \mu_{K})= H^i(P, \mu_{K,p})$,  $H^i(P, U_K)=  H^i(P, \Z_p \otimes U_K)$ and $H^i(P, E_K)=  H^i(P, \EE_K)$ by Lemma~\ref{lemma tensor cohomology}.

 \textbf{Case (i):} suppose $|P|=p$.
Then  we have $H^2(P, U_K)=0$ and $H^1(P, U_K)=\mb Z/p\mb Z$ by Remark~\ref{rema-gcu} (ii) and Proposition \ref{unit principal genus theorem}.
Moreover, by Isomorphism \eqref{remark unit lattice galois fixed}, there is an exact sequence 
\begin{equation}\label{exact sequence-C}
0 \to H^1(P, \mu_{K,p}) \longrightarrow H^1(P, \Z_p \otimes U_K) \longrightarrow H^1(P,\EE_K) \longrightarrow H^2(P, \mu_{K, p}) \to 0.    
\end{equation}
Since $ |H^1(P, \mu_{K,p})| =| H^2(P, \mu_{K,p})|$,  we have $H^1(P, E_K) \cong H^1(P,U_K) \cong \Z/p\Z$. 

 \textbf{Case (ii):} suppose $|P|=p^n \geq p^2$ and $|\mu_{K,p}|=p$.
 Let us denote the Galois groups $G_{F_2/F}$ and $G_{F_2/F_1}$ by $ {\Gamma}$ and $ {\Gamma}'$, respectively.
 In this case, by Remark~\ref{rema-gcu} (ii), Lemma~\ref{assump-F12} and Proposition~\ref{unit principal genus theorem}, we have
\begin{align}\label{cohomology preparation}
   H^1(P, U_{K})\cong\mb Z/|P| \mb Z,\ \  H^1(\Gamma, U_{F_2})\cong\mb Z/p^2 \mb Z,\ \   H^1( {\Gamma}', U_{F_2}) \cong \Z/p\Z, \\  
   \text{ and } \  H^2(P, U_{K})=  H^2( {\Gamma}, U_{F_2}) = H^2(\Gamma', U_{F_2})=0 \nonumber.  
\end{align}
(For case (a) and $p=2$, see Remark \ref{rema-modifiedgenus}.)
Furthermore,  $(\EE_K)^P = \EE_F$ implies $(\EE_{F_2})^{{\Gamma}} = \EE_F$.
Thus, using the $2$-periodicity of cohomology of finite cyclic groups,   by Isomorphism \eqref{remark unit lattice galois fixed}, we have the  following  commutative diagram
\begin{equation}\label{sequence level 2}
\begin{tikzcd}[column sep=1.1em, row sep=2.0em]
0 \ar{r} & \widehat{H}^{-1}(P, \mu_{K, p}) \ar{r}  \ar{d}{def^{-1}_{\mu}}& \widehat{H}^{-1}(P, \Z_p \otimes U_K) \ar{d}{def^{-1}_{U}} \ar{r}{\lambda} & \widehat{H}^{-1}(P, \EE_K) \ar{r} \ar{d}{def^{-1}_{\EE}} & \widehat{H}^0(P, \mu_{K, p}) \ar{r}  \ar{d}{def^{0}_{\mu}} & 0  \\
0 \ar{r} & \widehat{H}^{-1}({\Gamma}, \mu_{F_2, p}) \ar{r}{\rho}& \widehat{H}^{-1}({\Gamma}, \Z_p \otimes U_{F_2})   \ar{r}{\lambda'}& \widehat{H}^{-1}({\Gamma},\EE_{F_2})\ar{r} & \widehat{H}^0({\Gamma}, \mu_{F_2, p})    \ar{r} &0
\end{tikzcd}   
\end{equation}
with exact rows.
Here  $def^{-1}_{\ast}$ and $def^0_{\mu}$ are the natural maps, which are induced by the norm map from $K$ to $F_2$ and the identity map $\mu_{F, p}=(\mu_{K,p})^P \rightarrow  (\mu_{F_2,p})^{\Gamma}=\mu_{F, p}$, respectively (cf. \cite[  Ch.1, \S 9]{NSW}).

By the assumption $|\mu_{K,p}| = p$, we have 
$$|\widehat{H}^{-1}(P, \mu_{K, p})|=|\widehat{H}^{0}(P, \mu_{K, p})|=|\widehat{H}^{-1}(\Gamma, \mu_{F_2, p})|=|\widehat{H}^{0}(\Gamma, \mu_{F_2, p})|=p$$
and that  $def_\mu^0$ is an isomorphism.
Hence,  from  \eqref{cohomology preparation} and   \eqref{sequence level 2}, it follows that $  {H}^{1}(P,E_K)$ $ \cong \widehat{H}^{-1}(P,E_K) $ is isomorphic
either to $\Z/p^{n-1}\Z \times \Z/p\Z$ or to $\Z/p^{n}\Z$.

To obtain a contradiction, suppose  $\widehat{H}^{-1}(P,E_K)\cong \widehat{H}^{-1}(P,\EE_K)\cong \mb Z/p^{n-1}\mb Z\times \mb Z/p\mb Z$. 
Then there exists $x\in \widehat{H}^{-1}(P,\EE_K)$, not in the image of
$\lambda$, such that $px=0$.
A diagram chase, together with the fact that $def_\mu^0$ is an isomorphism,
shows that $def_{\EE}^{-1}(x)$ does not lie in the image of
$\lambda'$.
Since $\coker(\rho)
\cong \mb Z/p\mb Z$ and $|\widehat{H}^{-1}(\Gamma,\EE_{F_2})|=p^2$, it follows that
$\widehat{H}^{-1}(\Gamma,\EE_{F_2})\cong \mb Z/p\mb Z\times \mb Z/p\mb Z$.
It therefore suffices to show that $|H^1(\Gamma',\EE_{F_2})|=p$,
as this would contradict Lemma~\ref{lemma-p^2}  and Theorem~\ref{theo-main}.

If not, we have $|H^1(\Gamma',\EE_{F_2})|=p^2$ or $|H^1(\Gamma',\EE_{F_2})|=1$, since $H^1( {\Gamma}', U_{F_2}) \cong \Z/p\Z$ by \eqref{cohomology preparation},   and the orders $|H^1( {\Gamma}',U_{F_2})|$ and $|H^1( {\Gamma}',\EE_{F_2})|$ differ by at most $p$.
We show that  neither case is possible.

First suppose  $|H^1(\Gamma',\EE_{F_2})|=1$.
Then we have the following exact sequence
\begin{equation*} 
 0=H^1(\Gamma',\EE_{F_2}) \longrightarrow H^2(\Gamma', \mu_{F_2, p}) \to  H^2(\Gamma', \Z_p \otimes U_{F_2})=0 
\end{equation*}
by \eqref{cohomology preparation},
and this yields $H^2(\Gamma', \mu_{F_2, p})=0$.
This leads to a contradiction since $H^2(\Gamma', \mu_{F_2, p})\cong  \widehat{H}^0(\Gamma', \mu_{F_2, p})\cong \mb Z/p\mb Z$.

Now suppose $|H^1(\Gamma',\EE_{F_2})|=p^2$.
We claim that the restriction map $res_{\EE}: H^1(\Gamma, \EE_{F_2}) \to H^1(\Gamma', \EE_{F_2})$ is surjective by applying the four lemma to the diagram
\begin{equation*}
\begin{tikzcd}
{H^1(\Gamma, \Z_p \otimes U_{F_2})} \arrow[d, "res_U"] \arrow[r]  & {H^1(\Gamma, \EE_{F_2})} \arrow[d, "res_{\EE}"] \arrow[r]   & {H^2(\Gamma,\mu_{F_2,p})} \arrow[d, "res_{\mu}"] \arrow[r] & 0 \arrow[d] \\
{H^1(\Gamma',  \Z_p \otimes U_{F_2})} \arrow[r]                   & {H^1(\Gamma', \EE_{F_2})} \arrow[r]                   & {H^2(\Gamma', \mu_{F_2,p})} \arrow[r]                & 0          
\end{tikzcd}
\end{equation*}
with exact rows, where $res_\ast$ denotes an appropriate restriction map.

The surjectivity of $res_{\mu}$ can be checked directly from the assumption on the orders of $\mu_{F_2,p}$ and $\mu_{F,p}$. By genus theory, $res_U$ corresponds to the lifting map $\CC_{F_2/F} \to \CC_{F_2/F_1}$ in case (a) and to the natural map $I_{F_2}^{\Gamma}/I_F \to I^{\Gamma'}_{F_2}/I_{F_1}$ in case (b), respectively (cf. \cite[Prop. 3.5]{KumonLim}). 
Hence, $res_U$ is surjective. 
As a consequence, if $H^1(\Gamma', \EE_{F_2})$ has $p^2$ elements, then $res_{\EE}$ is an isomorphism, i.e., $ H^1( {\Gamma}/ {\Gamma}', \EE_{F_2}^{ {\Gamma}'})= \mathrm{ker} (res_{\EE})=0$.

However,   $ H^1( {\Gamma}/ {\Gamma}', \EE_{F_2}^{ {\Gamma}'})$ is non-trivial.
Indeed, the Krull-Schmidt decomposition of $\EE_{F_2}^{ {\Gamma}'}$ as a $\Z_p[ {\Gamma}/ {\Gamma}']$-lattice has a direct summand isomorphic to $\Z_p[ {\Gamma}/ {\Gamma}']/(\tr_{\Gamma/\Gamma'})$.
This follows from the isomorphisms $$\Q_p \otimes_{\Z_p} \EE_{F_2}^{ {\Gamma}'} \cong (\Q_p \otimes_{\Z_p} \EE_{F_2})^{ {\Gamma}'} \cong \Q_p[ {\Gamma}/ {\Gamma}']^{r_F} \oplus \Q_p[ {\Gamma}/ {\Gamma}']/(\tr_{\Gamma/\Gamma'}),$$ together with Diederichsen's classification of indecomposable $\Z_p[\Gamma/\Gamma']$-lattices (cf. \cite[\S 2]{HelRei1}).
Note that $ {H}^{1}(\Gamma/\Gamma', \mb Z_p[\Gamma/\Gamma']/(\tr_{\Gamma/\Gamma'}))  \cong \mb Z/ p  \mb Z$.
This contradiction shows   $|H^1(\Gamma',\EE_{F_2})| \neq p^2$, and hence we obtain $|H^1(\Gamma',\EE_{F_2})|=p$.

Putting everything together, we conclude that
$  {H}^{1}(P,E_K) \cong \Z/p^{n}\Z \cong  {H}^{1}(P,U_K)$.
\end{proof}

\subsection{Galois descent in special cases}

In general, the Galois descent $(\EE_K)^P = \EE_F$ is difficult to analyze. Nevertheless, we record the following observation.

\begin{prop}\label{prop-C}
Let $K/k$ be a finite Galois extension of number fields. 
Assume $p\nmid h_k$, $r_k=0$, and $\zeta_p \in k$.
Furthermore, assume that $G_{K/k} = H \times P$, where $P$ is a cyclic $p$-group and $H$ is a group of order prime to $p$. 
Then $(\EE_K)^P = \EE_F$, where $F=K^P$. 
\end{prop}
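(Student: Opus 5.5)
The plan is to use the descent isomorphism \eqref{remark unit lattice galois fixed} for $M/L=K/F$ with group $P$:
\[
\EE_K^P/\EE_F\cong \ker\big(H^1(P,\mu_{K,p})\to H^1(P,\Z_p\otimes U_K)\big),
\]
and to prove that this kernel vanishes. The argument has two steps. First, we show that all of $H^1(P,\mu_{K,p})$ is acted on trivially by $H$. Second, we identify the $H$-trivial part of the kernel with the analogous kernel for $K^H/k$, and there it vanishes because $r_k=0$.

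\textbf{Step 1 ($H$ acts trivially on $\mu_{K,p}$).} The group $H$ acts on $\mu_{K,p}$ through the image of $G_{K/k}$ in $G_{k(\mu_{K,p})/k}$. Since $\zeta_p\in k$, the group $G_{k(\mu_{K,p})/k}$ embeds into $\ker\big((\Z/p^m\Z)^\times\to(\Z/p\Z)^\times\big)$ for $p$ odd, and into $(\Z/2^m\Z)^\times$ for $p=2$. In both cases it is a $p$-group. As $p\nmid |H|$, the image of $H$ is trivial, so $H$ fixes $\mu_{K,p}$ pointwise. Because $H$ commutes with $P$, the induced action of $H$ on $H^1(P,\mu_{K,p})$ is also trivial.

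The restriction map $H^1(P,\mu_{K,p})\to H^1(P,\Z_p\otimes U_K)$ is $H$-equivariant. Hence its kernel lies in the $H$-invariants, i.e.\ in the image of $e_1=|H|^{-1}\tr_H$. By the Hochschild--Serre argument for the coprime-order group $H$ (equivalently, applying $e_1$, which is exact on $\Z_p[H]$-modules), we get
\[
e_1 H^i(P,\Z_p\otimes U_K)=H^i(P,(\Z_p\otimes U_K)^H)=H^i(P,\Z_p\otimes U_{K^H}),
\]
and similarly for $\mu$. The kernel is therefore identified with
\[
\ker\big(H^1(P,\mu_{K^H,p})\to H^1(P,\Z_p\otimes U_{K^H})\big),
\]
which by \eqref{remark unit lattice galois fixed} is $\EE_{K^H}^P/\EE_k$ for the cyclic $p$-extension $K^H/k$. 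Here $P\cong G_{K^H/k}$, and $\mu_{K^H,p}=\mu_{K,p}$ by Step 1. I need to check carefully that $e_1$ commutes with the maps and with the $\Z_p\otimes$ identifications of Lemma~\ref{lemma tensor cohomology}; this is routine, since $|H|$ is invertible in $\Z_p$.

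\textbf{Step 2 (the base case $K^H/k$).} Let $u\in U_{K^H}$ represent a kernel element, so that $u^{\sigma-1}=\zeta\in\mu_{K^H,p}$ for a generator $\sigma$ of $P$. (After tensoring with $\Z_p$ one may need to adjust $u$ by a prime-to-$p$ power, which is harmless.) Let $e$ be the order of $\zeta$. Then $u^e$ is $P$-invariant, so $u^e\in U_k$. Since $r_k=0$, the group $U_k$ is finite. Hence $u$ is a root of unity, i.e.\ $u\in\mu_{K^H}$. The cocycle $\sigma\mapsto u^{\sigma-1}$ is then already a coboundary in $\mu_{K^H,p}$ (after projecting to the $p$-part), so the class is trivial. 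Therefore $\EE_{K^H}^P=\EE_k$, and by Step 1 the kernel for $K/F$ is zero, which gives $(\EE_K)^P=\EE_F$.

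I expect the main obstacle to be the bookkeeping in Step 1: verifying that the isotypic projection identifies the kernels at the two levels, and in particular that $(\Z_p\otimes U_K)^H=\Z_p\otimes U_{K^H}$ compatibly with the $P$-action and the connecting maps. It seems the hypothesis $p\nmid h_k$ is not needed in this argument. If the projection argument breaks down, I would instead use norms $N_{K/K^H}$ together with the fact that multiplication by $|H|$ is invertible on the $p$-primary groups $H^1(P,\cdot)$.
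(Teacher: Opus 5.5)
Your proof is correct, and it takes a genuinely different route from the paper.

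\textbf{The paper's route.} The paper argues by contradiction using Kummer theory. It starts from $\bar\epsilon\in (E_K)^P\setminus E_F$ with $\epsilon^{p^b}$ essentially in $F$. Using $G_{K/k}=H\times P$ and $\zeta_p\in k$, it descends the Kummer generator to some $y\in k^\times$. It then uses $p\nmid [F:k]$, $p\nmid h_k$ and $r_k=0$ to write $y=y'^p\zeta'$, which produces the contradiction. When $\mu_{k,p}\neq\mu_{K,p}$, it must first prove descent for $F(\mu_{K,p})/F$ via Lemma~\ref{coho-trivial} and Proposition~\ref{proposition unit lattice fixed 1}. This requires a case analysis for $p=2$, including a Gauss genus-theory step that uses $2\nmid h_k$.

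\textbf{Your route.} You observe that $\zeta_p\in k$ makes $k(\mu_{K,p})/k$ a $p$-extension, so $H$ fixes $\mu_{K,p}$. The bookkeeping you were worried about reduces to one observation: $\mu_{K,p}\subseteq \Z_p\otimes U_{K^H}=e_1(\Z_p\otimes U_K)$, and this is a $\Z_p[P]$-direct summand of $\Z_p\otimes U_K$, with complement $(1-e_1)(\Z_p\otimes U_K)$. Hence $H^1(P,\Z_p\otimes U_{K^H})\to H^1(P,\Z_p\otimes U_K)$ is injective. The obstruction group in \eqref{remark unit lattice galois fixed} for $K/F$ is then literally the one for $K^H/k$, namely $(\EE_{K^H})^P/\EE_k$. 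Your Step~2 correctly shows this vanishes. Equivalently, $(\EE_{K^H})^P$ is a torsion-free $\Z_p$-module of rank $r_k=0$.

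\textbf{Comparison.} Your argument is shorter and uniform in $p$, with no special treatment of $p=2$. It avoids the case split on $\mu_{k,p}$ versus $\mu_{K,p}$ and needs none of the auxiliary descent results. As you suspected, it never uses $p\nmid h_k$. More generally, it shows $(\EE_K)^P/\EE_F\cong(\EE_{K^H})^P/\EE_k$ whenever $H$ fixes $\mu_{K,p}$. The paper's Kummer-theoretic proof is more hands-on and connects with Proposition~\ref{proposition unit lattice fixed 1}, which is of independent interest, but for this statement your reduction is the more economical and slightly stronger argument.
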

 Combining Proposition~\ref{prop-C} with  Theorem~\ref{theo-removingC}, we obtain the following corollary.
 
\begin{coro}\label{theoB-ext}
The equality  $(\EE_K)^P = \EE_F$ holds in the following cases:
\begin{enumerate}
\item  $p=2$, $k=\Q$, and $K$ is totally real.
\item   $p=2$, $k$ is an imaginary quadratic field with $2\nmid h_k$ and $\zeta_4 \notin K$.
\item   $p=3$, $k=\Q(\sqrt{-3})$, and $\zeta_9 \notin K$.
\end{enumerate}
Under any of these conditions,  assume further that:
\begin{itemize}
\item[(a)] $K/F$ is unramified with $\CC_{K/F} \cong \Z/|P|\Z$, or
\item[(b)]  $K/F$ is ramified at a unique prime of $F$,   $p\nmid h_k$, and $\CC_{K/F} = 0$.
\end{itemize}
Then Isomorphism~\eqref{isoM} holds for $K/k$.
\end{coro}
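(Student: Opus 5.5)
The corollary has two parts. In each of the three cases we must check the hypotheses of Proposition~\ref{prop-C}, namely $p\nmid h_k$, $r_k=0$ and $\zeta_p\in k$. We then combine the resulting descent $(\EE_K)^P=\EE_F$ with Theorem~\ref{theo-removingC} under (a) or (b). Because $p=2$ in cases (i) and (ii), Remark~\ref{rema-modifiedgenus} supplies the $p=2$ version of Lemma~\ref{assump-F12}(i) used in case (a).

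For the hypotheses of Proposition~\ref{prop-C}: in (i), $k=\Q$, so $h_k=1$, $r_k=0$ and $\zeta_2=-1\in\Q$. In (ii), $k$ is imaginary quadratic with $2\nmid h_k$ by assumption, so $r_k=0$ and $-1\in k$. In (iii), $k=\Q(\sqrt{-3})$ has class number $1$ and $r_k=0$, and it contains $\zeta_3$. Proposition~\ref{prop-C} then gives $(\EE_K)^P=\EE_F$ in all three cases, which is the first assertion.

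For the second assertion, Theorem~\ref{theo-removingC} requires either $|P|=p$ or $|\mu_{K,p}|=p$. So it suffices to show that in each case $|\mu_{K,p}|=p$, or else $|P|=p$.
\begin{itemize}
\item In (i), $K$ is totally real, so $\mu_{K,2}=\{\pm1\}$ has order $2$.
\item In (ii), $\zeta_4\notin K$ forces $\mu_{K,2}=\{\pm1\}$.
\item In (iii), $\zeta_3\in k\subset K$ and $\zeta_9\notin K$ give $\mu_{K,3}=\mu_3$, of order $3$.
\end{itemize}
Thus part (i) or part (ii) of Theorem~\ref{theo-removingC} applies, according to the size of $|P|$. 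Together with the descent just established and hypothesis (a) or (b), it yields Condition~\eqref{condC} and Isomorphism~\eqref{isoM}. In case (b) the hypothesis $p\nmid h_k$ is already built in.

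The only delicate point is $p=2$. Theorem~\ref{theo-removingC}(ii) in case (a) uses $H^2(\Gamma',U_{F_2})=0$ and the intermediate capitulation kernels. For odd $p$ these came from Corollary~\ref{prop-tk}, which is stated only for odd primes. For $p=2$ I would invoke the alternative genus-theoretic argument in Remark~\ref{rema-modifiedgenus}, as the proof of Theorem~\ref{theo-removingC} itself indicates, so that Lemma~\ref{assump-F12}(i) holds for $p=2$ as well. Everything else is a direct check of hypotheses.
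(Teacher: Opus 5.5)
Your proposal is correct and follows the paper's own route: verify $p\nmid h_k$, $r_k=0$ and $\zeta_p\in k$ so that Proposition~\ref{prop-C} gives $(\EE_K)^P=\EE_F$, then apply Theorem~\ref{theo-removingC}, which covers every $|P|$ because $|\mu_{K,p}|=p$ in all three cases. Your caveat is also the paper's: for $p=2$ under hypothesis (a) with $|P|\geq 4$, the argument rests on Remark~\ref{rema-modifiedgenus}, since Corollary~\ref{prop-tk} is stated only for odd $p$, and the paper notes this in parentheses in the proof of Theorem~\ref{theo-removingC}.
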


The following proposition complements Lemma~\ref{coho-trivial} (ii) (a) by showing that the Galois descent property still holds for $p=2$ even without cohomological triviality. We use it in the proof of Proposition~\ref{prop-C}.

\begin{prop}\label{proposition unit lattice fixed 1}
Let $M/L$ be a cyclic $p$-extension of number fields. 
If $p=2$, we further assume that there is a prime of $L$ above $2$  whose ramification index over $L \cap \Q(\zeta_{2^\infty})$ is odd.
Then $(\EE_M)^{G_{M/L}} = \EE_L$ in each of the following cases:
\begin{enumerate}
\item $M=L(\mu_{M,p})$;
\item $M/L$ is totally ramified at a non-$p$-adic prime of $L$.
\end{enumerate}
\end{prop}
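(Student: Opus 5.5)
By Isomorphism \eqref{remark unit lattice galois fixed}, $(\EE_M)^{G}/\EE_L$ is the kernel of $H^1(G,\mu_{M,p})\to H^1(G,\Z_p\otimes U_M)$, where $G=G_{M/L}$. So the plan is to show this map is injective. Since $G$ is cyclic, $H^1(G,\mu_{M,p})\cong \widehat H^{-1}(G,\mu_{M,p})={}_{N}\mu_{M,p}/\mu_{M,p}^{\sigma-1}$, with $\sigma$ a generator of $G$. A class $\zeta$ lies in the kernel exactly when $\zeta=u^{\sigma-1}$ for some unit $u\in U_M$ (after tensoring with $\Z_p$, $u$ may be taken up to prime-to-$p$ twists). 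It therefore suffices to show: if a root of unity $\zeta\in\mu_{M,p}$ with $N_{M/L}\zeta=1$ satisfies $\zeta=u^{\sigma-1}$ with $u\in U_M$, then $\zeta=\xi^{\sigma-1}$ with $\xi\in\mu_{M,p}$. I would reduce to $\zeta$ of $p$-power order and work with $u^{p^m}$, where $p^m=|\mu_{M,p}|$. Then $u^{p^m}$ is $G$-fixed, hence lies in $U_L$, so $u$ generates a Kummer-type extension $L(u)$ contained in $M$.

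\textbf{Case (ii).} Suppose $M/L$ is totally ramified at a non-$p$-adic prime $\mathfrak q$. Work locally at the unique prime $\mathfrak Q$ above $\mathfrak q$. The extension $M_{\mathfrak Q}/L_{\mathfrak q}$ is totally and tamely ramified and cyclic of $p$-power order, and $u$ is a local unit. In the tame case the unit group is $\mu_{q-1}\times(1+\mathfrak Q)$, where $1+\mathfrak Q$ is pro-$\ell$ with $\ell\neq p$. The residue field does not change under total ramification, and $\sigma$ acts trivially on the residue field. Hence $u^{\sigma-1}\equiv 1 \pmod{\mathfrak Q}$. But $\zeta\ne1$ of $p$-power order cannot be $\equiv1$ modulo a prime not above $p$. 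So $\zeta=1$, and the map is injective. No hypothesis on $p=2$ should be needed for this case.

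\textbf{Case (i).} Suppose $M=L(\mu_{M,p})$. Here $G$ embeds into $(\Z/p^m)^\times$ via $\sigma\zeta_{p^m}=\zeta_{p^m}^{a}$. I would compute $\widehat H^{-1}(G,\mu_{M,p})$ explicitly; it is trivial or of order $2$ except in degenerate cases, by Lemma \ref{coho-trivial}. Then I would show that $u^{\sigma-1}=\zeta$ forces $u$ to be a root of unity times an element of $L^\times$. The approach is to consider $v=u^{p^m}\in U_L$ and use that $M/L$ is generated by roots of unity. If $u\notin\mu_M L^\times$, then $L(u)\subseteq M$ is abelian and generated by $p^m$-th roots. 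Kummer theory over $L(\zeta_{p^{m'}})$ then gives $u=\xi\cdot c$ with $\xi\in\mu_M$ and $c\in L$, so $\zeta=\xi^{\sigma-1}$. For $p$ odd this should be routine, since $\mathrm{Gal}(L(\mu_{p^\infty})/L)$ has a cyclic structure. For $p=2$ the hypothesis on the ramification index over $L\cap\Q(\zeta_{2^\infty})$ should exclude elements like $u=1+i$ or $\sqrt2$. For instance, over $\Q$ with $M=\Q(i)$, one has $(1+i)^{\sigma-1}=-i$, but $1+i$ is not a unit. The real risk is a unit $u$ with $u^2\in L\cdot\mu$, as in $\Q(\zeta_8)^+$-type phenomena. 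The key step is then a local argument at a prime above $2$ with odd ramification index over $L\cap\Q(\zeta_{2^\infty})$. There, $L_v(\mu_{2^\infty})/L_v$ stays "cyclotomically generic", so an element $\sqrt{\zeta\cdot c}$ cannot be a unit unless $c$ is a unit times a square. This parity obstruction on valuations is what I would exploit.

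The main obstacle I expect is this $p=2$ part of case (i): showing that any unit $u$ with $u^{\sigma-1}\in\mu$ differs from an element of $L$ by a root of unity. I would first try the local valuation/ramification-index argument at the distinguished prime above $2$. If that fails, I would fall back on Lemma \ref{coho-trivial}(ii)(b) to handle the $\sqrt{-1}$ factor separately.
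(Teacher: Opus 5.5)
Your case (ii) is correct, and it takes a different and shorter route than the paper. Since $G_{M/L}$ is the inertia group at the unique prime $\mathfrak Q$ above $\mathfrak q$, every $u\in U_M$ satisfies $u^{\sigma-1}\equiv 1 \pmod{\mathfrak Q}$. A nontrivial $p$-power root of unity is never $\equiv 1$ modulo a prime not above $p$, because $1-\zeta$ divides $p$. Hence $H^1(G,\mu_{M,p})\to H^1(G,U_M)$ is injective. The paper instead takes a non-descending unit $\epsilon$ and uses case (i) for $L(\mu_{M,p})/L$ to make $p^b$ minimal with $\epsilon^{p^b}\in L(\mu_{M,p})$. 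It then observes that $L(\mu_{M,p},\epsilon^{p^{b-1}})/L(\mu_{M,p})$ would be a degree-$p$ subextension unramified outside $p$, which is impossible. Your version needs neither case (i) nor the extra hypothesis at $2$. In fact, in case (ii) one automatically has $\mu_{M,p}\subset L$, since $L(\mu_{M,p})/L$ is a subextension unramified at $\mathfrak q$.

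The gap is in case (i) for $p=2$. By Lemma~\ref{coho-trivial}, this is the only situation with $H^1(G,\mu_{M,p})\neq 0$: then $M=L(\sqrt{-1})$, $L\cap\Q(\mu_{M,2})=\Q(\eta_m)$, and the kernel has order at most $2$.

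Your step ``Kummer theory then gives $u=\xi c$ with $\xi\in\mu_M$, $c\in L$'' is not a general fact; it is exactly what fails without the hypothesis. For $L=\Q(\sqrt6)$ and $M=L(i)$, the unit $u=(1+i)(2+\sqrt6)/2$ has $u^2=i(5+2\sqrt6)$ and $u^{\sigma-1}=-i$, yet $u\notin\mu_M L^\times$. So the whole proof rests on how the odd ramification index enters, and your sketch leaves this open. The criterion you propose (``$\sqrt{\zeta c}$ cannot be a unit unless $c$ is a unit times a square'') is not the statement needed.

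What is missing is the Kummer identification of the square class of $x$:
\begin{enumerate}
\item Write $u^2=x\zeta$ with $x\in U_L$ and $\zeta\in\mu_{M,2}$. If $\sqrt x\in M$, then $x\equiv\pm1 \bmod L^{\times 2}$ and $u\in\mu_M U_L$.
\item Otherwise $\sqrt\zeta=\pm u/\sqrt x\notin M$, so $\zeta$ generates $\mu_{M,2}$.
\item Then $\sqrt x\in L(\sqrt\zeta)=L(\zeta_{2^{m+3}})$. This is a $(\Z/2\Z)^2$-extension of $L$, with group isomorphic to $G_{\Q(\zeta_{2^{m+3}})/\Q(\eta_m)}$. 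Its quadratic subextensions are $L(\sqrt{-1})$ and $L(\sqrt{\pm(2+\eta_m)})$.
\item Since $L(\sqrt x)\neq M$, this forces $x\equiv\pm(2+\eta_m)\bmod L^{\times 2}$.
\item The element $2+\eta_m$ is a uniformizer at the $2$-adic prime of $\Q(\eta_m)$. So its valuation at the distinguished prime of $L$ equals that prime's ramification index over $\Q(\eta_m)$, which is odd. This contradicts $v(x)=0$.
\end{enumerate}
In the example above that index is $2$, and indeed $x=5+2\sqrt6\equiv 2 \bmod L^{\times 2}$.

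This identification can be carried out globally, as in the paper, or locally at the distinguished prime, where the odd index keeps $L_v(\zeta_{2^{m+3}})/L_v$ biquadratic. Without it, the ``parity obstruction'' you mention has nothing to act on.
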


\begin{proof}
By Lemma \ref{lemma tensor cohomology}, one has $(\EE_M)^{G_{M/L}}=\EE_L$ if and only if $(E_M)^{G_{M/L}}=E_L$.
\vskip 5pt

\noindent \textbf{$\bullet$ Case (i) :} 
By Lemma \ref{coho-trivial}, the claim is non-vacuous only when $p=2$ and $L \cap \Q(\mu_{M,2})$ is totally real. Since $M/L$ is cyclic, Lemma \ref{coho-trivial} (ii)(b) implies $M = L(\sqrt{-1})$. As $\sqrt{-1} \in M$, we have 
\[
M\cap \Q(\zeta_{2^\infty})= \Q(\mu_{M,2})=\Q(\zeta_{2^{m+2}}),
\]
for some $m \geq 0$, which is the $m$-th layer of the cyclotomic $\Z_2$-extension of $\Q(\sqrt{-1})$. Since $L \cap \Q(\mu_{M,2})$ is totally real, it follows that
\begin{equation}\label{equation root of unity field}
    L \cap \Q(\mu_{M,2}) = L\cap \big (M \cap \Q(\zeta_{2^\infty}) \big) = \mathbb{Q}(\zeta_{2^{m+2}})^{G_{M/L}} =\Q(\eta_m),
\end{equation} 
where $\eta_m$ denotes $\zeta_{2^{m+2}}+\zeta_{2^{m+2}}^{-1}$.

  Suppose that there exists $\epsilon \in U_M$ whose class $\bar{\epsilon}$ lies in $(E_M)^{G_{M/L}} \setminus E_L$.
  The quotient group $(E_M)^{G_{M/L}}/E_L$ has exponent $2$ by $\eqref{remark unit lattice galois fixed}$. Hence, after replacing $\epsilon$ by a suitable odd power if necessary, we obtain
\begin{equation}\label{eq-non-p-power}
    \epsilon^2 = x \zeta \qquad \text{for some} \quad x \in U_L, \quad \zeta \in \mu_{M,2}.
\end{equation}
We claim the following : $\zeta$ generates $\mu_{M,2}$; the extension $L(\sqrt{\zeta})/L$ is a biquadratic Galois; and $L(\sqrt{\zeta})$ contains $L(\sqrt{x}) \neq M$. In particular, we have
\begin{equation}\label{iso-Z2Z2}
G_{L(\sqrt{\zeta})/L}  \cong G_{\Q(\zeta_{2^{m+3}})/\Q(\eta_m)}  \cong (\Z/2\Z)^2.
\end{equation}
Since $L(\sqrt{x}) \neq M=L(\sqrt{-1})$, it follows that $L(\sqrt{x}) = L(\sqrt{\pm (2+\eta_m)})$, and
\begin{equation}\label{Kummercongruence}
    x \equiv \pm (2+\eta_m) \pmod{L^{\times 2}}, 
\end{equation}
where we note $\eta_{m+1}^2=2+\eta_m$.
\vskip 5pt
Now we prove the above assertions. Since $E_M$ is torsion-free, we have $x \not\in U_L^2$, and hence $L(\sqrt{x})/L$ is quadratic.  
  We also have $M \neq L(\sqrt{x})$. Otherwise, we would have $x \equiv -1 \pmod{L^{\times 2}}$, which implies $\bar{\epsilon} \in E_L$, as $\bar{\epsilon}^2 = \bar{x} \in E_L^2$.

We claim that $\sqrt{\zeta} \notin M$. Indeed, otherwise we would have $\sqrt{x} \in M$. Since $\epsilon = \pm\sqrt{x\zeta} \in M$, this yields a contradiction. Hence, $\zeta$ generates $\mu_{M,2}$.

Consequently, we have $[L(\sqrt{\zeta}):L]=4$ and \eqref{iso-Z2Z2} follows from \eqref{equation root of unity field}. Together with $M \neq L(\sqrt{x})$, this gives $L(\sqrt{x})=L(\sqrt{\pm (2+ \eta_m)})$ and \eqref{Kummercongruence} follows.

Let $\mf q $ be a prime of $L$ dividing $2$ such that  the ramification index of $\mf q$ over $  \Q(\eta_m)$ is odd. Let $v$ be the normalized  valuation of $L$ corresponding to $\mf q$. In particular, $v(2+\eta_m)$ is equal to the ramification index of $L$ over $\Q(\eta_m)$ at $\mf q$ since $2+\eta_m$ is a uniformizer at the unique $2$-adic place $\mf q \cap \mb Q(\eta_m)$  of 
   $   \Q(\eta_m)$. 
   Note also $v(2+\eta_m) = v(\eta_m)$ when $m>0$.
   By the assumption on the ramification condition on $L / \Q(\eta_m)$, we obtain that     $v(2+\eta_m)$ is odd.
   It follows that
\begin{equation*}\label{arg-parity}
 0=v(x)\equiv v(\pm (2+\eta_m)) \equiv 1\pmod{2}.   
\end{equation*}
 This contradiction settles case (i).

\vskip 10pt
\noindent \textbf{$\bullet$ Case (ii) :}  Suppose there exists $\epsilon \in U_M$ representing a class in $(E_M)^{G_{M/L}} \setminus E_L$. 
Let $p^b$ denote the order of the class of $\bar{\epsilon}$ in the quotient $(E_M)^{G_{M/L}}/E_L$. 
After replacing $\epsilon$ by a suitable power prime to $p$, we may assume, by case (i) applied to $L(\mu_{M,p})/L$, that $p^b$ is the minimal integer such that $\epsilon^{p^b} \in L(\mu_{M,p})$.
It then follows that $L(\mu_{M,p}, \  \epsilon^{p^{b-1}})/L(\mu_{M,p})$ is a degree-$p$ extension unramified outside $p$. This yields a contradiction.
\end{proof}

\begin{proof}[\textbf{Proof of Proposition \ref{prop-C}}]
We treat separately the cases $\mu_{k,p}=\mu_{K,p}$ and $\mu_{k,p} \neq \mu_{K,p}$.

\vskip 5pt

\noindent \textbf{$\bullet$ Case $\mu_{k,p}=\mu_{K,p}$: }
Suppose that there exists $\epsilon \in U_K$ whose class $\bar{\epsilon} \in E_K$ lies in $E_K^P \setminus E_F$. By the same argument for \eqref{eq-non-p-power}, there exists a minimal positive integer $b$ such that $\epsilon^{p^b} =x$ for $x \in U_F$.
Since $G_{K/k} = H \times P$ and $\zeta_p\in k$, there exists a Kummer extension $L/k$ of degree $p$ such that $F(\sqrt[p]{x})=FL$. It follows that $x \equiv y \pmod{F^{\times p}}$ for some $y \in k^{\times}$. 

Since $x$ is in $U_{F}$, we have $v(y)\equiv 0 \pmod{p}$ for every   normalized discrete valuation $v$ of $F$.
Since $[F:k]$ is prime to $p$, we deduce that the principal ideal $(y)$ is a  $p$-th power as a fractional  ideal of $k$.
Since $p \nmid h_k$ and $r_k=0$, it follows that $y=y'^p \zeta$ for some $y' \in k^{\times}$ and $\zeta \in    \mu_{k}=U_k$.

From $x \equiv \zeta \pmod{F^{\times \, p}}$, we obtain $\sqrt[p]{\zeta} \in F(\sqrt[p]{x}) \subseteq K$. By the assumption $\mu_{K,p} = \mu_{k,p}$, we have $\zeta \in k^{\times p}$. Hence, we have $\epsilon^{p^b} = x \in F^{\times p}$, which contradicts the minimality of $b$.

\vskip 10pt
\noindent \textbf{$\bullet$ Case $\mu_{k,p} \neq \mu_{K,p}$ :}  
First we claim that 
\begin{equation}\label{equation root of 1 gal descent}
\big (E_{F(\mu_{K,p})} \big )^{G_{F(\mu_{K,p})/F}} = E_F.
\end{equation}
For proving this claim, we treat three cases.
    \begin{itemize}
        \item 
     $p\neq 2$: the claim \eqref{equation root of 1 gal descent} follows from Lemma~\ref{coho-trivial}.
        \item 
 $p=2$  and $k=\mathbb{Q}$: the assumption that $[F:\mathbb{Q}]$ is odd allows us to apply Proposition~\ref{proposition unit lattice fixed 1} (i).
        \item 
          $p=2$  and $k$ is imaginary quadratic:
           \begin{itemize}
               \item 
                    If $k \subset  \Q(\zeta_{2^\infty })$,  we can use Lemma~\ref{coho-trivial} again.
               \item 
                  
   If $k \cap \Q(\zeta_{2^\infty }) =\Q$,  then $k/\Q$ is unramified at $2$.
   Indeed, if $k/\Q$ is ramified at $2$ and   $k \cap \Q(\zeta_{2^\infty }) =\Q$, then $k=\Q(\sqrt{-m})$ for some square-free nonnegative integer $m$,  which has an odd prime factor.
     Then, by the classical Gauss' genus theory, $h_k$  is divisible by $2$.
     This contradicts to the assumption $2\nmid h_k$.
 Then  we can use Proposition~\ref{proposition unit lattice fixed 1} (i) since all ramification indices of  $F/\mb Q$ at $2$  are odd.
           \end{itemize}
    \end{itemize}

Now suppose that there exists $\epsilon \in U_K$ whose class $\bar{\epsilon}$ lies in $(E_K)^P \setminus E_F$.
Let $b > 0$ be minimal with $(\bar{\epsilon})^{p^b} \in E_F$. 
 Replacing $\epsilon$ by a suitable power   if necessary, we may assume that $\epsilon^{p^b}=x\zeta $ for some $x \in U_F$ and $\zeta \in \mu_{K,p}$.
  Moreover, the integer $p^b$ is the minimal number such that $\epsilon^{p^b} \in F(\mu_{K,p})$ by \eqref{equation root of 1 gal descent}.
            
 As $P$ is cyclic, the minimality of $b$ implies both $F(\epsilon^{p^b})=F(\mu_{K,p})$ and $[F(\epsilon^{p^{b-1}}):F(\epsilon^{p^b})]=p$.

Since $F(\epsilon^{p^{b-1}})=F( \sqrt[p]{ \zeta x}   )$, we obtain $ \sqrt[p]{  x}  \in F(   \sqrt[p]{ \zeta } ,\ \epsilon ^{p^{b-1}})$.
Let $M_1/k$ be the extension of $k$ in $K^H$ such that $F(\epsilon^{p^{b-1}})=FM_1$. 
One checks that the abelian extension $M_1(  \sqrt[p]{ \zeta } )/k$ satisfies $F \cap M_1(   \sqrt[p]{ \zeta } )=k$ and $FM_1(   \sqrt[p]{ \zeta } )=F( \sqrt[p]{ \zeta }, \  \sqrt[p]{  x}  )$.

Consequently, there exists an extension $M_2/k$ of degree $\le p$ in $M_1(   \sqrt[p]{ \zeta } )/k$ such that $M_2F=F(\sqrt[p]{x})$. 
By Kummer theory, there exists $y \in k^{\times}$ such that $x \equiv y \pmod{F^{\times p}}$.
By the same argument as in the case $\mu_{k,p} = \mu_{K,p}$, one checks $y=  y '^p  \zeta'$ for some $y' \in k^{\times}$ and $\zeta' \in    \mu_k=U_k$. 
By replacing $\epsilon$ by a suitable power again, we may assume $\zeta'\in \mu_{k, p}$.
By the assumption $\mu_{K,p} \neq \mu_{k,p}$, we have $ \sqrt[p]{ \zeta' }  \in \mu_{K,p}$ and hence 
$$
F(\sqrt[p]{x})=F(\sqrt[p]{y})=F(\sqrt[p]{\zeta'}) \subseteq F(\mu_{K,p}) = F(\epsilon^{p^b}).
$$
In particular,  $x \in U_{F(\mu_{K,p})}^p$ and this implies $F(\epsilon^{p^{b-1}})  =  F(   \sqrt[p]{ \zeta } ) =F( \sqrt[p]{ \zeta } , \  \sqrt[p]{x}) $ since $\epsilon^{p^b}=x\zeta$.
Noting that $F(\epsilon^{p^{b-1}})/F(\epsilon^{p^{b}})$ is of degree $p$,    we obtain $   \sqrt[p]{ \zeta }  \notin \mu_{K, p}$. This is a contradiction since  $   \sqrt[p]{ \zeta } \in F(\epsilon^{p^{b-1}})\subset K$, i.e., $   \sqrt[p]{ \zeta } \in \mu_{K, p}$.
\end{proof}

\section{Galois structure of units in a \texorpdfstring{$\Z_p$}{  $\Z_p$}-extension of a number field}

Let $F/k$ be a  finite Galois extension where the infinite places are unramified.
Let $p$ be a prime with $p\nmid h_k\cdot [F:k]$.
Let $k_{\infty}/k$ be a $\Z_p$-extension such that $F_{\infty}/F$ is ramified at a unique prime of $F$, where $F_\infty=Fk_{\infty}$.

For each $n \in \N$, denote by $k_n$ and $F_n$ the $n$-th layers of $k_{\infty}/k$ and $F_{\infty}/F$, respectively. Following common notation in literatures on Iwasawa theory, we write
\begin{equation*}
    A_n:=Cl_{F_n}[p^{\infty}], \quad \Gamma_n:=G_{F_n/F}.
\end{equation*}
Since $G_{F_n/k}$ is the direct product of $\Gamma_n$ and 
$G_{F_n/k_n} \cong G_{F/k}$, Theorem~\ref{theo-smallramification} applies.

\vskip 5pt

  It is an observation, made precise in \cite[Cor.~4.10]{All},   that Vandiver conjecture for $p$ is equivalent to Greenberg conjecture (\cite{G}) for $\Q(\zeta_p)^+$ and the existence of local Minkowski units at $p$ in each layer of the cyclotomic $\Z_p$-extension of $\Q(\zeta_p)^+$. We record below a variant of this equivalence that extends \cite[Cor. 4.10]{All} to the present, more general setting.

\begin{prop}\label{prop-GVM} Let $F/k$ and $p$ be as above and assume moreover that Condition \eqref{condC} holds for each $F_n/k$. Then the following are equivalent:
\begin{enumerate}
    \item Isomorphism \eqref{isoM} holds for $F_n/k$ for each $n$, and $|A_n|$ is bounded independently of $n$.
     \item Isomorphism \eqref{isoM} holds for $F_n/k$ for sufficiently large $n$, and $|A_n|$ is bounded independently of $n$.
    \item $A_n=0$ for each $n$.
    \item $A_0=0$.
\end{enumerate}
\end{prop}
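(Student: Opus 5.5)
The natural strategy is to apply Theorem~\ref{theo-smallramification}~(ii) to each layer $F_n/k$. We have $G_{F_n/k}\cong G_{F/k}\times\Gamma_n$ with $\Gamma_n$ cyclic of order $p^n$. By hypothesis $p\nmid h_k$, Condition~\eqref{condC} holds, and $F_n/F$ is ramified at exactly one prime of $F$. The proof of Theorem~\ref{theorem final}~(ii) shows that this prime is totally ramified in $F_n/F$. So for every $n$ we get the equivalence
\[
\eqref{isoM}\text{ for }F_n/k \iff \CC_{F_n/F}=0 \iff \coker(Cl_F\to Cl_{F_n}^{\Gamma_n})=0 .
\]
Passing to $p$-parts, the last two conditions say that the natural map $A_0\to A_n^{\Gamma_n}$ is an isomorphism.

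The implications (i)$\Rightarrow$(ii) and (iii)$\Rightarrow$(iv) are trivial. For (iv)$\Rightarrow$(iii), assume $A_0=0$. The cokernel condition above is then automatic for $p$-parts, but Theorem~\ref{theorem final}~(ii) only links it to \eqref{isoM}. So I would argue directly. Since $\Gamma_n$ is a $p$-group, $A_n\neq 0$ forces $A_n^{\Gamma_n}\neq 0$. By genus theory, i.e.\ the exact sequence~\eqref{sequence genus theory} together with the surjectivity onto $\Z/|P|\Z$ established in the proof of Theorem~\ref{theorem final}~(ii), the cokernel of $Cl_F\to Cl_{F_n}^{\Gamma_n}$ is $H^2(\Gamma_n,U_{F_n})$. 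Its order is $|H^1|/p^n=|\CC_{F_n/F}|$, and this vanishes on $p$-parts because $\CC_{F_n/F}[p^\infty]\subset A_0=0$. Hence $A_n^{\Gamma_n}$ is the image of $A_0=0$, so $A_n=0$. (Alternatively, one can use the classical fact that in a $\Z_p$-extension totally ramified at a single prime, $A_0=0$ implies $A_n=0$.) Then (iii)$\Rightarrow$(i) follows: $\CC_{F_n/F}[p^\infty]\subseteq A_0=0$, which gives \eqref{isoM} at every level, and $|A_n|=1$ is bounded.

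The substantive step is (ii)$\Rightarrow$(iv). Suppose \eqref{isoM} holds for all $n\ge n_0$ and $|A_n|$ is bounded. By Lemma~\ref{remark subfield Minkowski}~(ii), \eqref{isoM} for $F_n/k$ descends to every $F_m/k$ with $m\le n$, so it holds for all $n$. Therefore $A_0\to A_n^{\Gamma_n}$ is an isomorphism for all $n$; in particular it is injective, so $|A_0|\le|A_n|$. I also want to use that the norm $A_n\to A_0$ is surjective, since $F_n/F$ is totally ramified and there is no unramified subextension. The composite $A_0\to A_n\to A_0$ is multiplication by $p^n$, and it is injective on $A_0$ for every $n$. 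Since $A_0$ is a finite $p$-group, taking $n$ large enough that $p^n$ kills $A_0$ forces $A_0=0$. Boundedness of $|A_n|$ is then not even needed for this step; it only enters, harmlessly, through (i).

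The main obstacle I expect is making sure the identification of $\CC_{F_n/F}$ and of the cokernel with the $p$-parts $A_0\to A_n^{\Gamma_n}$ is legitimate. Theorem~\ref{theorem final} is stated for full class groups, and the prime-to-$p$ parts inject automatically because $[F_n:F]$ is a power of $p$. I would also double-check the step that composes lifting with the norm to give $p^n$; this is standard.
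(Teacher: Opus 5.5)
Your reductions are sound and match the paper. Via Theorem~\ref{theorem final}~(ii), \eqref{isoM} for $F_n/k$ is equivalent to $\CC_{F_n/F}=0$, i.e.\ to $A_0\to A_n^{\Gamma_n}$ being an isomorphism. Lemma~\ref{remark subfield Minkowski}~(ii) propagates \eqref{isoM} downward, and your arguments for (iv)$\Rightarrow$(iii)$\Rightarrow$(i) are fine. The gap is in (ii)$\Rightarrow$(iv).

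The sentence ``the composite $A_0\to A_n\to A_0$ is multiplication by $p^n$, and it is injective on $A_0$ for every $n$'' is unjustified. Injectivity of the lift $j_n$ and surjectivity of the norm $N_n$ say nothing about $N_n\circ j_n$. Once $p^nA_0=0$ the composite is zero, and this is fully compatible with $j_n$ injective: it just means $j_n(A_0)=A_n^{\Gamma_n}\subseteq\ker N_n$. For instance, take $\F_p\to\F_p[\Gamma_n]$, $1\mapsto\tr_{\Gamma_n}$, followed by the augmentation.

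Worse, your claim that boundedness is not needed is false. Take $k=\Q(i)$, $F=\Q(i,\sqrt{87})$, $p=3$, and $k_\infty/k$ cyclotomic.
\begin{itemize}
\item $3\nmid h_k[F:k]$.
\item $3$ is inert in $\Q(i)$ and ramified in $\Q(\sqrt{87})$, so a unique prime of $F$ ramifies (totally) in $F_\infty$.
\item $\zeta_3\notin F_n$, so \eqref{condC} holds.
\item Since $h(\Q(\sqrt{87}))=2$ and $h(\Q(\sqrt{-87}))=6$, we get $A_0\cong\Z/3\Z$, coming from $\Q(\sqrt{-87})$. Complex conjugation $J$ acts on it by $-1$.
\item $J$ acts trivially on $\Z_3\otimes U_{F_n}=\Z_3\otimes U_{F_n^+}$, hence on $H^1(\Gamma_n,U_{F_n})$, into which $\CC_{F_n/F}$ embeds $J$-equivariantly.
\end{itemize}
Thus $\CC_{F_n/F}=0$, and \eqref{isoM} holds for every $F_n/k$ by Theorem~\ref{theorem final}~(ii), yet $A_0\neq 0$. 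Consistently with the proposition, $|A_n|$ is unbounded there. So \eqref{isoM} at all levels does not force $A_0=0$; boundedness is exactly the missing input.

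The paper supplies it through Greenberg's criterion \cite[Prop.~2]{G}: $|A_n|$ is bounded iff every $A_n$ capitulates in $F_\infty$. Then $A_0\subseteq\CC_{F_m/F}$ for some $m$, and this vanishes by \eqref{isoM} at level $m$.

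Your own idea can be repaired using the hypothesis you discarded. The norms $A_m\to A_{n_0}$ are surjective (total ramification), so if $|A_n|$ is bounded they are bijective for all $m\ge n_0$ once $n_0$ is large. Then
$N_{m,n_0}\circ j_{0,m}=p^{m-n_0}\,j_{0,n_0}$
is zero for $m\gg 0$, which forces $j_{0,m}(A_0)=0$. Injectivity of $j_{0,m}$ (i.e.\ $\CC_{F_m/F}=0$) then gives $A_0=0$.
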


\begin{proof}
The equivalence of (i) and (ii) follows from Lemma~\ref{remark subfield Minkowski}. The equivalence of (iii) and (iv) follows from the fact that $F_n/F$ is ramified at a unique prime. By \cite[Prop.~2]{G}, the orders $|A_n|$ are uniformly bounded if and only if $A_n$ capitulates in $F_{\infty}$ for each $n$. Combining this with Theorem~\ref{theo-smallramification}~(ii), we deduce that (i) and (iii) are equivalent.
\end{proof}

We work on $\Z_p$-extensions $F_{\infty}/F$ satisfying the above equivalent conditions (see Remark \ref{lambda1case} for examples outside this class).

\begin{rema}
\begin{enumerate}
\item For examples of $\Z_p$-extensions with uniformly bounded $|A_n|$, see \cite{KumaMizu} and the references therein.
\item By genus theory, the uniform boundedness of $|A_n^{\Gamma_n}|$ together with Isomorphism~\eqref{isoM} for $F_n/k$ for all $n$ forces the number of primes of $F$ ramifying in $F_\infty/F$ to be equal to one (cf. \cite[Cor.~4.8]{All}).

\item The uniform boundedness of   $|A_n^{\Gamma_n}|$ in $\Z_p$-extensions that are unramified outside a unique $p$-adic prime has been studied in  \cite{G}. When $r_k=0$, a version of the Leopoldt conjecture implies that the orders $|A_n^{\Gamma_n}|$ are uniformly bounded (cf. \cite[\S 13.5]{ Washington} and \cite{C}).
\end{enumerate}
\end{rema}

\begin{theo}[Theorem \ref{theo-Iwasawa}]
Let $F_{\infty}/F$, $k$, and $p$ be as in the beginning of this section. Assume that Condition~\eqref{condC} holds for each $F_n/F$ and  $p\nmid h_F$.
Then, we have isomorphisms
\begin{align*}
 & \varprojlim_{n \in \N} \, \EE_{F_n} \cong \Lambda[G_{F/k}]^{r_k+1} \ \text{ and } \\
 &   \varprojlim_{n \in \N} \, (\Z_p \otimes U_{F_n}) \cong \Big( \, \varprojlim_{n \in \N} \, \mu_{F_n, p} \, \Big )   \oplus\Lambda[G_{F/k}]^{r_k+1}
\end{align*}
of $\Lambda[G_{F/k}]$-modules, where the inverse limits are taken with respect to the norm maps and $\Lambda:=\Z_p[[G_{F_\infty/F}]]$ denotes the Iwasawa algebra of $G_{F_\infty/F}$.

\end{theo}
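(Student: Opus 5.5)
The plan is to first establish Isomorphism~\eqref{isoM} for every finite layer $F_n/k$, and then pass to the inverse limit using compatible bases.

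\textbf{Finite layers.} Fix $n$. Since $p\nmid h_F$, we have $A_0=0$. Proposition~\ref{prop-GVM} then gives $A_n=0$ for all $n$; this can also be seen directly, because $F_n/F$ is a $p$-extension ramified at a unique prime, which is totally ramified. Hence $\CC_{F_n/F}=0$. The group $G_{F_n/k}\cong G_{F/k}\times\Gamma_n$ is of the form required by Theorem~\ref{theo-smallramification}, and $p\nmid h_k$ holds. Part (ii) of that theorem, together with Condition~\eqref{condC}, therefore yields
\[
\EE_{F_n}\cong \Z_p[G_n]/(\tr_{G_n})\oplus\Z_p[G_n]^{r_k}, \qquad G_n:=G_{F_n/k}.
\]
By Lemma~\ref{remark subfield Minkowski}, the norm maps $N_n:\EE_{F_{n+1}}\to\EE_{F_n}$ are surjective and correspond to multiplication by $\tr_{G_{F_{n+1}/F_n}}$.

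\textbf{Passing to the limit.} The quotient $\Z_p[G_n]/(\tr_{G_n})$ is not free, so I would work with a short exact sequence
\[
0\to \EE_{F_n}\to X_n\to \Z_p\to 0, \qquad X_n\cong\Z_p[G_n]^{r_k+1},
\]
realized by $\Z_p[G_n]\to\Z_p[G_n]/(\tr_{G_n})$ on the first summand; concretely, $\EE_{F_n}$ is the kernel of the augmentation on that summand.

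The key step is to choose these isomorphisms compatibly in $n$. I plan to do this inductively. Given a basis (Minkowski-type generators) $u_1^{(n)},\dots,u_{r_k+1}^{(n)}$ realizing the structure of $\EE_{F_n}$, surjectivity of the norm lets me lift it to generators at level $n+1$. Any lift that generates $\EE_{F_{n+1}}/\mathfrak m\EE_{F_{n+1}}$ again realizes~\eqref{isoM}, by Proposition~\ref{prop-repre} and Nakayama's lemma. The lifts then give compatible isomorphisms
\[
\EE_{F_n}\cong \mathfrak{a}_n\oplus\Z_p[G_n]^{r_k},
\]
where $\mathfrak a_n$ is the augmentation-type ideal, under which the norm becomes the natural projection. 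The limit of the free summands is $\Lambda[G_{F/k}]^{r_k}$. For the remaining summand, the inverse limit of the $\Z_p[G_n]/(\tr_{G_n})$ under the transition maps induced by $\tr$ is $\Lambda[G_{F/k}]$: the trace elements tend to zero in the projective limit, so the kernel $\varprojlim \tr_{G_n}\Z_p=0$ and the $\varprojlim^1$ terms vanish because all groups are compact. This gives $\varprojlim\EE_{F_n}\cong\Lambda[G_{F/k}]^{r_k+1}$.

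I expect the compatibility of the choices to be the main obstacle, specifically checking that the transition map on the non-free summand is exactly the one induced by projection. I would handle it via the surjectivity of $\Z_p[G_{n+1}]\to\Z_p[G_n]$ and Lemma~\ref{remark subfield Minkowski}(i), which says $\EE_{F_n}=\tr\,\EE_{F_{n+1}}$.

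\textbf{Units with roots of unity.} For the second isomorphism, take inverse limits of $0\to\mu_{F_n,p}\to\Z_p\otimes U_{F_n}\to\EE_{F_n}\to0$. All terms are compact, so the sequence stays exact in the limit. The limit of the $\EE_{F_n}$ is free over $\Lambda[G_{F/k}]$, hence projective, so the limit sequence splits.
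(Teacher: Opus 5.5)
Your proposal is correct and is essentially the paper's proof: (M) at every layer via Theorem~\ref{theo-smallramification}(ii), norm-compatible lifting of generators by Nakayama's lemma, presentations $0\to\Z_p\xrightarrow{\tr_{G_n}}\Z_p[G_n]^{r_k+1}\to\EE_{F_n}\to 0$ whose left-hand terms vanish in the limit because the transition maps are multiplication by $p$, and a splitting for the full unit groups. Three points to tidy: first, this sequence, not the reversed $0\to\EE_{F_n}\to X_n\to\Z_p\to 0$ you display, is the one your limit argument actually uses; second, the induction must start at $n=1$, since at $n=0$ the reduction of the norm modulo $\mm$ is not injective and lifts need not generate; third, the lifted $y_{n+1}$ spans a copy of $\Z_p[G_{n+1}]/(\tr_{G_{n+1}})$ because $\tr_{G_{n+1}}y_{n+1}=\tr_{G_n}y_n=0$ (diagram~\eqref{dia-compatib}) together with a rank count, not by Proposition~\ref{prop-repre} alone---after which the norm is automatically the natural projection by $\Z_p[G_{n+1}]$-linearity.
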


\begin{proof}
For simplicity of notation, write $G_n:=G_{F_n/k}$ and identify $G_{F_n/k_n}$ with $H:=G_{F/k}$ for each $n \in \N$. 
By Proposition \ref{prop-GVM}, we have $$\EE_{F_n} \cong \Z_p[G_n]/(\tr_{G_n}) \oplus \Z_p[G_n]^{r_k}$$ for all $n \in \N$. 
Let $I_n$ denote the maximal ideal of $\Z_p[\Gamma_n]$. Then, for $n \geq 1$, we have $\EE_{F_n}/I_n \EE_{F_n} \cong \F_p[H]^{r_k+1}$. 
As a consequence, the map $$\overline{N}_n: \EE_{F_{n+1}}/I_{n+1}\EE_{F_{n+1}} \to \EE_{F_n}/I_n \EE_{F_n},$$ induced by the norm map $N_{F_{n+1}/F_n}$, is an isomorphism of $\mb F_p[H] $-modules, as it is surjective by Lemma~\ref{remark subfield Minkowski}.

Let $y_n \in \EE_{F_n}$ and  $\{x_{n,i}\}_{1 \leq i \leq r_k} \subset \EE_{F_n}$ be such that
\begin{equation*}
\Z_p[G_n]y_n \cong  \Z_p[G_n]/(\tr_{G_n}),\ \ \Z_p[G_n]x_{n,i} \cong \Z_p[G_n] , \ \  \EE_{F_n} = \Z_p[G_n]y_{n}  \oplus\bigg(\bigoplus_{i=1}^{r_k}\Z_p[G_n]x_{n,i}\bigg).   
\end{equation*}
Choose  arbitrary preimages $y_{n+1}$ and $x_{n+1,i}$ in $\EE_{F_{n+1}}$ of $y_n$ and $x_{n,i}$, respectively, under the norm map $N_{F_{n+1}/F_n}$. Since $\overline{N}_n$ is an isomorphism, the classes of $y_{n+1}$ and $x_{n+1,i}$ in $\EE_{F_{n+1}}/I_{n+1}\EE_{F_{n+1}}$ generate this module over $\F_p[H]$. 
As $I_{n+1}$ is contained in the Jacobson radical of $\Z_p[G_{n+1}]$, Nakayama's lemma implies that $y_{n+1}$ and $\{x_{n+1,i}\}_{1 \leq i \leq r_k}$ generate $\EE_{F_{n+1}}$ as a $\Z_p[G_{n+1}]$-module.

 By \eqref{dia-compatib}, we have $\tr_{G_{n+1}} \,y_{n+1}=0$. 
Comparing $\Z_p$-ranks, we obtain 
$$
 \Z_p[G_{n+1}]y_{n+1} \cong  \Z_p[G_{n+1}]/(\tr_{G_{n+1}}), \ \  \Z_p[G_{n+1}]x_{n+1,i} \cong \Z_p[G_{n+1}],
 $$
 $$
\hbox{and } \   \EE_{F_{n+1}}  = \Z_p[G_{n+1}]y_{n+1} \oplus\bigg(\bigoplus_{i=1}^{r_k}\Z_p[G_{n+1}]x_{n+1,i}\bigg).
 $$
 
Hence, we can   choose $y_1, x_{1,i} \in \EE_{F_1}$, and then, by induction, construct compatible systems $\{x_{n,i}\}_n$ and $\{y_n\}_n$ such that
\[
N_{F_{n+1}/F_n}(x_{n+1,i})=x_{n,i}
\quad \text{and} \quad
N_{F_{n+1}/F_n}(y_{n+1})=y_n
\]
for each $n$. 
This yields the following commutative diagram of $\Z_p[G_{n+1}]$-modules with exact rows:

\begin{equation*}
\begin{tikzcd}
0 \arrow[r] & \Z_p \arrow[r, "\alpha_{n+1}"] \arrow[d, "p"]   & {\Z_p[G_{n+1}] \oplus \Z_p[G_{n+1}]^{r_k}} \arrow[r, "\beta_{n+1}"] \arrow[d]   & \EE_{F_{n+1}} \arrow[r] \arrow[d, "N_{F_{n+1}/F_n}"] & 0 \\
0 \arrow[r] & \Z_p \arrow[r, "\alpha_{n}"]                    & {\Z_p[G_{n}] \oplus \Z_p[G_{n}]^{r_k}} \arrow[r, "\beta_{n}"]                     & \EE_{F_{n}} \arrow[r]                                & 0
\end{tikzcd}    
\end{equation*}
where $\alpha_{n}$ is defined by $\alpha_{n}(1) = (\tr_{G_{n}},0)$,  and $\beta_{n}$ is the $\Z_p[G_{n}]$-linear map sending the standard basis vectors to
\[
(1,0) \mapsto y_{n}, \  \text{ and } \ (0,e_{n,i}) \mapsto x_{n,i} \ (1 \le i \le r_k).
\]
Here, $\{e_{n,i}\}$ denotes the standard basis of $\Z_p[G_{n}]^{r_k}$.

Since all modules in the diagram are compact, the first isomorphism is obtained by taking the inverse limit over $n$,  yielding
$$
\varprojlim_n \EE_{F_n} \cong  \varprojlim_n    \Z_p[G_n]^{r_k+1} \cong \Lambda[G_{F/k}]^{r_k+1} .
$$
The second isomorphism now follows by taking the inverse limits of the exact sequences
$$
0 \longrightarrow \mu_{F_n, p}  \longrightarrow  \Z_p \otimes U_{F_n}  \longrightarrow  \EE_{F_n}  \longrightarrow   0 ,
$$
and using the freeness of $\varprojlim_n \EE_{F_n}$ as a $\Lambda[G_{F/k}]$-module.
\end{proof}

Theorem~\ref{theo-Iwasawa} applies, for instance, to the cyclotomic $\Z_p$-extension of $F$ when $p$ is odd and $F/\Q$ is a totally real Galois extension of degree prime to $p$ with a unique $p$-adic prime and $p$-class number $1$. We refer to \S \ref{sec-numexam} for numerical examples.

\vskip 10pt

Furthermore, if $F = k$ and $r_k = 0$, then $(\EE_{k_n})^{\Gamma_n} = \EE_k = 0$. 
Hence, by Theorem~\ref{theo-removingC}, one may dispense with Condition~(C) under the conditions $\mu_{k,p}=\mu_{k_n,p}$ for all $n \in \N$ and $\zeta_p \in k$, and obtain the following corollary.

\begin{coro}\label{coro-IwaModul}
Theorem~\ref{theo-Iwasawa} applies to the following cases:
\begin{enumerate}
\item The cyclotomic $\Z_2$-extension of $\Q$;
\item Any $\Z_3$-extension of $\Q(\sqrt{-3})$ not containing $\zeta_9$;
\item Any $\Z_2$-extension of an imaginary quadratic field $k$ not containing $\zeta_4$, where $2$ does not split in $k$ and $2\nmid h_k$;
\item An elliptic $\Z_2$-extension of an imaginary quadratic field $k$, where $2$ splits in $k$ and $2\nmid h_k$.
\end{enumerate}
\end{coro}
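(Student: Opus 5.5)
The plan is to reduce every case to Theorem~\ref{theo-removingC}, taking $F=k$ and $H=1$. The aim is to verify, for each layer $k_n$, all hypotheses of Theorem~\ref{theo-Iwasawa}. The only nontrivial one is Condition~\eqref{condC} for $k_n/k$. In the setting of the beginning of this section with $F=k$, the remaining requirements are: $p\nmid h_k$ (which gives $p\nmid h_F$), $p\nmid[F:k]=1$, infinite places unramified, and $k_\infty/k$ ramified at a unique prime.

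\textbf{Step 1: the elementary hypotheses, case by case.}
\begin{enumerate}
\item For $k=\Q$ and $p=2$, we have $h_\Q=1$, the place $\infty$ is unramified in the totally real cyclotomic $\Z_2$-extension, and only $2$ ramifies.
\item For $k=\Q(\sqrt{-3})$, $p=3$, we have $h_k=1$ and $k$ is complex. Since $3$ ramifies in $k$, there is a unique prime above $3$. Every $\Z_3$-extension is unramified outside $3$ and must ramify somewhere, so it ramifies exactly at that prime.
\item If $2$ does not split in $k$, there is a unique prime above $2$, and we conclude as in (ii).
\item For the elliptic $\Z_2$-extension, ramification at exactly one of the two primes above $2$ holds by construction.
\end{enumerate}
In all four cases $r_k=0$.

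\textbf{Step 2: the Galois descent hypothesis of Theorem~\ref{theo-removingC}.} Since $r_k=0$, $E_k$ is finite and torsion-free, so $\EE_k=0$. Moreover $(\EE_{k_n})^{\Gamma_n}$ is a $\Z_p$-lattice of rank $\dim_{\Q_p}(\Q_p[\Gamma_n]/(\tr))^{\Gamma_n}=0$, hence is $0$. So $(\EE_{k_n})^{\Gamma_n}=\EE_k$ holds automatically. We are then in case (b) of Theorem~\ref{theo-removingC}: there is a unique ramified prime, $p\nmid h_k$, and $\CC_{k_n/k}\subseteq Cl_k[p^\infty]=0$.

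\textbf{Step 3: the roots-of-unity hypothesis.} When $|\Gamma_n|=p$ this is case (i) of Theorem~\ref{theo-removingC}. Otherwise we need $|\mu_{k_n,p}|=p$, and I would check it case by case.
\begin{itemize}
\item In (i), $\Q_n$ is real, so $\mu_{\Q_n,2}=\{\pm1\}$.
\item In (ii), $\zeta_3\in k$ and $\zeta_9\notin k_n$, so $\mu_{k_n,3}=\mu_3$.
\item In (iii), $\zeta_4\notin k_n$ by hypothesis, so $\mu_{k_n,2}=\{\pm1\}$.
\item In (iv), I would argue that $\zeta_4\notin k_n$. Since $2$ splits, $k\neq\Q(i)$ and both completions of $k$ at $2$ are $\Q_2$. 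Hence $k(i)/k$ is ramified at both primes above $2$, which is impossible inside an extension ramified at only one of them.
\end{itemize}
Theorem~\ref{theo-removingC} then yields Condition~\eqref{condC} for every $k_n/k$, and Theorem~\ref{theo-Iwasawa} applies. Proposition~\ref{prop-GVM} is consistent with this, since $A_0=Cl_k[p^\infty]=0$.

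The main obstacle I expect is Step 3 in case (iv), together with making sure the $p=2$ subtleties in the proof of Theorem~\ref{theo-removingC} are covered. Those subtleties come from Lemma~\ref{assump-F12} and Remark~\ref{rema-modifiedgenus}; in case (b) the lemma holds for all $p$, so I would check that the argument via that lemma goes through for $p=2$ and only fall back on Remark~\ref{rema-modifiedgenus} if it does not.
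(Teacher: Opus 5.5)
Your proposal is correct and follows the paper's own argument. With $F=k$ and $r_k=0$, the descent $(\EE_{k_n})^{\Gamma_n}=\EE_k=0$ is automatic, and Theorem~\ref{theo-removingC}, case (b), then gives Condition~\eqref{condC} at every layer because $|\mu_{k_n,p}|=p$ in each listed case. Your case-by-case checks make explicit what the paper leaves implicit, notably that $\zeta_4\notin k_n$ in case (iv) because $k(\sqrt{-1})/k$ ramifies at both primes above $2$, and that Lemma~\ref{assump-F12}(ii) needs no parity assumption.
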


\begin{rema}\label{remark unit Iwasawa module history}
\begin{enumerate}
\item The Iwasawa module structure of the inverse limit of unit lattices appears frequently in the literatures on tamely ramified Iwasawa modules for $\Z_p$-extensions (cf. \cite[Lem.~2.2]{TMO13}). The approach in~\cite{TMO13} differs from ours in that the authors use a compatible system of cyclotomic units in the finite layers, each of which is itself a local Minkowski unit.

\item In~\cite[Ch.~11, \S3]{NSW}, the Iwasawa module of units is studied using the theory of modules over the Iwasawa algebra. Theorem~\ref{theo-Iwasawa} is complementary to these results in that it is obtained under a different set of assumptions and does not rely on the weak Leopoldt conjecture. Related results can also be found in~\cite{Belliard}, where the $\Lambda$-freeness of the Iwasawa module of units was established by studying properties of Iwasawa modules and using the $\Lambda$-freeness of the corresponding Iwasawa module of $S$-units. Compare also the results of Nickel \cite{NickelSwan}, where the structure of the Iwasawa module arising as the inverse limit of unit groups is studied up to pseudo-isomorphism for certain $p$-adic Lie extensions.  
\end{enumerate}    
\end{rema}

\begin{rema}\label{lambda1case}
There are several $\Z_p$-extensions of number fields for which every finite layer admits a local Minkowski unit at $p$, while the $\Z_p$-extension itself has nonzero Iwasawa $\lambda$-invariant. Let $k$ be an imaginary quadratic field and $p$ an odd prime such that $p\nmid h_k$.
Assume that $p$ splits in $k$ as $\p\p'$, and that neither $\p$ nor $\p'$ splits in the maximal $p$-ramified pro-$p$-extension $k_{S_p}(p)$ of $k$. 
Let $M$ and $M'$ denote the unique $\Z_p$-extensions of $k$ unramified outside $\p$ and $\p'$, respectively.

Let $L$ be a $\Z_p$-extension of $k$ whose first layer differs from those of $M$ and $M'$ (e.g., the cyclotomic or anticyclotomic $\mb Z_p$-extension of $k$).
By the theory of central class fields (cf. \cite[Theorem~4]{Burns2}), one checks that the $n$-th layer of $L/k$ has $p$-class number $p^n$. 
Hence, the Iwasawa invariants of $L$ satisfy $\lambda=1$ and $\mu=0$ (cf. \cite{Ozaki2}). 
Moreover, by \cite[Thm. 5]{Burns2}, each layer of $L$ admits a local Minkowski unit at $p$. As an explicit example, one may take \(k=\Q(\sqrt{-11})\) and \(p=3\).

\end{rema}

\section{Local Minkowski units with many ramified primes}\label{sec-manyram}

In this section, we study cyclic extensions~$K/k$ admitting local Minkowski units at~$p$ for which arbitrarily many primes ramify in $K/F$, where $F$ is the fixed field of a Sylow $p$-subgroup of $G_{K/k}$.

Allowing more than one prime to ramify makes genus theory substantially more difficult to apply, since the arithmetic becomes harder to control (cf.~\cite[\S4]{KumonLim}). A common approach is therefore to choose the ramified primes carefully and exploit the relationship between the Galois group and the corresponding decomposition and inertia subgroups (cf.~\cite{BLM23, BouaLim}, \cite[\S4]{Burns2}).

In particular, Burns~\cite[Thm. 5]{Burns2} used the theory of central class fields~\cite{Frohlich} to construct abelian $p$-extensions admitting local Minkowski units in which exactly two primes ramify and whose Galois groups range over all abelian $p$-groups of rank $2$. This use of central class fields makes it possible to treat non-cyclic abelian $p$-extensions with prescribed ramification, while imposing restrictions on the number of ramified primes.

By contrast, for cyclic extensions, it suffices to control a single cohomology group by Theorem~\ref{theo-main}. By carefully controlling the splitting and ramification behavior, we obtain the following theorem.

\begin{theo} [Theorem \ref{theo-D}] \label{theo-many ramification}
         Let $F$ be a totally real cyclic number field and   $p$   an odd prime number.
   Assume $p\nmid h_F\cdot [F:\Q]$
   and  $F \cap \Q(\zeta_p)=\Q$. 
   Then, for any given $r\ge 0$ and  $n\ge n_1 \ge n_2 \ge \cdots \ge n_r\ge 1  $, there exist infinitely many  cyclic extensions $L/\Q$ of degree $p^{n}$ such that:
   \begin{itemize}
   \item
      $Cl_K[p^\infty] \cong Cl_L[p^\infty] \cong \prod_{i=1}^{r}\mb Z/p^{n_i}\mb Z\,$ where  $K=LF$.
      \item
       $K$ admits a local Minkowski unit at $p$.
     \item
     $K/F$ is ramified at exactly $r+1$  primes.
           \end{itemize}
\end{theo}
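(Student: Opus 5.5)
We will build $L$ inside the ray class fields of $\Q$, i.e.\ as a degree-$p^n$ subfield of a cyclotomic field $\Q(\zeta_{\ell_0\ell_1\cdots\ell_r})$ with auxiliary primes $\ell_0,\dots,\ell_r$ chosen by Chebotarev. Then $K=LF$ has $G_{K/\Q}\cong G_{L/\Q}\times G_{F/\Q}$, with $P=G_{K/F}\cong\Z/p^n\Z$ and $H=G_{F/\Q}$ of order prime to $p$. Since $F$ and $L$ will be totally real, no infinite place ramifies. Since $F\cap\Q(\zeta_p)=\Q$ and $L$ is unramified at $p$, we get $\zeta_p\notin K$, so $\mu_{K,p}=1$ and Condition~\eqref{condC} holds. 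By Theorem~\ref{theo-main} it then suffices to show $H^1(P,U_K)\cong\Z/p^n\Z$, and we will compute this with the genus sequence~\eqref{sequence genus theory} for $K/F$.

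The primes are chosen as follows. Take $\ell_0\equiv 1\pmod{p^n}$, which will be totally ramified of index $p^n$. Take $\ell_i\equiv 1\pmod{p^{n_i}}$ for $1\le i\le r$, with $\ell_i\not\equiv 1\pmod{p^{n_i+1}}$. Let $L$ be the cyclic degree-$p^n$ subfield of $\Q(\zeta_{\ell_0\cdots\ell_r})$ cut out by a character $\chi=\chi_0\chi_1\cdots\chi_r$, where $\chi_i$ has exact order $p^{n_i}$ on $(\Z/\ell_i)^\times$. The inertia group at $\ell_i$ then has order $p^{n_i}$. We also impose that each $\ell_i$ splits completely in $F$, so that $K/F$ is ramified at exactly $r+1$ primes only if this splitting is arranged. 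To get exactly $r+1$ ramified primes of $F$, we instead require each $\ell_i$ to be \emph{inert} in $F$, using that $F$ is cyclic. Finally, we impose Chebotarev conditions on the mutual residue symbols: $\ell_0$ should be totally split in the $p$-parts of the fields cut out by the $\chi_i$ for $i\ge1$, and vice versa appropriately. These conditions control genus theory.

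The key computations proceed in the following order.

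First, the genus formula for $L/\Q$, with $Cl_\Q=1$, gives $|Cl_L^{P}[p^\infty]|=\prod_i e_i/[\,\cdots]=\prod_{i\ge1}p^{n_i}$, since $\ell_0$ contributes a full generator. Using the splitting conditions, we will argue that $Cl_L[p^\infty]$ equals its genus part, isomorphic to $\prod_{i\ge1}\Z/p^{n_i}\Z$. The idea is that the ambiguous classes generated by the ramified primes $\mathfrak L_i$ have the stated orders, and the second-step (central class field) obstruction vanishes by the residue-symbol choice, in the style of \cite{Frohlich}, \cite[Thm.~5]{Burns2}.

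Second, since $p\nmid h_F[F:\Q]$, Galois descent along $H$ gives $Cl_K[p^\infty]^H\cong Cl_L[p^\infty]$ via norm and lift. The $\chi$-parts for nontrivial characters of $H$ vanish: $Cl_K[p^\infty]^P$ is computed by genus theory over $F$, and inertness of the $\ell_i$ makes the ambiguous classes $H$-fixed, so Nakayama kills the nontrivial isotypic parts. Hence $Cl_K[p^\infty]\cong Cl_L[p^\infty]$.

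Third, in~\eqref{sequence genus theory}, $I_K^P/I_F\cong\Z/p^n\oplus\bigoplus_{i\ge1}\Z/p^{n_i}$. The map $Cl_F\to Cl_K^P$ is zero on $p$-parts, so the cokernel equals $Cl_K^P[p^\infty]$, of order $\prod p^{n_i}$. We have $\CC_{K/F}[p^\infty]=0$, and $H^1(P,P_K)\cong H^2(P,U_K)$ because $H^2(P,U_K)$ has order $|H^1|/p^n$ by Proposition~\ref{unit principal genus theorem}. Counting orders forces $|H^1(P,U_K)|=p^n$, and the image of $H^1$ is the kernel of $I_K^P/I_F\to Cl_K^P$. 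This kernel is generated by a principal ideal $\mathfrak L_0^{t}\prod\mathfrak L_i^{a_i}$ whose $\mathfrak L_0$-exponent is a unit. It is therefore cyclic of order $p^n$, so $H^1(P,U_K)\cong\Z/p^n\Z$.

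The main obstacle is the first step: pinning down $Cl_L[p^\infty]$ exactly, rather than only its $P$-invariants, and showing that $\ker(I_K^P/I_F\to Cl_K^P)$ is cyclic. I would try to handle this by choosing the $\ell_i$ ($i\ge1$) to split completely in $\Q(\zeta_{\ell_0})^{(p^n)}$ and $\ell_0$ to be a $p^{n}$-th power residue modulo each $\ell_i$. This makes the ambiguous classes of the $\mathfrak L_i$ independent of exact orders $p^{n_i}$ and makes $\mathfrak L_0$ essentially principal, which is Rédei-matrix bookkeeping. Chebotarev applied to these conditions yields infinitely many $L$.
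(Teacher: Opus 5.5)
\textbf{The mutual residue conditions point the wrong way, and with them the first bullet fails.} Let $M_0\subset\Q(\zeta_{\ell_0})$ and $M_i\subset\Q(\zeta_{\ell_i})$ be the subfields of degree $p^n$ and $p^{n_i}$. Let $M=M_0M_1\cdots M_r$ be the genus field of $L$. With $\sigma$ a generator of $G_{L/\Q}$, the Artin map gives $Cl_L[p^\infty]/(\sigma-1)Cl_L[p^\infty]\cong G_{M/L}$, of order $\prod_{i\ge1}p^{n_i}$.

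You require every $\ell_i$ ($i\ge1$) to split completely in $M_0$ and $\ell_0$ to split completely in every $M_i$. Then the decomposition group of $\ell_0$ in $G_{M/\Q}$ meets $G_{M/L}$ trivially. The decomposition group of each $\ell_i$ restricts trivially to $M_0$. Hence every prime of $L$ above $\ell_0,\dots,\ell_r$ splits completely in $LM_0/L$, which is unramified of degree $p^{n_1}>1$ (for $r\ge1$).

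Your own count shows $|Cl_L^P[p^\infty]|=\prod_{i\ge1}p^{n_i}=[M:L]$. The strongly ambiguous classes already have this order, since $N_{L/\Q}(-1)=-1$. So $Cl_L^P[p^\infty]$ is generated by these primes, and its Artin image lies in $G_{M/LM_0}\subsetneq G_{M/L}$. Therefore $Cl_L^P[p^\infty]\cap(\sigma-1)Cl_L[p^\infty]\neq0$, and $|Cl_L[p^\infty]|>\prod_{i\ge1}p^{n_i}$ for \emph{every} $L$ your recipe produces. For $p=3$, $r=1$, $n=n_1=1$, this is the familiar fact that $9\mid h_L$ when the two ramified primes are cubic residues of each other.

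The paper's choice is asymmetric. Take your $\ell_0$ as its $q_1$. Each new prime $q_i$ is chosen so that all earlier $q_j$ are $p^{b_i}$-th power residues mod $q_i$, but $q_i$ is itself \emph{inert} in $FL_{i-1}$; for $i=2$ this means inert, not split, in your $M_0$. This gives $[q_{1,L},M/L]=1$, while the $[q_{i,L},M/L]$ for $i\ge2$ have exact orders $p^{b_i}$ and generate $G_{M/L}\cong\prod_i\Z/p^{n_i}\Z$ as a direct product. You also impose no conditions among $\ell_1,\dots,\ell_r$, and some are needed for that independence.

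\textbf{Independently, Step 3 does not determine $|H^1(P,U_K)|$.} Grant $|Cl_K^P[p^\infty]|=\prod p^{n_i}$ and $H^1(P,P_K)\cong H^2(P,U_K)$. Then the $p$-part of \eqref{sequence genus theory} and Proposition~\ref{unit principal genus theorem} both reduce to the same identity $|H^1(P,U_K)|=p^n\,|H^2(P,U_K)|$. Nothing forces $H^2(P,U_K)=0$, and the principal ideal $\mathfrak L_0^t\prod\mathfrak L_i^{a_i}$ is asserted, not derived.

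What is missing is an \emph{upper} bound $|\ker\Omega|\le p^n$ for $\Omega:(I_K^P/P_F)[p^\infty]\to Cl_K^P[p^\infty]$, whose kernel is $H^1(P,U_K)$. The paper composes $\Omega$ with $\bar N_{K/L}$, the Artin map of $L$, and the projection to $G_{M/L}$. With its choice of primes, this composite $\Theta$ is surjective with kernel $\Z/p^n\Z$ generated by the class of $q_{1,K}$. Since $\ker\Omega\subseteq\ker\Theta$ and $|H^1(P,U_K)|\ge p^n$, one gets $H^1(P,U_K)=\ker\Theta\cong\Z/p^n\Z$.

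Then $H^2(P,U_K)=0$, so $H^1(P,P_K)=0$, $\Omega$ is onto, and $Cl_K^P[p^\infty]\cong G_{M/L}$. The kernel of $Cl_K[p^\infty]\to G_{M/L}$ is a $P$-stable $p$-group with no nonzero $P$-fixed points, so it vanishes. This yields $Cl_K[p^\infty]\cong Cl_L[p^\infty]\cong\prod\Z/p^{n_i}\Z$.

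So the class group is an output of the argument, not an input to be fixed beforehand by R\'edei-type bookkeeping. Your Step 2 has the same issue: it presupposes $H^1(P,P_K)=0$ when it treats all of $Cl_K^P[p^\infty]$ as generated by ramified primes.
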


\begin{proof}
 
 For convenience, we write $n=b_1, n_1=b_2, \dots, n_r=b_{r+1}$ and $s=r+1$.
Inductively, using the Chebotarev density theorem, we  find rational primes $q_i $, cyclic extensions $M_i/\mb Q$  of degree $p^{b_i}$ and $L_i/ \mb Q$  of degree $p^n$ for $1\le i\le s$, as follows:
\begin{itemize}
 \item 
  First, choose $q_1$ that splits  completely in $\Q(\zeta_{p^{b_{1}}})$ and is  inert in $F$.
  Note that $\Q(\zeta_{p^{b_{1}}}   ) \cap F =\mb Q$.
Let $M_1/\mb Q$ be the unique subextension of $\Q(\zeta_{q_1})/\Q$ of degree $p^{b_1}$ and put $L_1:=M_1$. 
  Then, by Kummer theory, we have $\Q(\zeta_{p^{a}},   \sqrt[p^{a}]{q_1}) \cap FL_{1}=\mb Q$ for any $a$.
   \item 
   Now, suppose inductively that $\{q_1, \dots, q_{i-1}\}$, $\{M_1, \dots, M_{i-1}\}$, and $\{L_1, \dots, L_{i-1}\}$ have been constructed for some $1< i   \le s$, and satisfy $\Q(\zeta_{p^a},   \sqrt[p^{a}]{q_1},  \dots,  \sqrt[p^{a}]{q_{j-1}} ) \cap FL_{j-1}=\mb Q$ for any $a$ and every $1<j \le i$.
 Choose $q_i$ that  splits completely  in $\Q(\zeta_{p^{b_{i}}},   \sqrt[p^{b_i}]{q_1}, \dots,    \sqrt[p^{b_i}]{q_{i-1}})$ and is  inert in $ FL_{i-1}$. 
 Let $M_i/\Q$ denote the unique subextension of $\Q(\zeta_{q_i})/\Q$ of degree $p^{b_i}$. We now choose a cyclic subextension $L_i/\mb Q$ of $L_{i-1}M_i/\mb Q$  such that 
 \[
[L_i:\Q]=p^n, \quad L_i\cap M_i=\Q,
\quad [L_i\cap L_{i-1}:\Q]=p^{n-b_i}.
\]
Then $L_i$ still satisfies
\[
\Q(\zeta_{p^a},  \sqrt[p^{a}]{q_1}, \dots,   \sqrt[p^{a}]{q_i}) \cap FL_i=\Q
\]
for every~$a$.
 \end{itemize} 

From the construction above, we obtain the following properties.
\begin{enumerate}
 \item 
 For $1< i \le s$ and $1\le j \le i-1$,  the prime $q_j$ is  a $p^{b_i}$-th power modulo $q_i$. Hence, $q_j$ splits completely in $M_i$ by class field theory.  
  \item 
   We have  $L_{i}M_{i+1}=L_{i+1}M_{i+1}=L_iL_{i+1}$ for $1\le i\le r$,  and hence $L_iM_2M_3\cdots M_i=M_1M_2\cdots M_i $ for $1\le i \le s$.
  This results in 
 $$
 G_{M/L} \cong  \prod_{i=1}^{r}\mb Z/{p^{n_i}}\mb Z = \prod_{i=2}^{s}\mb Z/{p^{b_i}}\mb Z,
 $$
 where we put $L:=L_s$ and $M:=M_1\cdots M_s$. 
  
     \item 
  The primes $q_1, \dots, q_{i}$ do  not split in $L_i$. In particular $q_1, \dots, q_{s}$ do not split in $K=LF$. Moreover, for $1\le j\le i$, the ramification index and inertia degree of~$q_j$ in~$L_i$ are $p^{b_j}$ and $p^{n-b_j}$, respectively. For each subfield $N$ of~$MF$ such that $q_i$ does not split in~$N$, we denote by~$q_{i,N}$ the unique prime of~$N$ lying above~$q_i$.
   \item
   The extension $M/L$ is unramified, i.e.,  the $p$-Hilbert class field $H_L$ of $L$ contains $M$. Let $[q_{i, L}, M/L] $ be the Frobenius automorphism at $q_{i, L}$ in $G_{M/L}$. Then $[q_{1, L}, M/L]=0 $ and   $[q_{i, L}, M/L]$ is of order $p^{b_i}$ for $2\le i \le s$.
    
\end{enumerate}
\vskip 5pt
 
We claim that  $\{[q_{2, L}, M/L], \dots,   [q_{s, L}, M/L] \}$ generate $G_{M/L} $.
The case  $s=1$ is trivial since $M=L$ in this case.
For the case  $s> 1$, we note that $q_{j,L_{s-1}}$  splits completely in $L_{s-1}L/L_{s-1}$ for $1\le j\le s-1$   and that $q_{j,L }$  splits completely in $L_{s-1}L/L$ by (i)  and (iii).
Hence, from 
$$G_{M/L_{s-1}L} \cong G_{M_1\cdots M_{s-1}/L_{s-1}}$$
and induction hypothesis, it follows that $\{[q_{i, L}, M/L]\}_{2\le i \le s-1} $ generate $G_{M/L_{s-1}L} $.
Furthermore,    $q_{s,L }$ does not split and is unramified in $M_sL/L $, i.e.,   $[q_{s, L}, M_sL/L]$ generates $G_{M_sL/L}$.
Recalling  that  $M_sL=L_{s-1}L$  by (ii), we conclude that $\{[q_{i, L}, M/L]\}_{2\le i \le s} $ generate $G_{M/L} $.
Note that  $G_{M/L}=[q_{2, L}, M/L]^{\mb Z}  \cdots     [q_{s, L}, M/L]^{\mb Z}$ is a direct product by (iv).

\vskip 5pt

    Therefore, under the natural surjection
$
\phi:G_{H_L/L}\longrightarrow  G_{M/L} ,
$
we have
\begin{equation}\label{equation final}
\phi([q_{1, L}, H_L/L]) =0 \ \hbox{ and }\ \phi({X})= G_{M/L},
    \end{equation}
     where ${X}$ denotes the subgroup of $G_{H_L/L} $ generated by    $\{[{q}_{i,L}, H_L/L] \}_{2\le i \le s}$.

Let us denote $G_{K/F}\cong G_{L/\mb Q}$ by $P$. 
By  Lemma~\ref{lemma Iwasawa theorem} and Theorem \ref{theo-main}, the field $K$ has a local Minkowski unit at $p$ if and only if the kernel of the canonical map 
$$\Omega : (I^P_K/P_F )[p^\infty]\longrightarrow Cl_K^P[p^\infty]$$
is isomorphic to $\Z/p^n$. 

Consider the map $\Theta: (I^P_K/P_F )[p^\infty]\rightarrow G_{M/L}$, defined to be 
\begin{equation*}
\begin{tikzcd}
  \Theta: (I^P_K/P_F )[p^\infty] \arrow[r, "\Omega"]    & Cl_K^P[p^\infty] \arrow[r, "\bar{N}_{K/L}"]    & Cl_L^P[p^\infty] \arrow[d,   "\cong" sloped, "\psi" swap] &  \\
            &      &   (G_{H_L/L})^P\arrow[r, "\phi"]    &  (G_{M/L})^P = G_{M/L},
\end{tikzcd}    
\end{equation*}
where $\psi$ is the Artin map.
 We have  $(I^P_{K}/P_F) [p^\infty]\cong (I^P_K/I_F)[p^\infty] \cong \prod_{i=1}^{s}\mb Z/{p^{b_i}}\mb Z$ since $p\nmid h_F$.
Therefore, by (iii), $(I^P_{K}/P_F) [p^\infty]$ is generated by the classes of $q_{i,K}$ for $1 \leq i \leq r+1$.
From $N_{K/L} q_{i,K}= q^{[F:\Q]}_{i,L}$ and  \eqref{equation final}, it follows that $\Theta$ is surjective and $\ker \Theta \cong \mb Z/p^n \mb Z$.
Thus we get  $|\ker \Omega|\le  |\ker \Theta| =p^n$.

On the other hand,  we   have $|\ker \Omega | =|H^1(P, U_K)| \ge p^n$  by  Proposition~\ref{unit principal genus theorem}.
In particular, we obtain  $\ker \Omega =\ker \Theta \cong \Z/p^n$.
 Hence  $K$ admits a local Minkowski unit at $p$.

By  Proposition~\ref{unit principal genus theorem} again, we have $H^2(P, U_K)=0$.
Since the exact sequence  $1\rightarrow U_K \rightarrow K^\times \rightarrow P_K \rightarrow 1$ induces an injection $H^1(P, P_K)\hookrightarrow H^2(P, U_K)$, we get $\coker \ \Omega \cong  H^1(P, P_K)=0$ by \eqref{sequence genus theory}.
Hence it follows that  $\phi \mathbin{\scalebox{0.6}{$\circ$}}  \psi \mathbin{\scalebox{0.6}{$\circ$}} \bar{N}_{K/L}:Cl_K^P[p^\infty]\rightarrow G_{M/L}$ is an isomorphism.
Since all related groups are $p$-groups, we conclude that  $Cl_K[p^\infty]\cong Cl_L[p^\infty] \cong G_{M/L} $.
\end{proof}

\begin{rema}
In Theorem~\ref{theo-many ramification}, we can replace $\mb Q$ by an arbitrary number field $k$ under similar assumptions. More precisely, we assume that $F/k$ is cyclic and unramified at all infinite places, $\zeta_p\notin k$, $F\cap k(\zeta_p)=k$, and $p\nmid h_F\cdot[F:k]$. Then a similar proof shows that   there exist infinitely many cyclic extensions $L/k$ of degree $p^{n}$ such that: 
   \begin{itemize}
   \item
      $Cl_K[p^\infty] \cong Cl_L[p^\infty] \cong \prod_{i=1}^{r}\Z/p^{n_i}\Z$, where  $K=LF$.
      \item
       $K/k$ satisfies Isomorphism~\eqref{isoM}.
     \item
     $K/F$ is ramified at exactly $r+1$  primes.
           \end{itemize}            
To adapt the proof, we replace $q_i$, $\mb Q(\zeta_{q_i})$ and  $\Q(\zeta_{p^{b_{i}}},   \sqrt[p^{b_i}]{q_1}  \dots,    \sqrt[p^{b_i}]{q_{i-1}} )$ with $$\mf q_i,\  k(\mf q_i), \hbox{ and }\  k(\zeta_{p^{b_i}},    \sqrt[p^{b_i}]{U_k},    \sqrt[p^{b_i}]{x_1},  \dots,    \sqrt[p^{b_i}]{x_{i-1}} ),$$
respectively.
Here,  $\mf q_j$ is a prime ideal of $k$, $k(\mf q_j)$ denotes the ray class field of $k$ modulo $\mf q_j$, and  $x_j$ is a generator of $\mf q_j^{h_k}$.
\end{rema}

\begin{rema}
We are informed that Yahagi~\cite{Yahagi} constructed infinitely many cyclic extensions with prescribed $p$-class groups. 
However, his proof does not guarantee the existence of local Minkowski units in general.
\end{rema}

\begin{rema}\label{remark - ThmD-nonabelian}
Let $F=\Q$ and choose primes $q_1,\dots, q_s$ and a cyclic extension $L/\Q$ as in the proof of Theorem~\ref{theo-many ramification}.
After that, we choose a finite totally real Galois extension $F_0/\Q$ such that:
\begin{itemize}
    \item $p\nmid h_{F_0}\cdot [F_0:\Q]$.
    \item 
$q_1, \dots, q_s$ do not split in $F_0$.
   \end{itemize} 
Then a similar argument shows that $K=LF_0$ has a local Minkowski unit at $p$.   
\end{rema}

\begin{rema}
We remark that the $\Z[G]$-module structure of the unit groups of real biquadratic fields with many ramified primes was studied in \cite[Thm.~6]{MazurUllom} by exploiting explicit descriptions of systems of fundamental units. However, this explicit approach already becomes ineffective for bicubic fields.
\end{rema}

\section{Numerical examples}\label{sec-numexam}

We now present several examples of non-abelian extensions of number fields for which Isomorphism \eqref{isoM} holds.
All numerical computations were carried out using PARI/GP~\cite{PARI2}. We retain the notation $F/k$ and $K$ from Theorem~\ref{theo-smallramification}.

\subsection{Non-abelian examples with $K/F$ unramified}

The following group-theoretic proposition suggests the existence of many Galois extensions $K/k$ with $G_{K/k} \cong H \times \Z/p^n\Z$ admitting a local Minkowski unit at $p$ for various non-abelian groups $H$.

\begin{prop}
Let $k$ be a number field and $p$ a prime number. 
Let $F/k$ be a Galois extension of degree prime to $p$ such that $Cl_F[p^\infty]$ is cyclic.
Assume moreover that $\zeta_p \notin F$ and that $F/k$ is unramified at the infinite places. Let $K$ be the compositum of $F$ and the Hilbert $p$-class field of $k$. Then Isomorphism~\eqref{isoM} holds for $K/k$.
\end{prop}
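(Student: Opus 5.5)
The plan is to reduce to Theorem~\ref{theo-smallramification}~(i). Let $k^{(p)}$ denote the Hilbert $p$-class field of $k$, and put $P:=G_{k^{(p)}/k}\cong Cl_k[p^\infty]$ and $H:=G_{F/k}$.

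\emph{Step 1: the group is $H\times P$ with $P$ cyclic.} Since $[F:k]$ is prime to $p$ and $[k^{(p)}:k]$ is a power of $p$, we have $F\cap k^{(p)}=k$. Hence $G_{K/k}\cong H\times P$, and $p\nmid |H|$. Next, the composite of the lifting map $Cl_k\to Cl_F$ followed by the norm $Cl_F\to Cl_k$ is multiplication by $[F:k]$. On $Cl_k[p^\infty]$ this is an automorphism, so $Cl_k[p^\infty]$ embeds into $Cl_F[p^\infty]$. A subgroup of a cyclic group is cyclic, so $P$ is cyclic. Moreover $K/F$ is unramified at all places, being a base change of the unramified extension $k^{(p)}/k$. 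The infinite places of $k$ do not ramify in $K$, since they do not ramify in $F$ by assumption nor in $k^{(p)}$.

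\emph{Step 2: Condition \eqref{condC}.} Suppose $\zeta_p\in K$. Then $[F(\zeta_p):F]$ divides $p-1$ and also divides $[K:F]$, which is a power of $p$. Hence $\zeta_p\in F$, contrary to hypothesis. So $\mu_{K,p}=1$, and Lemma~\ref{lemma tensor cohomology} together with the exact sequence $0\to\mu_K\to U_K\to E_K\to 0$ gives $H^1(P,U_K)\cong H^1(P,E_K)$, which is \eqref{condC}.

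\emph{Step 3: the capitulation kernel.} By Theorem~\ref{theo-smallramification}~(i) it remains to show $\CC_{K/F}\cong \Z/|P|\Z$. In the unramified case, \eqref{equation unramified kernel cokernel} gives $\CC_{K/F}\cong H^1(P,U_K)$. Proposition~\ref{unit principal genus theorem} then yields
\[
|\CC_{K/F}| = |P|\cdot |H^2(P,U_K)| \ge |P|.
\]
On the other hand, $H^1(P,U_K)$ is annihilated by $|P|$. Therefore $\CC_{K/F}$ lies in $Cl_F[p^\infty]$ and has exponent dividing $|P|$. Since $Cl_F[p^\infty]$ is cyclic, $\CC_{K/F}$ is cyclic with order at least $|P|$ and exponent dividing $|P|$. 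This forces $\CC_{K/F}\cong\Z/|P|\Z$, and Theorem~\ref{theo-smallramification}~(i) gives Isomorphism~\eqref{isoM}.

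The only step that needs real care is Step~1, namely that cyclicity of $Cl_F[p^\infty]$ passes down to $Cl_k[p^\infty]$. This is handled by the injectivity of the lift on $p$-parts, which holds because $p\nmid [F:k]$. The remaining steps are direct applications of the earlier results.
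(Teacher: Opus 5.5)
Your proof is correct, but your Step 3 takes a different route from the paper.

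The paper argues through class field theory. Since $G_{K/F}$ is a quotient of the cyclic group $G_{H_F/F}$, where $H_F$ is the Hilbert $p$-class field of $F$, and a $p$-group with cyclic abelianization is cyclic, $F$ and $K$ have the same Hilbert $p$-class field $H_F$. The paper then picks a prime $\q$ of $F$ inert in $H_F$. Its class generates $Cl_F[p^\infty]$, and the prime of $K$ above it generates $Cl_K[p^\infty]$. So the lift $Cl_F[p^\infty]\to Cl_K[p^\infty]$ is surjective, and its kernel has order $[H_F:F]/[H_F:K]=[K:F]$.

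You instead combine $\CC_{K/F}\cong H^1(P,U_K)$ with the lower bound $|H^1(P,U_K)|\ge |P|$ from Proposition~\ref{unit principal genus theorem}. You then use that this group is annihilated by $|P|$ and that $Cl_F[p^\infty]$ is cyclic. This is the same mechanism the paper uses in Lemma~\ref{assump-F12}(i).

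Your argument is shorter. It needs no information about the class group of $K$ and no choice of an inert prime. The paper's argument gives more: $H_K=H_F$, $Cl_K[p^\infty]$ is cyclic of order $|Cl_F[p^\infty]|/[K:F]$, and the lift is surjective onto all of $Cl_K[p^\infty]$, not just onto $Cl_K^P$.

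You also spell out hypotheses the paper leaves implicit when it invokes Theorem~\ref{theo-smallramification}~(i):
\begin{itemize}
\item $P\cong Cl_k[p^\infty]$ is cyclic, which follows from the injectivity of the lift on $p$-parts.
\item $\zeta_p\notin F$ forces $\mu_{K,p}=1$, and hence Condition~\eqref{condC} holds.
\item No infinite place of $k$ ramifies in $K$.
\end{itemize}
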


\begin{proof}
By the cyclicity of  $Cl_F[p^\infty]$, $F$ and $K$ have the same $p$-Hilbert class field $H_F$. Let $\q$ be a prime of $F$ that is inert in $H_F$. Then $Cl_F[p^{\infty}]$ is generated by the class of $\q$, and $Cl_K[p^{\infty}]$ by the class of the prime of $K$ above $\q$. Hence, the natural map $Cl_F[p^{\infty}] \to Cl_K[p^{\infty}]$ is surjective, and $\CC_{K/F} \cong \Z/[K:F]\Z$. We therefore conclude from Theorem~\ref{theo-smallramification} (i) that the claim holds.
\end{proof}

\begin{exam}
Let $L$ be the $S_3$-number field defined by $x^6-6x^4+9x^2+104$. There are $73$ imaginary quadratic fields $\Q(\sqrt{-d})$ with $d\leq 650$ such that both $\Q(\sqrt{-d})$ and its compositum with $L$ have non-trivial cyclic $5$-class groups. These yield examples of $S_3\times\Z/5^s\Z$-extensions admitting a local Minkowski unit at $5$ for some $s\geq 1$.
\end{exam}

\subsection{Local Minkowski units with Galois group $S_3 \times \Z/p^n\Z$ in the ramified case}

We provide examples for Theorem~\ref{theo-smallramification} with $H=S_3$. We first construct an $S_3$-extension as follows. Suppose that $q_1, \ldots, q_s$ are pairwise distinct primes satisfying $q_i \equiv 2 \pmod{3}$, and consider a polynomial
\begin{equation*}
 f(X)=X^3-q_1\cdots q_s aX+q_1\cdots q_s \in \Z[X]
\end{equation*}
for some integer $a>1$. Since $f(X)$ is Eisenstein at each $q_i$, for any root $\alpha$ of $f(X)$, the extension $\Q_{q_i}(\alpha)/\Q_{q_i}$ is totally ramified of degree $3$. Since $\zeta_3 \notin \Q_{q_i}$, the extension $\Q_{q_i}(\alpha)/\Q_{q_i}$ is not Galois. Hence, the Galois group of the splitting field of $f(X)$ over $\Q_{q_i}$ is isomorphic to $S_3$. Moreover, since
\begin{equation*}
f(0)=q_1\cdots q_s>0
\quad\text{and}\quad
f(1)=1-q_1\cdots q_s a+q_1\cdots q_s<0,    
\end{equation*}
 $f(X)$ has three real roots. Therefore, the splitting field $F$ of $f(X)$ over $\Q$ is totally real. In particular, we have $G_{F/\Q}\cong S_3$, and each $q_i$ has a unique prime above it in $F$. If there exists a cyclic extension $L/\Q$ of $p$-power degree such that $L$ and the primes $q_i$ satisfy the assumptions of Theorem~\ref{theo-many ramification}, and if $p\nmid h_F$, then the compositum $FL$ provides an example as in Remark~\ref{remark - ThmD-nonabelian}. We also give the following numerical example.

\begin{exam}[Examples for Theorem \ref{theo-smallramification} and Theorem \ref{theo-Iwasawa}]
If we take $s=1$ and $a=2$, then PARI/GP computations show that the $p$-class group of the $S_3$-extension defined by $X^3-2pX+p$ is trivial for all odd primes $p<1000$ with $p \equiv 2\pmod{3}$. Hence, by taking the compositum of each such extension with the cyclotomic $\Z_{p}$-extension of $\Q$, we obtain number fields with Galois group $S_3\times \Z/p^n\Z$ admitting a local Minkowski unit at $p$ for all $n\in\N$.
\end{exam}

\subsection{Local Minkowski units in $D_{2q} \times \Z/p^n\Z$-extensions with large $q$}

For a number field $L$, the governing field $\mathrm{Gov}(L)$ for $p$ is defined by 
$$\mathrm{Gov}(L):=L(\zeta_p, \sqrt[p]{V_{\emptyset}(L)}),$$
where $V_{\emptyset}(L)$ denotes the multiplicative group of elements $\alpha \in L^{\times}$ such that the principal ideal $(\alpha)$ is the $p$-th power of a fractional ideal of $L$. If $p\nmid h_L$, then $\mathrm{Gov}(L)=L(\zeta_p, \sqrt[p]{U_L})$ (cf. \cite{GM-PMB, HMR24-GM}). The governing field $\mathrm{Gov}(L)$ provides useful criteria for the existence of cyclic extensions of $L$ of degree $p$ that are ramified at a prescribed set of non-$p$-adic primes.

\begin{exam}\label{exam-GM}
Let $q$ be an odd prime, and let $k \neq \Q(\sqrt{q})$ be a real quadratic field with $q\nmid h_k$.
Let $\mathrm{Gov}(k)=k(\zeta_q, \sqrt[q]{U_k})$ be the governing field of $k$ for $q$.
By the Schur-Zassenhaus theorem, we have
\begin{equation*}
    G_{\mathrm{Gov}(k)/\Q} \cong G_{\mathrm{Gov}(k)/k(\zeta_q)} \rtimes \big (G_{k/\Q} \times G_{\Q(\zeta_q)/\Q} \big ).
\end{equation*}
By the Chebotarev density theorem, there exists a prime $\ell$ such that
\begin{equation*}
       [\ell,  \mathrm{Gov}(k)/\Q ] = (id_{ \mathrm{Gov}(k)}, (\tau_1, \tau _{2})) \in G_{\mathrm{Gov}(k)/k(\zeta_q)} \rtimes \big ( G_{k/\Q} \times G_{\Q(\zeta_q)/\Q} \big ).
\end{equation*} 
Here, $\tau_1$ and $\tau_2$ denote elements of order $2$ in  $G_{k/\Q}$ and $G_{\Q(\zeta_q)/\Q}$, respectively.
More precisely,  we fix a prime $\mf L $ of $\mathrm{Gov}(k)$ above $\ell$ such that its Frobenius automorphism $Fr_{\mf L}$ is $ (id_{ \mathrm{Gov}(k)}, (\tau_1, \tau _{2}))$, and we choose $[\ell,  \mathrm{Gov}(k)/\Q ]$ to be $Fr_{\mf L}$ although $ [\ell,  \mathrm{Gov}(k)/\Q ]$ is defined only up to conjugation.
From
$[\ell, k(\zeta_q)/\Q]=(\tau_1,\tau_2)$ we have $\ell \equiv -1 \pmod{q}$ and $\ell$ does not split in $k$. Let $\mathfrak{l}$ be the prime of $k(\zeta_q)$ below $\mf L$. Then we have
\begin{equation*}
     [\mathfrak{l},  \mathrm{Gov}(k)/k(\zeta_q) ] = [\ell,  \mathrm{Gov}(k)/\Q]^2 = (id_{ \mathrm{Gov}(k)}, (\tau_1, \tau_2))^2 =1. 
\end{equation*}
Hence, by a theorem of Gras and Munnier (cf. \cite{GM-PMB, HMR24-GM}), there exists a cyclic extension $k(\mathfrak{l})'/k$ of degree $q$ that is precisely ramified at the unique prime of $k$ above $\ell$. In particular, $\ell$ does not split in $k(\mathfrak{l})'$. The group $G_{k(\mathfrak{l})'/\Q}$ is non-abelian since $\ell \equiv -1 \pmod{q}$. Hence, we have $G_{k(\mathfrak{l})'/\Q} \cong D_{2q}$.

For example, take $k=\Q(\sqrt{3})$, $q=7$, and $\ell=1049$. Then $k$ admits a $\Z/7\Z$-extension $k'$ ramified at the unique $1049$-adic prime. A numerical computation shows that $k'$ has class number $1$. Therefore, for every $n\geq 1$, the compositum of $k'$ with the $n$th layer of the cyclotomic $\Z_p$-extension of $\Q$, for $p=1049$, yields a $D_{14}\times\Z/1049^n\Z$-extension admitting a local Minkowski unit at $1049$. 
\end{exam}

\bibliographystyle{plain}
\bibliography{references}

\end{document}